\documentclass[11pt,a4paper]{amsart}
\usepackage{amsfonts}
\usepackage{amsmath}
\usepackage{amssymb, esint}
\usepackage{amsthm,color}
\usepackage{enumerate}
\usepackage{mathtools}
\usepackage{geometry}
\usepackage{graphicx}
\usepackage{mathrsfs}
\usepackage{hyperref}
\hypersetup{colorlinks=true, linkcolor=blue, citecolor=blue, urlcolor=blue}

\theoremstyle{plain}
\newtheorem{theorem}{Theorem}

\newtheorem{lemma}[theorem]{Lemma}

\theoremstyle{definition}
\newtheorem{remark}[theorem]{Remark}

\title[Complex sine-Gordon]{Symmetry breaking for the complex sine-Gordon equation}

\author[S.B. Chen]{Shibing Chen}
\address{Shibing Chen, School of Mathematical Sciences,
University of Science and Technology of China,
Hefei, Anhui 230026, China}
\email{chenshib@ustc.edu.cn}

\author[C.F. Gui]{Changfeng Gui}
\address{Changfeng Gui, Department of Mathematics, Faculty of Science and Technology, University of Macau, Macau, P.R. China}
\email{changfenggui@um.edu.mo}

\author[Y. Liu]{Yong Liu}
\address{Yong Liu, School of Mathematics and Statistics, Beijing Technology and Business University, Beijing, China}
\email{yliumath@btbu.edu.cn} 

\author[J.M. Yang]{Jianmin Yang}
\address{Jianmin Yang, Department of Mathematics, Faculty of Science and Technology, University of Macau, Macau, P.R. China}
\email{yangjianmin@mail.ustc.edu.cn} 

\author[W. Yang]{Wen Yang}
\address{\noindent Wen ~Yang,~Department of Mathematics, Faculty of Science and Technology, University of Macau, Macau, P.R. China}
\email{wenyang@um.edu.mo}

\subjclass[2010]{35J61}

\keywords{complex sine-Gordon II, nontrivial kernel, Hirota bilinear derivative, non-degeneracy, stability, Lyapunov-Schmidt reduction}

\date{\today}
\dedicatory{}
\begin{document}
\begin{abstract}
    We consider the existence of vortex solutions to the complex sine-Gordon II (CSG2) equation, which can be viewed as an analogy of the Ginzburg-Landau (GL) equation. Using the nontrivial kernels $\eta_{\pm}$ of the linearized CSG2 equation at the standard degree-2 vortex solution $\Psi_2$, we show that it bifurcating to a one-parameter family of symmetry-breaking solutions $\Psi_{2,\alpha}$. Explicit formulas of these solutions are also available, from which we propose a new bilinear system for this equation. Our method can be generalized to higher degree case. Nondegenracy and stability of degree-1 solution are also proved. Finally, we formally discuss the Lyapunov-Schmidt reduction procedure for the multivortex solutions of the CSG2 and GL equation.

\end{abstract}

\maketitle

\section{Introduction}

The complex sine-Gordon (CSG) equation originates from the Lund-Regge model \cite{LR76}. It mainly occurs in the following two forms, referred to as the CSG1 equation and the CSG2 equation, respectively:
\begin{equation}\label{csg1}
    \Delta \psi+\frac{(\nabla \psi)^2 \bar{\psi}}{1-|\psi|^2}+\psi\left(1-|\psi|^2\right)=0,
\end{equation}
\begin{equation}\label{csg2}
    \Delta \psi+\frac{(\nabla \psi)^2 \bar{\psi}}{2-|\psi|^2}+\frac{1}{2} \psi\left(1-|\psi|^2\right)\left(2-|\psi|^2\right)=0, 
\end{equation}
where, in both cases, $\psi$ is a complex-valued function defined on $\mathbb{R}^2$. Here and below, we set $\nabla=(\partial_{x},\partial_{y})$ and $z=re^{i \theta}=x+iy$. We agree that $\psi(z)=\psi(x,y)$. Note that 
$$
\partial_x=\cos\theta \partial_r-\frac{\sin\theta}{r}\partial_{\theta},\quad \partial_y=\sin\theta \partial_r+\frac{\cos\theta}{r}\partial_{\theta},
$$
and
$$
(\nabla \psi)^2=(\partial_x \psi)^2+(\partial_y \psi)^2=(\partial_r \psi)^2+\frac{1}{r^2}(\partial_{\theta}\psi)^2.
$$
If we assume that $\psi$ is real-valued and set $\psi=\sin (u/2)$ in Eq.~\eqref{csg1} and $\psi=\sqrt{2}\sin(u/4)$ in Eq.~\eqref{csg2}, then both equations reduce to the so-called elliptic sine-Gordon equation
\begin{equation}\label{sg}
    \Delta u+\sin u=0.
\end{equation}
Eq.~\eqref{sg} primarily describes a real scalar field, whereas the CSG equation involves a complex-valued field and therefore naturally carries phase, modulus, topological charge, and vortex structures. It has been widely studied in two-dimensional field theory, nonlinear sigma models, and the theory of solitons and vortices; see \cite{BG87,BG93,BGG03,BP98,BSA02,G77,G80,OB05}. 

The CSG2 equation can be viewed as an integrable analogy of the Ginzburg-Landau (GL) equation
\begin{equation}\label{gl}
    \Delta \psi+\psi (1-|\psi|^2)=0,\quad \psi:\mathbb{R}^2 \longmapsto \mathbb{C}.
\end{equation}
This equation has a family of ``standard vortex solutions''. They are of the form $\psi=f_n(r) e^{i n \theta}$ with $f_n$ being monotone increasing and
$$
f_n(r)\to 1\quad \text{as } r\to \infty.
$$   Here $n$ is called the degree of the solution.
Note that this solution is symmetric, which means that its modulus only depends on the radial coordinate $r$. 

A natural question is whether the Ginzburg-Landau equation has other vortex solutions that break the radial symmetry of the standard one. 
Bethuel-Brezis-H\'elein (BBH) \cite{B23,BBH94}  (Open problem 2.4) conjectured that all the degree $n$ vortex solutions have to be radially symmetric, up to translation. This conjecture has been proved for $|n|\leq 1$ under the additional assumption of finite potential energy, see \cite{BMR94,M96}. For $|n|\geq 2$, however, the conjecture remains unresolved.  It turns out that this conjecture is very difficult. This is the primary motivation for our study of the CSG2 equation here. Since CSG2 is, in a suitable sense, an integrable system, we would like to see whether there are any symmetry-breaking vortex solutions for the CSG2 equation, and we hope that results of CSG2 obtained in this paper can provide useful insights for the understanding of the GL equation. We also mention that the existence or nonexistence of symmetry-breaking solutions is closely related to the kernels of the linearized operator around the radially symmetric vortex solutions. 

At this stage, it is worth remarking that from the variational point of view, the GL equation is closer to CSG2 equation than to CSG1 equation. To explain this in more details, we recall that Eqs.~\eqref{csg1}, \eqref{csg2} and \eqref{gl} can be viewed as the critical point equations of the following energy functionals:
\begin{equation*}
    \begin{aligned}
\mathcal{E}_{\mathrm{CSG1}}\left( \psi \right) &=\int{\left[ \frac{|\nabla \psi |^2}{1-|\psi |^2}+\left( 1-|\psi |^2 \right) \right] dx dy},\\
\mathcal{E}_{\mathrm{CSG2}}\left( \psi \right) &=\int{\left[ \frac{|\nabla \psi |^2}{2-|\psi |^2}+\frac{1}{4}\left( 1-|\psi |^2 \right)^2 \right] dx dy},\\
\mathcal{E}_{\mathrm{GL}}\left( \psi \right) &=\int{\left[ \frac{1}{2}|\nabla \psi |^2+\frac{1}{4}\left( 1-|\psi |^2 \right)^2 \right] dx dy},
    \end{aligned}
\end{equation*}
respectively.  Symmetry breaking solutions of the CSG1 equation expressed in terms of modified Bessel functions are available; see \cite{BSA02}. The moduli of these solutions oscillate around $1$ infinitely many times and the solutions actually have infinite potential energy. We also note that when $|\psi |=1$, the denominator in the energy functional vanishes. On the other hand, for the radial degree-$n$ vortex solution of the GL equation, the second term in the energy functional is finite and no oscillation occurs for that family of solutions. In view of the explicit form of the CSG2 energy functional, we expect CSG2 to admit finite-potential-energy symmetry-breaking solutions whose moduli are strictly less than $1$. In particular, the denominator of the first term in the energy functional will be positive in this case. Moreover, in the degree one case, one also expects that such solution is energy minimizing, a famous property already known for the Ginzburg-Landau equation.

It is worth noting that Ovchinnikov-Sigal \cite{OS97} proposed an energy-based method for constructing non-radial solutions of the GL equation on $\mathbb{R}^2$, relying on a nonvanishing condition for a second-order coefficient in the energy expansion, known as the correlation coefficient. Unfortunately, for the vortex configurations considered by Ovchinnikov-Sigal, Esposito \cite{E13} proved that the corresponding correlation coefficient vanishes. Kurzke \cite{K19} subsequently showed that, for any equilibrium vortex configuration, the corresponding correlation coefficient is identically zero. Thus, for $|n|\geq 2$, the BBH conjecture remains an open problem. Our analysis in this paper shows that the Ovchinnikov-Sigal solution do exist  for the complex sine-Gordon II equation, while also suggests that symmetry-breaking solutions of the Ginzburg-Landau equation should not exist.

A systematic mathematical theory for the singular limit of two-dimensional GL vortices was developed by Bethuel-Brezis-Hélein \cite{BBH94}. They showed that energy concentrates near finitely many vortex points and that the leading interaction between these points is described by a renormalized energy. Lin \cite{Lin95,Lin95-2} subsequently established a relation between solutions of the GL equation and critical points of the renormalized energy and investigated nonminimizing configurations containing vortices and antivortices. These results show that, away from the vortex cores, a multivortex configuration can be described by an effective finite-dimensional point-vortex system. The stability and minimality of radial vortices was studied by Mironescu \cite{M95}. Del Pino-Felmer-Kowalczyk \cite{DFK04} proved the non-degeneracy and stability of the standard degree-1 vortex using Hardy type inequality and obtained a Fredholm theory for the corresponding linearized operator. Such spectral properties are fundamental for Lyapunov-Schmidt constructions of multivortex solutions, see \cite{DKM06}.

For the existence and stability theory of multivortex solutions to the GL equation with the magnetic field, we refer to \cite{GS00,GS06,PTW12,T80,TW13,WW21}. Similar to the deep connection between the Allen-Cahn equation 
\begin{equation*}
    \Delta u+u-u^3=0\quad \text{in }\mathbb{R}^N,
\end{equation*}
and minimal surface theory (see \cite{CW18, P12}), for the magnetic GL equation one can construct solutions whose zero sets concentrate near codimension-two minimal submanifolds and carry out the corresponding stability analysis; see \cite{BD23,BD24,BD24-2,LMWW25,LWY24}. 
Here we point out that the relation between the CSG2 equation and the GL equation is quite similar to the relation between Allen-Cahn equation and the elliptic sine-Gordon equation (see \cite{LW21}).

For the Gross-Pitaevskii (GP) equation
\begin{equation}\label{GP}
    i \partial_t \psi+\Delta \psi+\left(1-|\psi|^2\right) \psi=0,
\end{equation}
which is closely related to the  Ginzburg-Landau equation, the vortex dynamics has also been widely studied. For the construction of traveling waves and vortex ring solutions to Eq.~\eqref{GP}, we refer to \cite{AHLW21,BOS04,BS99,DDMR22,LW20}. Note that the positions of the traveling waves in \cite{AHLW21,LW20} are governed by suitable pairs of Adler-Moser polynomials, and the corresponding non-degeneracy is crucial for handling the projection equations within the Lyapunov-Schmidt reduction framework. Interestingly enough, it turns out that, from the expression of our symmetry-breaking solutions, the Adler-Moser polynomials \cite{Adler} also appear in the complex sine-Gordon II equation.

The main contribution of this paper is to provide explicit expressions and an algorithm for families of multivortex solutions of the CSG2 equation in the plane. 

To better explain our results, now let us introduce the radial vortex solutions of the CSG2 equation.
The standard degree-1 and degree-2 vortex solutions of the CSG2 equation take the following form
$$
\Psi_1=\sqrt{Q_1}e^{i \theta}\quad\text{and} \quad\Psi_2=\sqrt{Q_2}e^{2i \theta},
$$
where 
$$
Q_{1}=\frac{r^{2}}{r^{2}+4}\quad\text{and} \quad Q_{2}=\frac{r^{4}\left(  r^{2}+24\right)  ^{2}}{r^{8}+64r^{6}+1152r^{4}+9216r^{2}+36864}.
$$
We set
$$
\Phi_n(r)=|\Psi_n(z)|=\sqrt{Q_n(r)}.
$$
In fact, using the Schlesinger transformation for the Painlev\'e-V equation, also called the vorticity-raising transformation in \cite{OB05}, Olver-Barashenkov derived the explicit formula for the radial vortex solution of the CSG2 equation with arbitrary vorticity $n$.  

Note that 
\begin{align*}
e^{i\theta} &  =\cos\theta+i\sin\theta=\frac{x+yi}{r},\\
e^{2i\theta} &  =\left(  \frac{x+yi}{r}\right)  ^{2}=\frac{x^{2}-y^{2}%
+2xyi}{r^{2}}.
\end{align*}
Therefore,
\begin{align*}
    \Psi_{1}&=\sqrt{\frac{r^{2}}{r^{2}+4}}e^{i\theta}=\frac{r}{\sqrt{r^{2}+4}}
\frac{x+yi}{r}=\frac{x+yi}{\sqrt{r^{2}+4}},\\
\Psi_{2} &  =\sqrt{\frac{r^{4}\left(  r^{2}+24\right)  ^{2}}{r^{8}%
+64r^{6}+1152r^{4}+9216r^{2}+36864}}e^{2i\theta}\\
&  =\frac{r^{2}\left(  r^{2}+24\right)  }{\sqrt{r^{8}+64r^{6}+1152r^{4}%
+9216r^{2}+36864}}\frac{x^{2}-y^{2}+2xyi}{r^{2}}\\
&  =\frac{\left(  r^{2}+24\right)  \left(  x^{2}-y^{2}+2xyi\right)  }%
{\sqrt{r^{8}+64r^{6}+1152r^{4}+9216r^{2}+36864}}.
\end{align*}
Note that $\Phi_1(r)$ has the following asymptotic behavior 
\begin{equation}\label{asy}
    \begin{aligned}
        \Phi_1(r)&=\frac{1}{2}r+O(r^3)\quad \text{as } r\to 0,\\
        \Phi_1(r)&=1-\frac{2}{r^2}+O\left(\frac{1}{r^4}\right)\quad \text{as } r\to \infty,\\
        \Phi'_1(r)&=\frac{4}{r^3}+O\left(\frac{1}{r^5}\right)\quad \text{as } r\to \infty.
    \end{aligned}
\end{equation}
We expect the existence of a family of degree-2 vortex solutions near $\Psi_{2}$. One of our key observations is that the
precise form of these solutions should have the following form
\begin{equation}\label{degree2}
    \begin{aligned}
\Psi _{2,\alpha}&=\frac{\beta _{1,\alpha}\left( x^2-y^2-\alpha x+\alpha y \right) +\beta _{2,\alpha}\left( 2xy+\alpha x+\alpha y \right)}{\sqrt{P_{\alpha}}}\\
&+\frac{\gamma _{1,\alpha}\left( 2xy+\alpha x+\alpha y \right) +\gamma _{2,\alpha}\left( x^2-y^2-\alpha x+\alpha y \right)}{\sqrt{P_{\alpha}}}i.
\end{aligned}
\end{equation}
Here
$$
\beta_{1,0}  =\gamma_{1,0}=r^{2}+24,\quad 
    \beta_{2,0}   =\gamma_{2,0}=0,
$$
and
$$
P_{0} =r^{8}+64r^{6}+1152r^{4}+9216r^{2}+36864.
$$
We will later explain why it is natural to conjecture that the numerator of $\Psi _{2,\alpha}$ admits the above perturbative form, based on the existence of the nontrivial kernel of $\Psi_2$. 

We can now state the main result of this paper, namely the existence of a one-parameter degree-2 family of symmetry breaking vortex solutions of the CSG2 equation. 
\begin{theorem}\label{thm}
    The CSG2 equation admits a nontrivial one-parameter family of degree-2 vortex solutions $\Psi _{2,\alpha}$ of type \eqref{degree2}, where
   \begin{equation}\label{bb}
    \beta _{1,\alpha}=r^2+24+\alpha \left( x+y \right) +2\alpha ^2,\quad \beta _{2,\alpha}=\alpha \left( -x+y \right) ,
   \end{equation}
\begin{equation}\label{gg}
    \gamma _{1,\alpha}=r^2+24-\alpha \left( x+y \right) +2\alpha ^2,\quad \gamma _{2,\alpha}=\alpha \left( -x+y \right) ,
\end{equation}
\begin{equation}\label{PP}
    \begin{aligned}
P_{\alpha}&=\sum_{k=0}^6{\alpha ^k\mathcal{P}_k}\\
&=r^8+64r^6+1152r^4+9216r^2+36864\\
&+\alpha \left( -48x^5+144x^4y+96x^3y^2-768x^3+96x^2y^3 \right.\\
&\quad\quad \left. +2304x^2y+144xy^4+2304xy^2-48y^5-768y^3 \right)\\
&+\alpha ^2\left( 1152r^2+4608 \right)\\
&+\alpha ^3\left( -4x^5+12x^4y+8x^3y^2-64x^3+8x^2y^3 \right.\\
&\quad\quad \left. +192x^2y+12xy^4+192xy^2-4y^5-64y^3 \right)\\
&+\alpha ^4\left( 192r^2+768 \right) +\alpha ^6\left(8 r^2+32 \right) .
    \end{aligned}
\end{equation}
\end{theorem}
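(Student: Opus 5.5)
This is a verification statement: the formulas \eqref{bb}, \eqref{gg}, \eqref{PP} are explicit rational expressions in $(x,y,\alpha)$, so the proof is a direct, if heavy, algebraic substitution into \eqref{csg2}. The plan is to avoid expanding \eqref{csg2} naively and instead pass to a polynomial identity, then check that identity coefficient by coefficient in $\alpha$.

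\textbf{Step 1: reduction to polynomial form.} Write $\Psi_{2,\alpha}=N/\sqrt{P}$, where
\[
N=(\beta_1+i\gamma_2)(x^2-y^2-\alpha x+\alpha y)+(\beta_2+i\gamma_1)(2xy+\alpha x+\alpha y)
\]
is a complex polynomial and $P=P_\alpha$ is a real polynomial. Then $|\psi|^2=|N|^2/P$. The key structural identity to establish first is
\[
P-|N|^2=:G,
\]
where $G$ should be a perfect square or a simple factorized polynomial. In the radial case this reads $P_0-r^4(r^2+24)^2=64(r^2+6)^3\cdot\ldots$, and the analogous identity for $2P-|N|^2$ is also needed. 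These control the factors $1-|\psi|^2=(P-|N|^2)/P$ and $2-|\psi|^2=(2P-|N|^2)/P$.

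With $\psi=N P^{-1/2}$, compute
\[
\nabla\psi=P^{-3/2}\bigl(P\nabla N-\tfrac12 N\nabla P\bigr),\qquad
\Delta\psi=P^{-1/2}\Delta N-P^{-3/2}\nabla N\cdot\nabla P+\cdots .
\]
Multiplying \eqref{csg2} by $P^{5/2}(2P-|N|^2)$ yields a polynomial identity $\mathcal{F}(x,y,\alpha)=0$. In this identity $\bar\psi$ enters through $\bar N$, and every term is polynomial because the half-integer powers of $P$ cancel uniformly.

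\textbf{Step 2: verification in $\alpha$.} Expand $\mathcal{F}=\sum_k\alpha^k\mathcal{F}_k$. The order-$0$ term vanishes because $\Psi_2$ solves \eqref{csg2}. The order-$1$ term is exactly the statement that the $\alpha$-derivative at $0$ lies in the kernel $\eta_\pm$ of the linearized operator; this is the heuristic behind the ansatz \eqref{degree2}. All higher $\mathcal{F}_k$ must be checked. Since $\deg_\alpha$ of $N$ is at most $3$ and of $P$ is at most $6$, only finitely many $\mathcal{F}_k$ occur, and each is a polynomial in $(x,y)$ of bounded degree. Checking them is a finite computation that a computer algebra system certifies.

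To reduce the work, exploit symmetry. The expressions are invariant under the reflection $(x,y)\mapsto(y,x)$ combined with conjugation and a suitable sign, and the shift structure in $x\pm y$ suggests rotating coordinates by $\pi/4$. In the rotated coordinates $u=(x+y)/\sqrt2$, $v=(y-x)/\sqrt2$ the polynomials simplify. I also expect $\Psi_{2,\alpha}$ to be a two-vortex configuration whose numerator is generated by an Adler–Moser-type polynomial, which would explain the closed form.

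\textbf{Step 3: the remaining claims.} Nontriviality follows because $P_\alpha$ is not radial for $\alpha\ne0$: the odd $\alpha$ terms are non-radial quintics. Degree $2$ holds because at infinity $N\sim r^2 z^2\cdot(1+O(1/r))$ while $P\sim r^8$, so $\psi\sim z^2/|z|^2$ and $|\psi|\to1$. Regularity requires $P_\alpha>0$ on $\mathbb{R}^2$. For small $\alpha$ this follows from $P_0\ge 36864$ together with the lower-order bounds. For all $\alpha$ I would aim to write $P_\alpha=|N|^2+G$ with $G>0$; I expect $G$ to be a sum of squares like $64|\,\cdot\,|^2+\ldots$, which also gives $|\psi|<1$.

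\textbf{Main obstacle.} The hardest step is organizing the verification of $\mathcal{F}\equiv0$ in a human-readable way rather than as brute expansion. My first attempt would be a bilinear (Hirota-type) rewriting, $\psi=g/f$ with suitable $f$ and $g$, in which \eqref{csg2} splits into two or three bilinear equations such as $D_z D_{\bar z}\,f\cdot f=\ldots$, each checked by a low-degree computation. Failing that, the fallback is symbolic verification of the finitely many coefficients $\mathcal{F}_k$.
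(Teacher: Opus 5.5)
Your plan is essentially the paper's proof: the theorem is verified by substituting the explicit $N_{1,\alpha}+iN_{2,\alpha}$ and $P_\alpha$ into a denominator-cleared polynomial form of \eqref{csg2}, and computer algebra confirms that every coefficient in $\alpha$ vanishes. The paper uses the real form \eqref{ration3}. Your suggested bilinear route is also what the paper does, with two provisos: it must be set up for $\eta=\psi^2=g/f$ with $g=(N_{1,\alpha}+iN_{2,\alpha})^2$ and $f=P_\alpha$, not for $\psi$ itself, since $\sqrt{P_\alpha}$ is not a polynomial; and it succeeds only with the modified system \eqref{system5}, because $\Psi_{2,\alpha}$ does not satisfy Getmanov's system \eqref{system3}.

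Two small corrections. First, the radial identity is $P_0-r^4(r^2+24)^2=16(r^6+36r^4+576r^2+2304)$, not $64(r^2+6)^3\cdot\ldots$; this is the $\alpha=0$ case of $P_\alpha=N_{1,\alpha}^2+N_{2,\alpha}^2+16S_\mu$. Second, non-radiality of $P_\alpha$ about the origin does not by itself rule out a translate of $\Psi_2$. Argue nontriviality instead from the four distinct zeros $0,p_1,p_2,p_3$: this is a $(1,3)$-configuration, not a two-vortex one.
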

\begin{remark}
The vortex points of this family precisely have the same configuration as that of Ovchinnikov-Sigal \cite{OS97}. Let us set
$$
R_{\alpha}=x^2-y^2-\alpha x+\alpha y,\quad I_{\alpha}=2xy+\alpha x+\alpha y,
$$
and denote the real and imaginary parts of the numerator of $\Psi_{2,\alpha}$ by
\begin{equation}\label{linear system}
    N_{1,\alpha}=\beta_{1,\alpha}R_{\alpha}+\beta_{2,\alpha}I_{\alpha},\quad N_{2,\alpha}=\gamma_{2,\alpha}R_{\alpha}+\gamma_{1,\alpha}I_{\alpha},
\end{equation}
respectively. Here $\beta_{j,\alpha}$ and $\gamma_{j,\alpha}$ are given by \eqref{bb}-\eqref{gg}. For $P_{\alpha}$ given in \eqref{PP}, using the numerical computation software \textit{Mathematica}, we obtain
\begin{equation*}
P_\alpha=(N_{1,\alpha})^2+(N_{2,\alpha})^2+16\Bigl(|z^3+\alpha(\alpha^2+12)(1-i)|^2+36|z|^4+576|z|^2+2304\Bigr)>0,
\end{equation*}
which implies that $|\Psi_{2,\alpha}|<1$.
The determinant of the linear system \eqref{linear system} in $(R_{\alpha},I_{\alpha})$ is 
\begin{equation*}
\begin{aligned}
\beta_{1,\alpha}\gamma_{1,\alpha}-\beta_{2,\alpha}\gamma_{2,\alpha}
&=(r^2+24+2\alpha^2)^2-2\alpha^2r^2\\
&=r^4+(48+2\alpha^2)r^2+(24+2\alpha^2)^2>0.
\end{aligned}
\end{equation*}
Thus $\Psi_{2,\alpha}(z)=0$ if and only if $R_{\alpha}(z)=I_{\alpha}(z)=0$. Consequently, the zeros of $\Psi_{2,\alpha}$ are given by the origin $p_0$, together with the following three points:
\begin{equation}\label{pj}
    \begin{aligned}
        p_1&=\sqrt{2}\alpha e^{-3\pi i/4}=-\alpha(1+i),\\
        p_2&=\sqrt{2}\alpha e^{-\pi i/12}=\frac{\alpha}{2}(\sqrt{3}+1)-\frac{i \alpha}{2}(\sqrt{3}-1),\\
        p_3&=\sqrt{2}\alpha e^{7\pi i/12}=-\frac{\alpha}{2}(\sqrt{3}-1)+\frac{i \alpha}{2}(\sqrt{3}+1).
    \end{aligned}
\end{equation}
We refer to such a vortex configuration as an equilateral triangular configuration.
Of course, by rotating the coordinates one obtains one-parameter families of equilateral triangular configurations with arbitrary orientation. 
\end{remark}

\begin{remark}
    Although all subsequent arguments are carried out for a nonnegative real parameter $\alpha$, the Hirota bilinear-derivative formulation in Section \ref{section4} shows that $\alpha$ can in fact be taken to be an arbitrary complex parameter.

We use the complex notation
\[
        z=x+iy,\quad \bar z=x-iy,\quad s=z\bar z=r^2 .
\] Let us introduce a complex parameter $\mu$ and denote by
    $$
    W_{\mu}=(|z|^2+24)z^2-2\mu \bar{z},\quad T_{\mu}=z^3+\mu,
    $$
$$
S_{\mu}=|T_{\mu}|^2+36|z|^4+576|z|^2+2304,\quad Q_{\mu}=|W_{\mu}|^2+16S_{\mu}.
$$
Define 
$$
\widetilde{\Psi}_{2,\mu}(z)=\frac{W_{\mu}(z)}{\sqrt{Q_{\mu}(z)}}.
$$
We find that $\widetilde{\Psi}_{2,\mu}$ is also a solution of the CSG2 equation with the $(1,3)$-configuration. Moreover, the family in Theorem \ref{thm} is just obtained by choosing 
\begin{equation}\label{mu}
    \mu=\alpha(\alpha^2+12)(1-i),
\end{equation}
and then
$$
W_{\mu}=N_{1,\alpha}+ i N_{2,\alpha},\quad Q_{\mu}=P_{\alpha}.
$$
Hence, under the choice of the complex parameter $\mu$ given in \eqref{mu}, we obtain
$$
\widetilde{\Psi}_{2,\mu}=\Psi_{2,\alpha}.
$$
Set 
$$
\eta=(\widetilde{\Psi}_{2,\mu})^2=\frac{g}{f}.
$$
Using the Hirota bilinear derivative operator $D$ defined in Section \ref{section4}, we find that the real function $f$ and the complex function $g$ satisfy the following system of bilinear equations:
\begin{equation}\label{eq:key-id-1}
        D^2 f\cdot f=512(S_{\mu})^2=2(f-|g|)^2,
\end{equation}
\begin{equation*}
        D^2 g\cdot g=2gh,
\end{equation*}
\begin{equation}\label{eq:key-id-3}
        D^2 g\cdot f=-16S_{\mu}(g+h),
\end{equation}
where $h=192 z (z^3+4\mu)$. Substituting formulas \eqref{eq:key-id-1}-\eqref{eq:key-id-3} into the right-hand side of formula \eqref{LHS}, and noting that the last line vanishes, we find that
\begin{equation*}
    \begin{aligned}
        \mathrm{LHS}\text{ of } \eqref{LHS}&=\frac{g\left( |g|-3f \right)}{2\left( 2f-|g| \right)}2f^2+\frac{\bar{g}}{|g|^2}\frac{f^2}{2f-|g|}g^2(3f-|g|)\\
        &=\frac{g f^2(|g|-3f)}{2f-|g|}+\frac{g f^2(3f-|g|)}{2f-|g|}\\
        &=0,
    \end{aligned}
\end{equation*}
which implies that $\widetilde{\Psi}_{2,\mu}$ solves the CSG2 equation. The derivation and transformation of the bilinear form of the equation can be found in Section \ref{section4}.
\end{remark}

Our method of proof also works for the degree 3 case, yielding a two-parameter family of exact degree-3
solutions of the CSG2 equation.  The family is naturally related to the
Adler--Moser polynomials and gives rise to two nontrivial bounded kernels of the linearized operator at the radial degree-3 vortex.
The radial degree-3 vortex solution is (see \cite{OB05}):
\begin{equation*}
\Psi_3(z)=\frac{z^3F_3(s)}{\sqrt{D_3(s)}},
\end{equation*}
where
\begin{equation}\label{eq:F3}
  F_3(s)=s^3+144s^2+5760s+92160,
\end{equation}
and
\begin{equation}\label{eq:D3}
    \begin{aligned}
        D_3(s)={}&s^9+324s^8+41472s^7+2820096s^6
        +114130944s^5 \\
&+2919628800s^4+50960793600s^3
        +611529523200s^2 \\
&+4892236185600s+19568944742400 .
    \end{aligned}
\end{equation}

\begin{theorem}[A two-parameter degree-3 family]
\label{thm:degree3-complex-family}
For every \(a,b\in\mathbb{C}\), the function
\begin{equation*}
    \Psi_{3,a,b}(z)=\frac{W_{a,b}(z,\bar z)}
    {\sqrt{P_{a,b}(z,\bar z)}}
\end{equation*}
is a smooth solution of the CSG2 equation \eqref{csg2}.  Its numerator is
\begin{align}
W_{a,b}={}&z^3F_3(s)
-\frac14\bar a\,z^6
-\frac54a\left(s^3+72s^2+1152s-9216\right) \notag\\
&-b(s+24)\bar z^2
+\frac14\bar a b\,z
+\frac5{16}a\bar a\,z^3
-\frac5{16}a^2\bar z^3
+\frac5{64}a^2\bar a .
\label{eq:N-complex}
\end{align}
The denominator \(P_{a,b}\) is the positive polynomial given by 
\begin{equation*}
\begin{aligned}
P_{a,b}={}&19568944742400
+4892236185600|z|^2
+611529523200|z^2|^2\\
&+50960793600|z|^6
+398131200|a|^2\\
&+2919628800\left|z^4-\frac1{22}az\right|^2
+\frac{298598400}{11}|az|^2\\
&+114130944\left|z^5-\frac{25}{172}az^2-\frac1{86}b\right|^2\\
&+\frac{74649600}{43}\left|az^2-\frac19b\right|^2\\
&+2820096\left|z^6-\frac{35}{102}az^3-\frac5{816}a^2
-\frac8{153}bz\right|^2\\
&+\frac{230400}{17}\left|az^3-\frac7{16}a^2-\frac13bz\right|^2\\
&+41472\left|z^7-\frac58az^4-\frac16bz^2\right|^2\\
&+324\left|z^8-az^5-\frac49bz^3+\frac1{36}ab\right|^2\\
&+\left|z^9-\frac32az^6+\frac5{64}a^3-bz^4+\frac14abz\right|^2 .
\end{aligned}
\end{equation*}
\end{theorem}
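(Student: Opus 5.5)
We will verify the statement directly, through the bilinear reformulation of CSG2 that the remark after Theorem \ref{thm} attributes to Section \ref{section4}. Set $\eta=\Psi_{3,a,b}^2=g/f$ with $f=P_{a,b}$ (real) and $g=W_{a,b}^2$ (complex). Then $\Psi$ solves \eqref{csg2} exactly when the expression on the left-hand side of \eqref{LHS} vanishes. That expression is a combination of $D^2f\cdot f$, $D^2g\cdot g$ and $D^2g\cdot f$ with coefficients rational in $f$ and $|g|$.

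The plan is to prove degree-3 analogues of \eqref{eq:key-id-1}--\eqref{eq:key-id-3}. For a suitable real polynomial $S_{a,b}$ and complex polynomial $h_{a,b}$ we aim to show
\[
D^2 f\cdot f=2(f-|g|)^2,\qquad D^2g\cdot g=2gh_{a,b},\qquad D^2g\cdot f=-c\,S_{a,b}(g+h_{a,b}).
\]
Here $f-|g|=P_{a,b}-|W_{a,b}|^2$ should itself be a constant times $S_{a,b}$, and $c$ is fixed by this normalisation. Once these hold, the algebra in the remark after Theorem \ref{thm} applies verbatim and the left-hand side of \eqref{LHS} is $0$.

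\textbf{Step 1: candidate $S_{a,b}$ and positivity.} We read off $S_{a,b}$ from the sum-of-squares form of $P_{a,b}$. All squared terms except the last one, $|z^9-\tfrac32az^6+\dots|^2$, should together make up $P_{a,b}-|W_{a,b}|^2$, after checking that $\overline{W_{a,b}}$ matches that last modulus up to the expected identity. This reading gives three things at once.
\begin{itemize}
\item $P_{a,b}>0$, because of the positive constant $19568944742400$, so $\Psi_{3,a,b}$ is smooth.
\item $|\Psi_{3,a,b}|<1$, so the denominator $2-|\psi|^2$ in \eqref{csg2} never vanishes.
\item A consistency check: at $a=b=0$ we should recover $P_{0,0}=D_3(s)$ and $W_{0,0}=z^3F_3(s)$, which is the radial solution $\Psi_3$ of \cite{OB05}.
\end{itemize}

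\textbf{Step 2: the bilinear identities.} For $h_{a,b}$ we take the analogue of $192z(z^3+4\mu)$: a holomorphic-in-$z$ polynomial forced by $D^2g\cdot g=2gh$. Writing $D^2$ through $\partial_z\partial_{\bar z}$ reduces this identity to a statement about $W$ alone,
\[
W\Delta W-(\nabla W)^2=\tfrac12 h\,W^2/W\ \ (\text{suitably normalised}),
\]
so $h$ can be solved for directly. Each identity is then a polynomial identity in $z,\bar z,a,\bar a,b,\bar b$ of bounded degree. The degree is at most $18$ in $(z,\bar z)$, and the parameters $a,b$ carry weights $3$ and $5$, which make everything weighted-homogeneous. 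Weighted homogeneity lets us check the identities coefficientwise, by grading in the parameters $(a,\bar a,b,\bar b)$.

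\textbf{Main obstacle.} The hardest part is establishing $D^2f\cdot f=2(f-|g|)^2$ for the full two-parameter denominator. It is a degree-36 identity with many cross terms. We would organise it by the weight grading, handling the order-$0$, order-$1$ and order-$2$ terms in $(a,b)$ by hand or with computer algebra, and then verifying the remaining graded pieces symbolically.

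A more conceptual alternative would be to construct the family by a Darboux/Schlesinger-type vorticity-raising transformation applied to the degree-2 family $\widetilde\Psi_{2,\mu}$, which would make the identities automatic. Failing that, the direct symbolic verification, as done for Theorem \ref{thm}, completes the proof.
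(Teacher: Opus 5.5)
Your overall strategy is sound: verify the modified bilinear system \eqref{system5} for $f=P_{a,b}$, $g=W_{a,b}^2$ by computer algebra. Section~\ref{section4} in fact asserts that this family satisfies \eqref{system5}. The paper's own check is different: it verifies that the single polynomial residual \eqref{eq:poly-residual-full}, obtained by multiplying \eqref{csg2} by $P^2\sqrt P\,H$, vanishes identically.

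The step that fails is Step~1. The last square in $P_{a,b}$ is $|\Theta_3\Theta_2|^2=|\Theta_3\overline{\Theta_2}|^2$, with $\tau_3=-a/4$, $\tau_5=-b$. That is $|W_{a,b}|^2$ with the dressing terms $z^3(144s^2+5760s+92160)$, $-\tfrac54a(72s^2+1152s-9216)$ and $-24b\bar z^2$ removed, so it is not $|W_{a,b}|^2$.

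This already fails at $a=b=0$:
\begin{itemize}
\item $|W_{0,0}|^2=s^3F_3(s)^2=s^9+288s^8+32256s^7+\cdots$.
\item Hence $P_{0,0}-|W_{0,0}|^2=36s^8+9216s^7+976896s^6+\cdots$.
\item Your candidate, the sum of all squares but the last, is $D_3(s)-s^9=324s^8+41472s^7+\cdots$.
\end{itemize}
Your own first identity rules the candidate out. For radial $f=F(s)$ one has $D^2f\cdot f=8(FF'+sFF''-sF'^2)=8\cdot 324\,s^{16}+32\cdot 41472\,s^{15}+\cdots$. This matches $2(36s^8+9216s^7+\cdots)^2$, giving $c=36$, and not $2(324s^8+\cdots)^2$.

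Consequently your bullet ``$|\Psi_{3,a,b}|<1$'' has no valid proof, and this matters. The quantity $2f-|g|=P_{a,b}(2-|\Psi_{3,a,b}|^2)$ sits in the denominators of \eqref{csg2} and of the reduction \eqref{LHS} from \eqref{system5} to $E(\eta)=0$. So you need $2P_{a,b}-|W_{a,b}|^2>0$ everywhere before you can call $\Psi_{3,a,b}$ a solution. The paper leaves this implicit in degree~3; in degree~2 it proves it via $P_\alpha=N_{1,\alpha}^2+N_{2,\alpha}^2+16S_\mu$. You must compute $P_{a,b}-|W_{a,b}|^2$ directly and prove it is positive, for instance by finding its own sum-of-squares form as in degree~2, or at least prove $2P_{a,b}>|W_{a,b}|^2$.

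Three smaller corrections:
\begin{itemize}
\item The second identity is not a constraint but the definition $h=D^2W\cdot W=2(W\Delta W-(\nabla W)^2)$. Your display has a spurious factor $W^2/W$. All the real content is in the first and third identities.
\item $W_{a,b}$ is not weighted-homogeneous, since the constants in $F_3$ and the dressing terms break it. You can only grade by degree in $(a,\bar a,b,\bar b)$, which is harmless for checking a polynomial identity.
\item $E(\eta)$ equals $2\psi$ times the left side of \eqref{csg2} only where $\psi\neq 0$. You therefore obtain \eqref{csg2} off the zero set of $W_{a,b}$, which has empty interior, and then everywhere by continuity.
\end{itemize}
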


\begin{remark}[Relation with the Adler--Moser polynomials]
    The Adler--Moser polynomials relevant to this degree-3 family are
\begin{equation*}
    \Theta_2(z)=z^3+\tau_3,\qquad
    \Theta_3(z)=z^6+5\tau_3z^3+\tau_5z-5\tau_3^2 .
\end{equation*}
The parameters used above are related to the Adler--Moser parameters by
\begin{equation}\label{eq:tau-ab}
    \tau_3=-\frac{a}{4},\qquad \tau_5=-b .
\end{equation}
The leading weighted part of the numerator is governed by
\[
    \Theta_3(z)\overline{\Theta_2(z)} .
\]
More explicitly,
\begin{align*}
\Theta_3(z)\overline{\Theta_2(z)}
=&z^3s^3+\bar\tau_3z^6+5\tau_3s^3+\tau_5s\bar z^2
+\bar\tau_3\tau_5z\\
&+5\tau_3\bar\tau_3z^3-5\tau_3^2\bar z^3
-5\tau_3^2\bar\tau_3 .
\end{align*}
After inserting \eqref{eq:tau-ab}, we find that this is precisely the highest-weight (Parameters in Adler-Moser polynomials have weights, making the polynomial homogeneous)
structure of \eqref{eq:N-complex}.  The remaining lower-order radial terms in
\eqref{eq:N-complex} are the CSG2 dressing corrections. We believe this is a common feature for all degrees. Note that the roots of the numerator may not be same as the roots of the Adler-Moser polynomials.
\end{remark}

\begin{remark}[Nontrivial kernel of $\Psi_3$]
     Denote by the linearized CSG2 operator at a solution $\Psi$ by 
    \begin{equation*}
        \begin{aligned}
            \mathcal{L}_{\Psi}\eta=&\Delta\eta+\frac{2(\nabla\Psi\cdot\nabla\eta)\bar\Psi+(\nabla\Psi)^2\bar\eta}{2-|\Psi|^2}
+\frac{(\nabla\Psi)^2\bar\Psi}{(2-|\Psi|^2)^2}\left(\Psi\bar\eta+\bar\Psi\eta\right)\\
&+\frac12(2-3|\Psi|^2+|\Psi|^4)\eta
+\frac12\Psi(-3+2|\Psi|^2)\left(\Psi\bar\eta+\bar\Psi\eta\right).
        \end{aligned}
    \end{equation*}
Since \(\Psi_{3,a,b}\) is an exact family, differentiating
it with respect to the real and imaginary parts of \(a\) and
\(b\) at \((a,b)=(0,0)\) gives following bounded kernels of
\(\mathcal{L}_{\Psi_3}\):
\[
    \partial_a\Psi_{3,a,b}\big|_{a=b=0},\quad
    \partial_{\bar a}\Psi_{3,a,b}\big|_{a=b=0},\quad
    \partial_b\Psi_{3,a,b}\big|_{a=b=0},\quad
    \partial_{\bar b}\Psi_{3,a,b}\big|_{a=b=0}.
\]
We compute 
$$
\partial_a P_{a,b}\big|_{a=b=0} = -\bar z^{3}A(s), \quad \partial_{\bar a}P_{a,b}\big|_{a=b=0} = -z^3A(s),
$$
$$
\partial_b P\big|_{a=b=0} = -\bar z^{5}B(s),\quad \partial_{\bar b}P\big|_{a=b=0} = -z^5B(s),
$$
where 
$$
\begin{aligned}
    A(s)&=\frac{3}{2}s^6+324s^5+25920s^4+967680s^3
      +16588800s^2+132710400s,\\
B(s)&=s^4+144s^3+6912s^2+147456s+1327104.
\end{aligned}
$$
For $F_3,D_3$ given by \eqref{eq:F3}-\eqref{eq:D3}, we have  
\begin{equation*}
    \begin{aligned}
        \partial_a\Psi_{3,a,b}\big|_{a=b=0}
&=
\frac{
-\frac54\left(s^3+72s^2+1152s-9216\right)D_3(s)
+\frac12 s^3F_3(s)A(s)
}{
D_3(s)^{3/2}
},\\[0.4em]
\partial_{\bar a}\Psi_{3,a,b}\big|_{a=b=0}
&=
z^6
\frac{
-\frac14D_3(s)+\frac12F_3(s)A(s)
}{
D_3(s)^{3/2}
},\\[0.4em]
\partial_b\Psi_{3,a,b}\big|_{a=b=0}
&=
\bar z^2
\frac{
-(s+24)D_3(s)+\frac12s^3F_3(s)B(s)
}{
D_3(s)^{3/2}
},\\[0.4em]
\partial_{\bar b}\Psi_{3,a,b}\big|_{a=b=0}
&=
\frac12
z^8
\frac{
F_3(s)B(s)
}{
D_3(s)^{3/2}
},
    \end{aligned}
\end{equation*}
After factoring out the background phase $e^{3i\theta}$, we find 
the \(\bar{a},a\)-direction correspond to
the Fourier mode \(m=\pm3\), while the \(\bar{b},b\)-direction correspond to the Fourier mode of \(m=\pm 5\). For general degree $n$ solutions, those Fourier modes should be $3,5, ..., 2n-1$, yielding $n-1$ free complex parameters in the solution space. Note that in reality, nontrivial kernels are obtained before we found the exact form of the solutions.
\end{remark}

\begin{remark}
    The solution in Theorem \ref{thm:degree3-complex-family} can be verified using numerical software. Here we present the key transformed form of the equation. Write
\[
    \Psi=\frac{W}{\sqrt P},\quad
    \bar\Psi=\frac{M}{\sqrt P},
\]
where \(M\) is obtained from \(W\) by conjugating \(z,a,b\) into
\(\bar z,\bar a,\bar b\).  Treating \(z\) and \(\bar z\) as independent
variables, define
\[
    H=2P-WM,\quad K=P-WM,
\]
and
\[
    A=2PW_z-WP_z,\qquad B=2PW_{\bar z}-WP_{\bar z}.
\]
Multiplying the CSG2 equation by \(P^2\sqrt P\,H\), one obtains the
polynomial residual
\begin{equation}\label{eq:poly-residual-full}
    (2PA_{\bar z}-3AP_{\bar z})H+ABM+\frac12WKH^2 .
\end{equation}
For the polynomials \(W=W_{a,b}\) and \(P=P_{a,b}\) given in Theorem \ref{thm:degree3-complex-family}, this residual \eqref{eq:poly-residual-full} is identically zero.
\end{remark}

 We conjecture that the one-parameter family of solutions $\Psi_{2,\alpha}$ with the $(1,3)$-configuration constructed in this paper has Morse index 2. By way of comparison, Liu-Wei-Yang \cite{LWY24} used an inverse Lyapunov-Schmidt reduction argument to prove that the degree-6 lump solution of the KP-I equation (a class of classical integrable systems) obtained via the Bäcklund transformation has Morse index 4. It is worth noting that the lump locations considered there also form an equilateral-triangle configuration (but without a central point, consisting only of the three vertices), and that the corresponding vortex positions are also degenerate.

We briefly describe the strategy for finding the multivortex solution $\Psi_{2,\alpha}$ with the $(1,3)$-configuration in Theorem \ref{thm}, as well as the structure of the paper. In Section \ref{section2}, we first study the linearized operator of the CSG2 equation and find that it admits nontrivial kernels in the Fourier modes $m=\pm 3$. Starting from this observation, in Section \ref{section3}, we essentially use the structure of the roots of Adler-Moser polynomials to develop a method of undetermined coefficients which enables us to find an exact expression for $\Psi_{2,\alpha}$. This part is quite delicate, since the undetermined coefficients satisfy a highly nonlinear system of equations, and the computation is not easy. Our solutions cannot be obtained from the usual Hirota bilinear form of Getmanov.  Indeed, in Section \ref{section4}, we transform the CSG2 equation into Hirota bilinear-derivative form and check that the solution $\Psi_{2,\alpha}$ does not satisfy the bilinear-derivative system in \cite{G80}. We then propose a new bilinear system, which our solutions do satisfy. This will enable us to find higher degree solutions. In Section \ref{section5}, we study the non-degeneracy of $\Psi_1$ and the almost non-degeneracy (except for the Fourier modes $m=\pm 2$) of $\Psi_2$, as well as the stability of $\Psi_1$. Finally, in Section \ref{section6}, based on the Lyapunov-Schmidt reduction framework, we formally discuss the key ingredients in the construction of multivortex solutions for sufficiently large $\alpha$.

As a final remark, we would like to point out that now we can explicitly write down degree 2 and degree 3 solutions, and in view of this, it is expected (at least formally) that the space of all degree $n$ solutions has complex dimension $n$. That is, the solutions have $n$ free complex parameters. They should be associated to some polynomials of degree $2n^2$, and the vortex points should be determined by two consecutive Adler-Moser polynomials. A complete construction and classification of all these solutions would need a full understanding of the Inverse Scattering Transform of the complex sine-Gordon II equation, which is expected to be very similar to what we have done for the traveling lump solutions of the KP-I equation \cite{LW25}. However, this seems to be nontrivial at this moment. We hope to be able to return to this issue in the future.

\section{The nontrivial kernel of degree-2 vortex solution}\label{section2}

Let us consider the (2+1)-dimensional generalization of \eqref{csg2}:
\begin{equation}\label{CSG2-t}
\psi_{tt}-\Delta \psi-\frac{(\nabla \psi)^2 \bar{\psi}}{2-|\psi|^2}-\frac{1}{2} \psi\left(1-|\psi|^2\right)\left(2-|\psi|^2\right)=0.
\end{equation}
Assume $\Psi_n(z)=\Phi _n\left( r \right) e^{i n\theta}$ is the radial degree-$n$ vortex solution to \eqref{csg2}. Now we consider a solution of \eqref{CSG2-t} of the form  
\begin{equation}\label{purt}
\psi \left( r,\theta ,t \right) = \left[ \Phi _n\left( r \right) +\varepsilon \phi \left( r,\theta \right) \cos (\omega t) \right] e^{i n\theta}.
\end{equation}
Linearizing Eq.~\eqref{CSG2-t} in small $\varepsilon$, we obtain 
\begin{equation}\label{Ln}
    \begin{aligned}
\mathcal{L}_n\phi:=&-\nabla _{r}^{2}\phi -\frac{1}{r^2}\partial _{\theta}^{2}\phi-\frac{2\Phi _n\Phi'_n}{2-\Phi _{n}^{2}}\partial _r\phi -\frac{4i n}{r^2}\frac{1}{2-\Phi _{n}^{2}}\partial _{\theta}\phi\\
&+\left[ \frac{\left( 4n^2/r^2 \right) -\left( \Phi'_n \right) ^2\Phi _{n}^{2}}{\left( 2-\Phi _{n}^{2} \right) ^2}-\frac{3}{2}\Phi _{n}^{4}+3\Phi _{n}^{2}-1  \right] \phi \\
&+\left[ \frac{\left( 2n^2/r^2 \right) \Phi _{n}^{2}-2\left( \Phi'_n \right) ^2}{\left( 2-\Phi _{n}^{2} \right) ^2}-\Phi _{n}^{4}+\frac{3}{2}\Phi _{n}^{2} \right] \bar{\phi}=\omega ^2\phi,
    \end{aligned}
\end{equation}
where $\nabla _{r}^{2}=\partial^2 _{r}+r^{-1}\partial _r$. Expanding $\phi$ in the Fourier series in $\theta$:
\begin{equation}\label{fourier}
\phi(r, \theta)=\sum_{m=-\infty}^{\infty} \phi_m(r) e^{i  m \theta}=\sum_{m=-\infty}^{\infty}\left[a_m(r)+i  b_m(r)\right] e^{i  m \theta},
\end{equation}
and transforming to 
\begin{equation}\label{uv1}
    u_m(r)=a_m+a_{-m},\quad v_m(r)=a_m-a_{-m}.
\end{equation}
For each value of the azimuthal number $m$, we obtain a corresponding one-dimensional eigenvalue problem, resulting in a sequence of such problems:
\begin{equation}\label{eigen}
\mathcal{L}_{n, m}\binom{u_m}{v_m}=\omega^2\binom{u_m}{v_m},
\end{equation}
where the operator $\mathcal{L}_{n, m}$ is defined by 
\begin{equation}\label{Lmn}
    \mathcal{L}_{n,m}= \left( \begin{matrix}
	-\nabla _{r}^{2}+\mathfrak{B}_n\left( r \right) \frac{d }{d r}+\frac{m^2}{r^2}+\mathfrak{C}_n\left( r \right) +\mathfrak{D}_n\left( r \right)&		m\mathfrak{A}_n\left( r \right)\\
	m\mathfrak{A}_n\left( r \right)&		-\nabla _{r}^{2}+\mathfrak{B}_n\left( r \right) \frac{d }{d r}+\frac{m^2}{r^2}+\mathfrak{C}_n\left( r \right) -\mathfrak{D}_n\left( r \right)\\
\end{matrix} \right) 
\end{equation}
and
\begin{equation*}
    \begin{aligned}
\mathfrak{A}_n\left( r \right) &=\frac{4n}{r^2}\frac{1}{2-\Phi _{n}^{2}},\\
\mathfrak{B}_n\left( r \right) &=-\frac{2\Phi _n\Phi'_n}{2-\Phi _{n}^{2}},\\
\mathfrak{C}_n\left( r \right) &=\frac{n^2}{r^2}\frac{4}{\left( 2-\Phi _{n}^{2} \right) ^2}-\frac{\left( \Phi'_n \right) ^2\Phi _{n}^{2}}{\left( 2-\Phi _{n}^{2} \right) ^2}-\frac{3}{2}\Phi _{n}^{4}+3\Phi _{n}^{2}-1,\\
\mathfrak{D}_n\left( r \right) &=\frac{n^2}{r^2}\frac{2\Phi _{n}^{2}}{\left( 2-\Phi _{n}^{2} \right) ^2}-\frac{2\left( \Phi'_n \right) ^2}{\left( 2-\Phi _{n}^{2} \right) ^2}-\Phi _{n}^{4}+\frac{3}{2}\Phi _{n}^{2}.
    \end{aligned}
\end{equation*}
The imaginary parts of the Fourier coefficients $\phi_m$ satisfy the same eigenvalue problem \eqref{eigen} with $\mathcal{L}_{n,m}$ as in \eqref{Lmn}, where the eigenfunctions $(u_m,v_m)$ are defined by 
\begin{equation}\label{uv2}
    u_m(r)=b_m-b_{-m},\quad v_m(r)=b_m+b_{-m}.
\end{equation}
Keep in mind that the eigenfunctions $(u_m,v_m)$ depend on the vorticity $n$.

\begin{remark}\label{rmk1}
By the definition of \eqref{Lmn}, if $(u_m,v_m)$ 
is an eigenvector of $\mathcal{L}_{n,m}$ with eigenvalue $\omega^2$, then $(u_m,-v_m)$ is an eigenvector of $\mathcal{L}_{n,-m}$ with the same eigenvalue. Hence, it is sufficient to consider only the case where $m\geq  0$.
\end{remark}

Assume $\mathcal{L}_2\phi=0$, where the operator $\mathcal{L}_2$ is given by \eqref{Ln}. Recall the Fourier expansion in \eqref{fourier}.
For the Fourier mode $m=0$, corresponding to the $U(1)$-invariance of CSG2:
\begin{equation*}
e^{-2i \theta}\left.\frac{\partial}{\partial \alpha} \left[\Phi_2(r) e^{2i (\theta+\alpha)}\right]\right|_{\alpha=0}=2i \Phi_2(r)=i b_{0},
\end{equation*}
the kernel of $\mathcal{L}_{2,0}$, i.e. $\mathrm{Ker}\mathcal{L}_{2,0}$, contains the element $(u_0,v_0)^T=(0,4\Phi_2)^T$ by the relation \eqref{uv2}.
Similarly, consider the kernels generated by translational invariance (correspond to the Fourier mode $m=\pm1$):
\begin{equation*}
    \begin{aligned}
       e^{-2i \theta}\partial_{x}\left[\Phi_2(r)e^{2i \theta}\right]&=\frac{1}{2}\left(\Phi'_2(r)+\frac{2}{r}\Phi_2(r)\right) e^{-i \theta}+\frac{1}{2}\left(\Phi'_2(r)-\frac{2}{r}\Phi_2(r)\right)e^{i \theta}\\
       &=a_{-1} e^{-i \theta}+a_1  e^{i \theta},\\[0.5em]
       e^{-2i \theta}\partial_{y}\left[\Phi_2(r)e^{2i \theta}\right]&=\frac{i }{2}\left(\Phi'_2(r)+\frac{2}{r}\Phi_2(r)\right)e^{-i \theta}-\frac{i }{2}\left(\Phi'_2(r)-\frac{2}{r}\Phi_2(r)\right)e^{i \theta}\\
       &=i b_{-1}e^{-i \theta}+i b_{1}e^{i \theta}.
    \end{aligned}
\end{equation*}
Then by the relations \eqref{uv1} and \eqref{uv2}, we find that these kernels correspond, respectively, to the elements $(u_1,v_1)^T=(\Phi'_2,-(2/r)\Phi_2)^T\in\mathrm{Ker}\mathcal{L}_{2,1}$ and $(u_{-1},v_{-1})^T=(\Phi'_2,(2/r)\Phi_2)^T\in\mathrm{Ker}\mathcal{L}_{2,-1}$.

The following lemma highlights a fundamental distinction between degree-2 vortex solutions of the CSG2 equation and those of the Ginzburg-Landau equation, and suggests the existence of noncoaxial multivortex solutions for the CSG2 equation. 
\begin{lemma}\label{m3}
    At the Fourier mode $m=\pm 3$, the degree-2 vortex solution $\Phi_2$ admits a nontrivial kernel $(u_{\pm 3},v_{\pm 3})\in \mathrm{Ker}\mathcal{L}_{2,\pm 3}$, where 
    $$
u_{\pm 3}=\frac{-24r\left(r^6+32 r^4+384 r^2+1536 \right)}{\left( P_0 \right) ^{3/2}},\quad
v_{\pm 3}=\frac{\pm r}{\sqrt{P_0}}.
$$
\end{lemma}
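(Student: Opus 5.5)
The claim is an explicit identity, so the plan is direct substitution into the $2\times 2$ system $\mathcal{L}_{2,3}(u_3,v_3)^T=0$, with $\mathcal{L}_{2,3}$ the operator \eqref{Lmn} at $n=2$, $m=3$. The case $m=-3$ then follows from Remark~\ref{rmk1}: $(u_3,-v_3)\in\mathrm{Ker}\,\mathcal{L}_{2,-3}$, which is exactly the stated sign pattern $v_{\pm3}=\pm r/\sqrt{P_0}$.

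First I will put all coefficients in rational form in $r$. With $Q_2=\Phi_2^2=r^4(r^2+24)^2/P_0$ we have
\[
2-\Phi_2^2=\frac{2P_0-r^4(r^2+24)^2}{P_0},
\]
and the numerator is a polynomial of degree $8$. Also
\[
\Phi_2'\Phi_2=\tfrac12 Q_2',\qquad (\Phi_2')^2=\frac{(Q_2')^2}{4Q_2},
\]
so $\mathfrak{B}_2$, $(\Phi_2')^2\Phi_2^2$ and $(\Phi_2')^2$ are rational functions of $r$; the last has the factor $r^4(r^2+24)^2$ in its denominator, which cancels against $(Q_2')^2$. Hence $\mathfrak{A}_2,\mathfrak{B}_2,\mathfrak{C}_2,\mathfrak{D}_2$ are explicit rational functions. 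It is convenient to use $s=r^2$ and to write $u_3=r\,U(s)/P_0^{3/2}$ and $v_3=r\,V(s)/P_0^{3/2}$ with
\[
U=-24\,(s^3+32s^2+384s+1536),\qquad V=P_0 .
\]
For any function $w=r\,G(s)P_0^{-3/2}$, the quantity $-\nabla_r^2 w+\frac{9}{r^2}w$ is again $r$ times a rational function of $s$ times $P_0^{-7/2}$. The $9/r^2$ term cancels the singular part of $\nabla_r^2$ acting on the factor $r$, since $\nabla_r^2(r\,g(s))=r\,(8g'+4sg'')$ up to the $9/r^2$ correction. So multiplying each row by $P_0^{7/2}(2-Q_2)^2/r$, with all denominators cleared, reduces each component of the equation to a polynomial identity in $s$.

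Second, I will verify the two polynomial identities, row 1
\[
(-\nabla_r^2+\mathfrak{B}_2\partial_r+9/r^2+\mathfrak{C}_2+\mathfrak{D}_2)u_3+3\mathfrak{A}_2v_3=0
\]
and row 2 analogously with $\mathfrak{C}_2-\mathfrak{D}_2$, by expanding and comparing coefficients; in practice this is done by computer algebra. Before the full expansion, I will use the asymptotics as consistency checks. Near $r=0$, $u_3\sim -24\cdot1536\,r/P_0(0)^{3/2}$ and $v_3\sim r/\sqrt{P_0(0)}$, both of which are $O(r)$. This is compatible with mode $3$ relative to the phase $e^{2i\theta}$: by \eqref{uv1}, $u_3,v_3$ combine $a_3$ and $a_{-3}$, and the $e^{-i\theta}$-type component, i.e.\ total angular momentum $-1$, allows a term linear in $r$. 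As $r\to\infty$, $v_3\sim r^{-3}$ and $u_3\sim -24r^{-5}$, so the kernel element is bounded and decaying. Matching the leading orders at both ends fixes the relative constant $-24$ and checks the signs in $\mathfrak{A}_2$.

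The main obstacle is the bulk algebra. The cleared polynomials have degree around $30$ in $s$, and the square roots must be handled correctly, including showing that $(\Phi_2')^2$ is genuinely rational after cancellation. To keep this manageable, I would factor $2-Q_2$: I expect $2P_0-r^4(r^2+24)^2$ to factor nicely, probably as $P_0+$ something like $(r^2\cdot\text{quartic})$. I would also simplify $\mathfrak{C}_2\pm\mathfrak{D}_2$ before substituting, using the ODE satisfied by $\Phi_2$ to eliminate $\Phi_2''$ and $(\Phi_2')^2$ where possible.

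As an alternative and cross-check, the kernel can be derived from the exact family in Theorem~\ref{thm} by computing $\partial_\alpha\Psi_{2,\alpha}|_{\alpha=0}$ (or the $\mu$-derivative of $\widetilde\Psi_{2,\mu}$ at $\mu=0$). This yields a bounded kernel element of the linearization supported in modes $\pm3$ relative to $e^{2i\theta}$. Under \eqref{uv1}–\eqref{uv2} it should reproduce $(u_{\pm3},v_{\pm3})$ up to a constant: $\partial_\mu$ of $W_\mu/\sqrt{Q_\mu}$ gives $-2\bar z/\sqrt{P_0}+z^2(s+24)\,\bar z^3(\cdots)/P_0^{3/2}$. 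This route is cleaner, but it relies on Theorem~\ref{thm}, whose proof comes later. So the direct substitution remains the primary proof.
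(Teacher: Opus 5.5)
Your proposal takes the same approach as the paper, whose proof is a direct symbolic check of $\mathcal{L}_{2,3}(u_3,v_3)^T=0$ in \textit{Mathematica}; your reduction of both rows to polynomial identities in $s=r^2$, the use of Remark~\ref{rmk1} for $m=-3$, and your endpoint checks (consistent with Lemmas~\ref{asy1}--\ref{asy2}) are all sound. One bookkeeping correction: $-\nabla_r^2(rg)+\tfrac{9}{r^2}\,rg=-r(8g'+4sg'')+\tfrac{8g}{r}$, so the $9/r^2$ term does not remove the $1/r$ singularity by itself; it cancels only together with the $4/r^2$ part of $\mathfrak{C}_2$ and the $12/r^2$ part of $3\mathfrak{A}_2$ near $r=0$, using $u_3+v_3=O(r^5)$, and this does not affect the computer-algebra verification.
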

\begin{proof}
    This can be verified directly using the numerical computation software \textit{Mathematica}.  
\end{proof}
\begin{remark}
    Here, we focus on the main idea behind the discovery of the family of nontrivial kernels in Lemma \ref{m3}. First, finite-difference computations indicate that a kernel is very likely to exist when $m=3$. Motivated by the explicit forms of the rotational and translational kernels, we conjecture that the denominator of $(u_3,v_3)$ should also contain the factor $\sqrt{P_0}$, while the remaining part should be rational.
By expanding the coefficients in \eqref{Lmn} for $n=2$ and $m=3$, after multiplying both sides by $r^2$ to avoid the singularity at the origin, together with $(u_3,v_3)$ into power series at the origin
$$
u_3=\sum_{k=0}^{\infty} \mathbf{u}_k r^k,\quad v_3=\sum_{k=0}^{\infty} \mathbf{v}_k r^k,  
$$
we can solve recursively for the coefficients $\mathbf{u}_k$ and $\mathbf{v}_k$. We find these coefficients depend on two parameters, $\mathbf{u}_1$ and $\mathbf{u}_5$, which correspond precisely to the two bounded kernel elements of $\mathcal{L}_{2,3}$ near the origin (see Lemma \ref{asy1}). Taking $\mathbf{u}_1=-1$, the left-hand side of \eqref{Lmn} depends only on the parameter $\mathbf{u}_5$. Solving then gives $\mathbf{u}_5=1/384$, for which the equation is satisfied. In this way, we obtain the Taylor coefficients of the nontrivial kernel $(u_3,v_3)$ at the origin. It is enough, however, to retain only finitely many terms in the expansion, for instance up to order 20.
We then use Pad\'e approximation to identify the rational part of $(u_3,v_3)$. It should be noted that this step also requires the asymptotic behavior of the kernel of $\mathcal{L}_{2,3}$ at infinity (see Lemma \ref{asy2}) in order to prescribe the degrees of the numerator and denominator of the rational function. Some numerical experimentation is needed here. We believe that this strategy may be useful for finding nontrivial kernels of the linearized CSG2 operator at other radial vortex solutions $\Psi_n$, where $n\geq 3$.
\end{remark}

By \eqref{uv1}, \eqref{uv2} and Lemma \ref{m3}, we set 
$$
a_3=\frac{u_3+v_3}{2},\quad a_{-3}=\frac{u_3-v_3}{2},\quad b_3=\frac{u_3+v_3}{2},\quad b_{-3}=\frac{v_3-u_3}{2},
$$
and define
\begin{equation}\label{eta+}
    \begin{aligned}
\eta_+ &=\left[ \left( a_3+i b_3 \right) e^{3i \theta}+\left( a_{-3}+i b_{-3} \right) e^{-3i \theta} \right] e^{2i \theta}\\
&=\frac{1+i }{2}\left( u_3+v_3 \right) e^{5i \theta}+\frac{1-i }{2}\left( u_3-v_3 \right) e^{-i \theta}\\
&=\frac{1+i }{2}\cdot\frac{r^5\left( 16+r^2 \right) \left( 24+r^2 \right)}{\left( P_0 \right) ^{3/2}}e^{5i \theta}\\
&-\frac{1-i }{2}\cdot\frac{r\left(r^8+88r^6+1920r^4+18432r^2+73728\right)}{\left( P_0 \right) ^{3/2}}e^{-i \theta}.
    \end{aligned}
\end{equation}
Denote by
$$
\eta_1= \left( \frac{u_3+v_3}{2} e^{3i \theta}+ \frac{u_3-v_3}{2}e^{-3i \theta} \right) e^{2i \theta},\quad \eta_2= i\left( \frac{u_3+v_3}{2} e^{3i \theta}+ \frac{v_3-u_3}{2}e^{-3i \theta} \right) e^{2i \theta}.
$$
Then 
$$
\eta_+=\eta_1+\eta_2\in \mathrm{Ker}\mathcal{L}_2.
$$
Note that if we use $(u_{-3},v_{-3})$ to represent the corresponding $a_{\pm 3}$ and $b_{\pm 3}$ by \eqref{uv1} and \eqref{uv2}, then 
$$
a_3=\frac{u_{-3}-v_{-3}}{2},\quad a_{-3}=\frac{u_{-3}+v_{-3}}{2},\quad b_3=\frac{v_{-3}-u_{-3}}{2},\quad b_{-3}=\frac{u_{-3}+v_{-3}}{2}.
$$
Recall that $u_3=u_{-3}$ and $v_{3}=-v_{-3}$.
Hence, we can obtain another element in the kernel of $\mathcal{L}_2$ that is linearly independent of $\eta_+$, namely,
\begin{equation}\label{eta-}
    \begin{aligned}
        \eta_-&=\frac{1-i }{2}\cdot\frac{r^5\left( 16+r^2 \right) \left( 24+r^2 \right)}{\left( P_0 \right) ^{3/2}}e^{5i \theta}\\
&-\frac{1+i }{2}\cdot\frac{r\left(r^8+88r^6+1920r^4+18432r^2+73728\right)}{\left( P_0 \right) ^{3/2}}e^{-i \theta}\\
&=\eta_1-\eta_2\in \mathrm{Ker}\mathcal{L}_2.
    \end{aligned}
\end{equation}
We now examine the properties of $\Phi_2$ under a small perturbation associated with $\eta_+$. Let us take $n=2,\varepsilon=0.1,\omega=0,\phi=\eta_+$ in \eqref{purt} and denote the corresponding deformation by $\psi(z;\eta_+)$. Figures \ref{fig1} and \ref{fig2} illustrate the profile of $|\psi(z;\eta_+)|$ and the corresponding distribution of its zeros (here we use polar coordinates for the plots, and hence the portion of the graph along the negative $x$-axis is not displayed). This strongly suggests that $\Phi_2$ belongs to a certain one-parameter family, and indicates the existence of noncoaxial multivortex solutions of the CSG2 equation. 

\begin{figure} [htbp]
    \centering
    \includegraphics[width=0.6\columnwidth, height=0.45\linewidth]{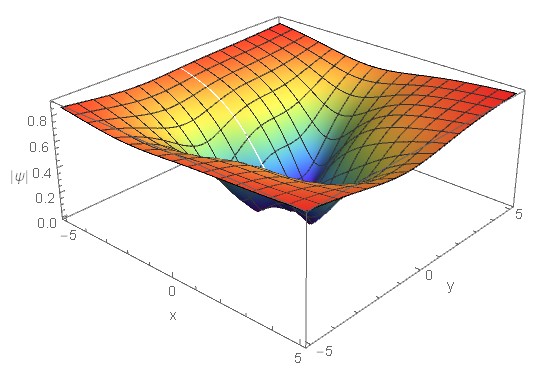}%
    \caption{The graph of $|\psi(z;\eta_+)|$.}
    \label{fig1}
\end{figure}

\begin{figure} [htbp]
    \centering
    \includegraphics[width=0.45\columnwidth, height=0.45\linewidth]{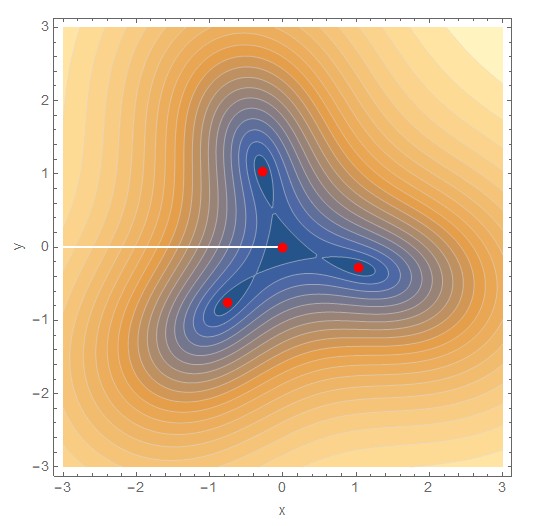}%
    \caption{The contour plot of $|\psi(z;\eta_+)|$. Note that the zeros of $|\psi(z;\eta_+)|$ form an equilateral triangular configuration.}
    \label{fig2}
\end{figure}

\bigskip
\section{A Family of Degree-2 Solutions to the complex sine-Gordon II Equation}\label{section3}

In this section, we will outline the strategy for finding the one-parameter family of degree-2 solutions described in Theorem \ref{thm}. 

Now we expect the zeros of the one-parameter family of solutions $\Psi _{2,\alpha}$ (see \eqref{degree2}) are determined by the following system
\begin{equation}\label{system1}
    \left\{ \begin{array}{l}
	x^2-y^2-\alpha x+\alpha y=0,\\
	2xy+\alpha x+\alpha y=0.\\
\end{array} \right. 
\end{equation}
This can be viewed as applying first-order polynomial perturbations to the real and imaginary parts of the factor $z^2$ in the numerator of $\Phi_2$, thereby desingularizing the double zero at $z=0$. It is straightforward to verify that the system \eqref{system1} admits the following four zeros 
$$
z=x+iy=0,\sqrt{2}\alpha e^{-3\pi i/4},\sqrt{2}\alpha e^{-\pi i/12},\sqrt{2}\alpha e^{7\pi i/12},
$$
which correspond precisely to zeros of the perturbation induced by $\eta_+$ for $\Phi_2$ (see Figure \ref{fig2}). This zero configuration will serve as the starting point for determining, via the method of undetermined coefficients, the polynomials corresponding to each order of $\alpha$ in the remaining factors of $\Psi _{2,\alpha}$. However, we must point out that the above choice of zero configuration is not unique; it may differ by an arbitrary complex scaling. For instance, consider the following system
\begin{equation}\label{system2}
    \left\{ \begin{array}{l}
	x^2-y^2+\alpha x=0,\\
	2xy-\alpha y=0.\\
\end{array} \right. 
\end{equation}
Then we find the system \eqref{system2} admits the following four zeros
$$
z=x+iy=0,\alpha e^{-\pi i},\alpha e^{-\pi i/3},\alpha e^{\pi i/3},
$$
which can be viewed as those of the system \eqref{system1} rotated clockwise by $\pi/4$ and scaled by a factor of $1/\sqrt{2}$.

We now use the zero-mode information of $\Psi_2$ to determine the polynomials corresponding to the first-order term in $\alpha$ in the remaining factors of $\Psi _{2,\alpha}$. 
By Lemma \ref{nond2}, there must exist five real constants $c_j,\ j=1,2,3,4,5$, such that
$$
\left.\partial_{\alpha}\Psi _{2,\alpha}\right|_{\alpha=0}=c_1 \partial_x \Psi_2+c_2 \partial_y \Psi_2+c_3 i\Psi_2+c_4 \eta_+ +c_5\eta_-.
$$
In fact, for different zero configurations of $\Psi _{2,\alpha}$ (equilateral triangular configurations centered at the origin), we will show that the only difference lies in the choice of $c_4$ and $c_5$, while $c_1,c_2,c_3$ all vanish. 

By \eqref{degree2}, we compute 
\begin{equation*}
\begin{aligned}
\operatorname{Re}\partial_{\alpha}\Psi_{2,\alpha}
&=
\frac{1}{P_{\alpha}}
\Bigg\{
\sqrt{P_{\alpha}}
\Big[
\partial_{\alpha}\beta_{1,\alpha}
\left(x^2-y^2-\alpha x+\alpha y\right)
+(y-x)\beta_{1,\alpha}
\Big] \\
&\qquad
-\frac{\partial_{\alpha}P_{\alpha}}{2\sqrt{P_{\alpha}}}
\,\beta_{1,\alpha}
\left(x^2-y^2-\alpha x+\alpha y\right)
\Bigg\} \\
&+
\frac{1}{P_{\alpha}}
\Bigg\{
\sqrt{P_{\alpha}}
\Big[
\partial_{\alpha}\beta_{2,\alpha}
\left(2xy+\alpha x+\alpha y\right)
+(x+y)\beta_{2,\alpha}
\Big] \\
&\qquad
-\frac{\partial_{\alpha}P_{\alpha}}{2\sqrt{P_{\alpha}}}
\,\beta_{2,\alpha}
\left(2xy+\alpha x+\alpha y\right)
\Bigg\}.
\end{aligned}
\end{equation*}
Then
\begin{equation*}
    \begin{aligned}
\text{Re}\partial _{\alpha}\Psi _{2,\alpha}|_{\alpha =0}&=\frac{2P_0\left[ \partial _{\alpha}\beta _{1,\alpha}|_{\alpha =0}\left( x^2-y^2 \right) +\left( -x+y \right) \left( r^2+24 \right) \right]}{2\left( P_0 \right) ^{3/2}}\\
&-\frac{\left( r^2+24 \right) \left( x^2-y^2 \right)\partial _{\alpha}P_{\alpha}|_{\alpha =0}}{2\left( P_0 \right) ^{3/2}}+\frac{2P_0\left[ \partial _{\alpha}\beta _{2,\alpha}|_{\alpha =0}\left( 2xy \right) \right]}{2\left( P_0 \right) ^{3/2}}.
    \end{aligned}
\end{equation*}
Similarly, we have 
\begin{equation*}
    \begin{aligned}
\text{Im}\partial _{\alpha}\Psi _{2,\alpha}|_{\alpha =0}&=\frac{2P_0\left[ \partial _{\alpha}\gamma _{1,\alpha}|_{\alpha =0}\left( 2xy \right) +\left( x+y \right) \left( r^2+24 \right) \right]}{2\left( P_0 \right) ^{3/2}}\\
&-\frac{\left( r^2+24 \right) \left( 2xy \right) \partial _{\alpha}P_{\alpha}|_{\alpha =0}}{2\left( P_0 \right) ^{3/2}}+\frac{2P_0\left[ \partial _{\alpha}\gamma _{2,\alpha}|_{\alpha =0}\left( x^2-y^2 \right) \right]}{2\left( P_0 \right) ^{3/2}}.
    \end{aligned}
\end{equation*}
Now let us consider the kernels $\partial_{x}\Psi_2$ and $\partial_{y}\Psi_2$. We
have
\begin{equation*}
    \begin{aligned}
        \partial_{x}\Psi_2 & =\frac{\partial_{x}\left[  \left(  r^{2}+24\right)  \left(
x^{2}-y^{2}+2xyi\right)  \right]  \sqrt{P_{0}}-\left[  \left(  r^{2}
+24\right)  \left(  x^{2}-y^{2}+2xyi\right)  \right]  \frac{\partial_{x}P_{0}
}{2\sqrt{P_{0}}}}{P_{0}}\\
& =\frac{2\left[  2x\left(  x^{2}-y^{2}+2xyi\right)  +\left(  r^{2}+24\right)
\left(  2x+2yi\right)  \right]  P_{0}}{2(P_0)^{3/2}}\\
& -\frac{\left(  r^{2}+24\right)  \left(  x^{2}-y^{2}+2xyi\right)  \left(
4r^{6}+192r^{4}+2304r^{2}+9216\right)  \left(  2x\right)  }{2(P_0)^{3/2}},
    \end{aligned}
\end{equation*}
\begin{equation*}
    \begin{aligned}
        \partial_{y}\Psi_2 & =\frac{\partial_{y}\left[  \left(  r^{2}+24\right)  \left(
x^{2}-y^{2}+2xyi\right)  \right]  \sqrt{P_{0}}-\left[  \left(  r^{2}
+24\right)  \left(  x^{2}-y^{2}+2xyi\right)  \right]  \frac{\partial_{y}P_{0}
}{2\sqrt{P_{0}}}}{P_{0}}\\
& =\frac{2\left[  2y\left(  x^{2}-y^{2}+2xyi\right)  +\left(  r^{2}+24\right)
\left(  -2y+2xi\right)  \right]  P_{0}}{2(P_0)^{3/2}}\\
& -\frac{\left(  r^{2}+24\right)  \left(  x^{2}-y^{2}+2xyi\right)  \left(
4r^{6}+192r^{4}+2304r^{2}+9216\right)  \left(  2y\right)  }{2(P_0)^{3/2}}.
    \end{aligned}
\end{equation*}
As for the kernel $i\Psi_2$, we rewrite 
$$
i\Psi_2=\frac{2P_0\left(  r^{2}+24\right)  \left(-2xy+(x^2-y^2)i\right)  }
{2(P_0)^{3/2}}
$$
Combining the above formulas with the expressions for $\eta_{\pm}$ in \eqref{eta+}-\eqref{eta-}, we can solve the following problems using the method of undetermined coefficients (since the numerators on both sides of the equations are required to be real-valued polynomials in $x$ and $y$ and all terms share a common denominator $2(P_0)^{3/2}$):
\begin{equation*}
    \text{Re}\partial _{\alpha}\Psi _{2,\alpha}|_{\alpha =0}=\text{Re}\left(c_{11} \partial_x \Psi_2+c_{21} \partial_y \Psi_2+c_{31} i\Psi_2+c_{41} \eta_+ +c_{51}\eta_-\right),
\end{equation*}
\begin{equation*}
    \text{Im}\partial _{\alpha}\Psi _{2,\alpha}|_{\alpha =0}=\text{Im}\left(c_{12} \partial_x \Psi_2+c_{22} \partial_y \Psi_2+c_{32} i\Psi_2+c_{42} \eta_+ +c_{52}\eta_-\right).
\end{equation*}
Assume that the polynomial functions 
\begin{equation*}
    \partial _{\alpha}P_{\alpha}|_{\alpha =0},\quad \partial _{\alpha}\beta _{1,\alpha}|_{\alpha =0},\quad \partial _{\alpha}\beta _{2,\alpha}|_{\alpha =0},\quad \partial _{\alpha}\gamma _{1,\alpha}|_{\alpha =0},\quad \partial _{\alpha}\gamma _{2,\alpha}|_{\alpha =0},
\end{equation*}
to be determined are of relatively low degree. Substituting their expansions into the above equations, we obtain 
\begin{equation*}
    \begin{aligned}
        \mathcal{P}_1=\partial _{\alpha}P_{\alpha}|_{\alpha =0}=&-48x^5+144x^4y+96x^3y^2-768x^3+96x^2y^3\\
&+2304x^2y+144xy^4+2304xy^2-48y^5-768y^3,
    \end{aligned}
\end{equation*}
\begin{equation}\label{beta-gamma}
    \partial _{\alpha}\beta _{1,\alpha}|_{\alpha =0}=x+y,\quad \partial _{\alpha}\gamma _{1,\alpha}|_{\alpha =0}=-x-y,\quad \partial _{\alpha}\beta _{2,\alpha}|_{\alpha =0}=\partial _{\alpha}\gamma _{2,\alpha}|_{\alpha =0}=-x+y,
\end{equation}
and for the undetermined coefficients on the right-hand side, we find that $c_{41}=c_{42}=-24$, while all other coefficients vanish.
\begin{remark}
    If we modify the factor in $\Psi _{2,\alpha}$ so that it corresponds to the zero configuration of system \eqref{system2}, then the above method of undetermined coefficients yields 
    $$
    \mathcal{P}_1=\partial _{\alpha}P_{\alpha}|_{\alpha =0}=48x^5-96x^3 y^2+768x^3-144 xy^4-2304 xy^2, 
    $$
    $$
\partial _{\alpha}\beta _{1,\alpha}|_{\alpha =0}=-x,\quad \partial _{\alpha}\gamma _{1,\alpha}|_{\alpha =0}=x,\quad \partial _{\alpha}\beta _{2,\alpha}|_{\alpha =0}=\partial _{\alpha}\gamma _{2,\alpha}|_{\alpha =0}=-y,
$$
and $c_{41}=c_{42}=c_{51}=c_{52}=12$, while all other coefficients on the right-hand side vanish. 
\end{remark}

Before proceeding to determine the polynomials corresponding to each order of $\alpha$ in the remaining factors of $\Psi _{2,\alpha}$, we first examine the approximate degree-2 solution $U^*$ constructed from three positively oriented degree-1 vortices and one negatively oriented degree-1 vortex. We set 
\begin{equation*}
    U^*=\overline{\Psi_1(z)}\Psi_1(z-\sqrt{2}\alpha e^{-3\pi i/4})\Psi_1(z-\sqrt{2}\alpha e^{-\pi i/12})\Psi_1(z-\sqrt{2}\alpha e^{7\pi i/12})=\frac{A_{\alpha}}{\sqrt{B_{\alpha}}}.
\end{equation*}
Substituting the expression for $\Phi_1$ and expanding in $\alpha$, we obtain
\begin{equation*}
    A_{\alpha}=x^4-y^4+2(-x+y)\alpha^3+\left[2xyr^2+2(x+y)\alpha^3\right]i,
\end{equation*} 
\begin{equation*}
    \begin{aligned}
B_{\alpha}&=8\alpha ^6\left( r^2+4 \right) +48\alpha ^4\left( r^2+4 \right) +24\alpha ^2\left( r^2+4 \right) ^2+\left( r^2+4 \right) ^4\\
&-4\alpha ^3\left[ x^5-3x^4y-2x^3\left( y^2-2 \right) -2x^2y\left( y^2+6 \right) -3xy^2\left( y^2+4 \right) +y^3\left( y^2+4 \right) \right].
    \end{aligned}
\end{equation*}
We expect that, for sufficiently large $\alpha$, the one-parameter family of solutions $\Psi _{2,\alpha}$ we seek will be close to the approximate solution $U^*$. In particular, we require that the coefficient polynomial of the $\alpha^3$ term in the numerator of \eqref{degree2} coincides with that of the $\alpha^3$ term in $A_{\alpha}$; and the denominator of \eqref{degree2}, $P_{\alpha}$, has its $\alpha^6$ coefficient polynomial matching exactly that of the $\alpha^6$ term in $B_{\alpha}$.
Hence, by \eqref{beta-gamma}, it is natural to impose that $\beta_{1,\alpha},\beta_{2,\alpha},\gamma_{1,\alpha},\gamma_{2,\alpha}$ as in \eqref{bb}-\eqref{gg} and $\mathcal{P}_6=8(r^2+4)$. 
In fact, under this choice, a direct computation shows that the numerator of $\Psi _{2,\alpha}$ differs from $A_{\alpha}$ only by an additional term
\begin{equation*}
    24(x^2-y^2-\alpha x+\alpha y)+24(2xy+\alpha x+\alpha y)i.
\end{equation*}
At the same time, it is precisely the additional $2\alpha^2$ term introduced in $\beta_{1,\alpha}$ and $\gamma_{1,\alpha}$ that ensures the zeros of $\Psi _{2,\alpha}$ retain an equilateral triangular configuration as the parameter $\alpha$ varies. Without this term, or if the coefficient of the $\alpha^2$ term is too small, numerical computations indicate that, for sufficiently large $\alpha$, $\Psi _{2,\alpha}$ develops ten zeros, thereby destroying the equilateral triangular structure of the zero configuration. 

At this stage, it suffices to determine the polynomials corresponding to the $\alpha$-terms of orders two through five in $P_{\alpha}$, i.e. $\mathcal{P}_2,\cdots,\mathcal{P}_5$. To this end, we consider the system formed by the sum of the squares of the real and imaginary parts of $\Psi _{2,\alpha}$, in which each term is a rational function. More precisely, we set 
$$
\Psi _{2,\alpha}=u+iv,\quad g=u^2,\quad h=v^2.
$$
Note that 
$$
\nabla u=\frac{\nabla g}{2\sqrt{g}},\quad \nabla v=\frac{\nabla h}{2\sqrt{h}},\quad 
$$
$$
\Delta u=\frac{\Delta g}{2\sqrt{g}}-\frac{1}{4g^{3/2}}|\nabla g|^2,\quad \Delta v=\frac{\Delta h}{2\sqrt{h}}-\frac{1}{4h^{3/2}}|\nabla h|^2.
$$
Substitute these into Eq.~\eqref{csg2}, and consider the real and imaginary parts separately to obtain
\begin{equation}\label{ration1}
    \begin{aligned}
4gh&\left( 2-g-h \right) \Delta g-2h\left( 2-g-h \right) |\nabla g|^2+2g\left( |\nabla g|^2h-|\nabla h|^2g \right)\\
&+4gh\nabla g \nabla h  +4g^2h\left( 1-g-h \right) \left( 2-g-h \right) ^2=0,
    \end{aligned}
\end{equation}
\begin{equation}\label{ration2}
    \begin{aligned}
4gh&\left( 2-g-h \right) \Delta h-2g\left( 2-g-h \right) |\nabla h|^2-2h\left( |\nabla g|^2h-|\nabla h|^2g \right) \\
&+4gh\nabla g \nabla h  +4gh^2\left( 1-g-h \right) \left( 2-g-h \right) ^2=0.
    \end{aligned}
\end{equation}
Therefore, the imaginary-part equation is obtained from the real-part equation simply by interchanging $g$ and $h$. Based on the previous discussion, we further express $g$ and $h$ in the following rational function form
$$
g=\frac{G}{P},\quad h=\frac{H}{P},
$$
where
$$
G=\left[ \beta _{1,\alpha}\left( x^2-y^2-\alpha x+\alpha y \right) +\beta _{2,\alpha}\left( 2xy+\alpha x+\alpha y \right) \right] ^2,
$$
$$
H=\left[ \gamma _{1,\alpha}\left( 2xy+\alpha x+\alpha y \right) +\gamma _{2,\alpha}\left( x^2-y^2-\alpha x+\alpha y \right) \right] ^2,
$$
and $P=P_{\alpha}$. For simplicity, we omit the subscript $\alpha$. Note that
$$
\partial _xg=\frac{\partial _xGP-G\partial _xP}{P^2},\quad \partial^2_{x}  g=\frac{\left( \partial^2_{x}  GP-G\partial^2_{x}  P \right) P-2\left( \partial _xGP-G\partial _xP \right) \partial _xP}{P^3}.
$$
Similarly, we can compute $\partial_y g, \partial^2_{y}  g$ and the corresponding derivatives of $h$.
Substituting these rational forms into Eq.~\eqref{ration1}, we find that all terms share a common denominator $P^6$. Multiplying both sides of the equation by $P^6$, we find Eq.~\eqref{ration1} can be transformed into the following form: 
\begin{equation}\label{ration3}
\begin{aligned}
&\quad 4GH\left( 2P-G-H \right) \left[ \left( \partial^2_{x}   G P-G\partial^2_{x}   P  \right) P-2\left( \partial_x G P-G\partial_x P \right) \partial_x P \right. \\
&\qquad\left. +\left( \partial^2_{y}   G P-G\partial^2_{y}   P  \right) P-2\left( \partial_y G P-G\partial_y P  \right) \partial_y P  \right] \\
&\quad+\left( -4HP+4GH+2H^2 \right) \left[ \left( \partial_x G P-G\partial_x P \right) ^2+\left( \partial_y G P-G\partial_y P  \right) ^2 \right] \\
&\quad-2G^2\left[ \left( \partial_x H P-H\partial_x P \right) ^2+\left( \partial_y H P-H\partial_y P  \right) ^2 \right] \\
&\quad+4GH\left[ \left( \partial_x G P-G\partial_x P \right) \left( \partial_x H P-H\partial_x P \right) +\left( \partial_y G P-G\partial_y P  \right) \left( \partial_y H P-H\partial_y P  \right) \right] \\
&\quad+4G^2H\left( P-G-H \right) \left( 2P-G-H \right) ^2=0.
\end{aligned}
\end{equation}
Interchanging $G$ and $H$ in the above equation yields the reduced form of Eq.~\eqref{ration2}.
We have now transformed the real and imaginary parts into a system of equations expressed as products of polynomials, where $G$ and $H$ are known polynomials, and $P$ is the polynomial to be determined. It is worth noting that the coefficient polynomials $\mathcal{P}_0$, $\mathcal{P}_1$ and $\mathcal{P}_6$ have already been fixed.
By requiring that the coefficient polynomials of each order in $\alpha$ vanish, we can successively determine $\{\mathcal{P}_j\}$ from $\mathcal{P}_5$ to $\mathcal{P}_2$ using the method of undetermined coefficients. In fact, in the form of Eq.~\eqref{ration3}, when we require the coefficient polynomials to vanish successively from the highest powers of $\alpha$ down to the lower ones, we avoid nonlinear coupling among the undetermined parameters, making the method of undetermined coefficients feasible in principle.
A useful observation is that the coefficient polynomial of the $O(\alpha^{35})$ term on the left-hand side of Eq.~\eqref{ration3} depends only on $\mathcal{P}_5$. We may therefore set 
$\mathcal{P}_5=0$ directly. Next, by requiring the coefficient polynomials of the $O(\alpha^k)$ terms, with $k=34,33,32$, to vanish successively, we can determine 
$\mathcal{P}_4,\mathcal{P}_3,\mathcal{P}_2$
in turn. Substituting the resulting polynomial 
$P$ back into the left-hand side of Eq.~\eqref{ration3}, one then finds that all remaining coefficients in the expansion in powers of 
$\alpha$ vanish.
This constitutes our main algorithm. 
Thus, we have obtained the one-parameter family of solutions $\Psi_{2,\alpha}$ stated in Theorem \ref{thm}. In the next section, we will further verify, using Hirota's bilinear derivatives, that $\Psi_{2,\alpha}$ indeed satisfies the CSG2 equation.

\bigskip
\section{The Hirota bilinear derivative method}\label{section4}
\subsection{The Getmanov bilinear system}\label{subsection4.1}

Let us consider the new function $\eta=\psi^2$, where $\psi$ solves CSG2. Then we compute $\nabla \eta=2\psi\nabla\psi$ and
$$
\begin{aligned}
\Delta \eta & =2 \psi \Delta \psi+2(\nabla \psi)^2 \\
& =-2 \psi\left[\frac{\bar{\psi}(\nabla \psi)^2}{2-|\psi|^2}+\frac{1}{2} \psi\left(1-|\psi|^2\right)\left(2-|\psi|^2\right)\right]+2(\nabla \psi)^2 \\
& =-\frac{1}{2} \frac{|\eta|(\nabla \eta)^2}{(2-|\eta|) \eta}+\frac{1}{2} \frac{(\nabla \eta)^2}{\eta}-\eta(1-|\eta|)(2-|\eta|) \\
& =\frac{(\nabla \eta)^2}{\eta} \frac{1-|\eta|}{2-|\eta|}-\eta(1-|\eta|)(2-|\eta|) .
\end{aligned}
$$
Therefore, the function $\eta$ satisfies the following equation 
\begin{equation}\label{eta}
    E(\eta):=\Delta \eta-\frac{\bar{\eta}(\nabla \eta)^2}{|\eta|^2} \frac{1-|\eta|}{2-|\eta|}+\eta(1-|\eta|)(2-|\eta|)=0.
\end{equation}
Replacing $\eta$ with $2\xi$ in the above equation, we recover Eq.~(3.2) in \cite{G80}, where $m^2=1/2$. 

For any complex-valued functions $f(z)=f(x,y),g(z)=g(x,y)$, let us introduce the Hirota bilinear operator D defined by 
$$
Df\cdot g=\nabla fg-f\nabla g,\quad D^2 f\cdot g=\Delta fg-2\nabla f\nabla g+f\Delta g.
$$
Now we assume that $\eta=g/f$, where $g$ is complex-valued and $f$ is a real-valued positive function. We compute 
\begin{equation*}
    \begin{aligned}
f^3\Delta \left( \frac{g}{f} \right) &=f^3\frac{f^2\left( f\Delta g-g\Delta f \right) -2f^2\nabla f\nabla g+2fg|\nabla f|^2}{f^4}\\
&=f^2\Delta g-fg\Delta f-2f\nabla f\nabla g+2g|\nabla f|^2\\
&=fD^2g\cdot f-gD^2f\cdot f,
    \end{aligned}
\end{equation*} 
\begin{equation*}
    \begin{aligned}
f^4\left( \nabla \left( \frac{g}{f} \right) \right) ^2&=f^2(\nabla g)^2+g^2(\nabla f)^2-2fg\nabla f\nabla g\\
&=\frac{g}{2}\left[ -fg\Delta f-2f\nabla f\nabla g+f^2\Delta g+2g(\nabla f)^2 \right] \\
&+\frac{f}{2}\left[ -fg\Delta g-2g\nabla g\nabla f+g^2\Delta f+2f(\nabla g)^2 \right] \\
&=\frac{g}{2}\left[ fD^2g\cdot f-gD^2f\cdot f \right] +\frac{f}{2}\left[ gD^2f\cdot g-fD^2g\cdot g \right] \\
&=gfD^2g\cdot f-\frac{1}{2}g^2D^2f\cdot f-\frac{1}{2}f^2D^2g\cdot g.
    \end{aligned}
\end{equation*}
Multiplying both sides of Eq.~\eqref{eta} by $f^3$
and substituting the above expressions, we obtain
\begin{equation}\label{LHS}
    \begin{aligned}
        \text{LHS}&=\left[ -gD^2f\cdot f+\frac{g}{2}\frac{f-|g|}{2f-|g|}D^2f\cdot f+g\left( f-|g| \right) \left( 2f-|g| \right) \right]\\
&+\left[ fD^2g\cdot f-f\frac{f-|g|}{2f-|g|}D^2g\cdot f+\frac{1}{2}\frac{\bar{g}}{|g|^2}\frac{f-|g|}{2f-|g|}f^2D^2g\cdot g \right]\\
&=\frac{g\left( |g|-3f \right)}{2\left( 2f-|g| \right)}\left[ D^2f\cdot f-2|g|\left( |g|-2f \right) \right]\\
&+\frac{\bar{g}}{|g|^2}\frac{f^2}{2f-|g|}\left[ gD^2g\cdot f+\frac{1}{2}\left( f-|g| \right) D^2g\cdot g+4g^2f-2g^2|g| \right] \\
&-g\left( |g|-3f \right) |g|+g\left( f-|g| \right) \left( 2f-|g| \right) +\frac{2f^2g|g|}{2f-|g|}-\frac{4gf^3}{2f-|g|}.
    \end{aligned}
\end{equation}
It is easy to verify that the last line on the right-hand side of the above equality vanishes. Therefore, in order to find $\eta$ satisfying Eq.~\eqref{eta}, it suffices to solve
\begin{equation}\label{system3}
    \left\{ \begin{array}{l}
	D^2f\cdot f=2|g|\left( |g|-2f \right) ,\\[0.5em]
	g\left( D^2+4 \right) g\cdot f=2g^2|g|-\frac{1}{2}\left( f-|g| \right) D^2g\cdot g.\\
\end{array} \right. 
\end{equation}
Note that the second equation in the above system is trilinear in the functions $g$ and $f$. However, we can introduce a new complex-valued function $h$ satisfying the following relation
$$
D^2g\cdot g=2gh,
$$
thereby obtaining a system of equations in the following bilinear form
\begin{equation}\label{system4}
    \left\{ \begin{array}{l}
	D^2f\cdot f=2|g|\left( |g|-2f \right) ,\\[0.5em]
	D^2g\cdot g=2gh,\\[0.5em]
	\left( D^2+4 \right) g\cdot f=2g|g|-h\left( f-|g| \right) .\\
\end{array} \right. 
\end{equation}
This is the system proposed by Getmanov \cite{G77, G80}.
Note that the reduced system above provides only a sufficient condition for a solution to the corresponding CSG2 equation. As in \cite[Section 5]{G80}, one may expand the functions $g,f,h$ into power series in the parameter $\alpha$, thereby obtaining the so-called soliton solutions of the CSG2 equation by \eqref{system4}. Unfortunately, it appears not possible to recover the previously obtained one-parameter family of solutions $\Psi _{2,\alpha}$ from this bilinear formulation, since it does not satisfy system \eqref{system3} (this can be readily verified using \textit{Mathematica}), where we take 

\begin{equation*}
    \begin{aligned}
g&=\Bigg\{
\left[ \beta _{1,\alpha}\left( x^2-y^2-\alpha x+\alpha y \right)
+\beta _{2,\alpha}\left( 2xy+\alpha x+\alpha y \right) \right] \\
&\quad + i\left[
\gamma _{1,\alpha}\left( 2xy+\alpha x+\alpha y \right)
+\gamma _{2,\alpha}\left( x^2-y^2-\alpha x+\alpha y \right) \right]
\Bigg\}^2,\\
f&=P_{\alpha}=\sum_{k=0}^6{\alpha ^k\mathcal{P}_k}.
    \end{aligned}
\end{equation*}
Meanwhile, substituting the above $g$ and $f$ into \eqref{LHS}, one readily verifies that it vanishes. 

\subsection{A modified bilinear structure}

It turns out that our solutions satisfy a new bilinear system different from the above mentioned Getmanov system. Let us explain this in more details.

We first point out that explicit expression of our solution suggests that it will be more convenient to work with the $z,\bar z$ coordinates. Note that 
$$
D_z f\cdot g=f_zg-fg_z,\quad 
D_{\bar z}f\cdot g=f_{\bar z}g-fg_{\bar z},
$$
and the relation to the first-order Hirota derivatives in real variables is given by 
$$
D_z=\frac{1}{2}(D_x-iD_y),\quad D_{\bar z}=\frac{1}{2}(D_x+iD_y),
$$
where 
$$
D_xf\cdot g=f_xg-fg_x,\quad D_yf\cdot g=f_yg-fg_y.
$$
We also have 
\[
    D^2 f\cdot g
    =4\left(
    f_{z\bar z}g+f g_{z\bar z}
    -f_zg_{\bar z}-f_{\bar z}g_z
    \right).
\]

For $\eta=\psi^2=g/f$, we propose the following new system of bilinear system: 
\begin{equation}\label{system5}
    \left\{
    \begin{aligned}
        D^2 f\cdot f
        &=2(f-|g|)^2,\\[0.5em]
        D^2 g\cdot g
        &=2gh,\\[0.5em]
        D^2 g\cdot f
        &=-(f-|g|)(g+h).
    \end{aligned}
    \right.
\end{equation}
Combining \eqref{LHS} and \eqref{system5},  a direct algebraic reduction gives
\begin{equation*}
    \begin{aligned}
        f^3E(\eta)=&\frac{g(|g|-3f)}{2(2f-|g|)}\left[2(f-|g|)^2-2|g|(|g|-2f)\right]\\
        &+\frac{\bar g f^2}{|g|^2(2f-|g|)}\left[-g(f-|g|)(g+h)+(f-|g|)gh+4g^2f-2g^2|g|\right]\\
        =&\frac{g f^2(|g|-3f)}{2f-|g|}+\frac{g f^2(3f-|g|)}{2f-|g|}\\
        =&0,
    \end{aligned}
\end{equation*}
which implies that \(\eta=g/f\) satisfies the \(\eta\)-form of the CSG2 equation \eqref{eta}. Therefore, as long as $f$ is strictly positive, for any pair $(f,g)$ satisfying the bilinear system \eqref{system5}, the corresponding function $\psi$ is a solution of the original CSG2 equation. In fact, for the multivortex solutions obtained in Theorems \ref{thm} and \ref{thm:degree3-complex-family}, taking the corresponding pair $(f,g)$, direct computations show that they indeed satisfy system \eqref{system5}. This suggests that system \eqref{system5} can provide a general algorithm for multivortex solutions of the CSG2 equation.

\bigskip
\section{Non-degeneracy and stability of vortex solutions}\label{section5}

In this section, we consider the non-degeneracy of $\Psi_1, \Psi_2$ and the stability of $\Psi_1$. A key ingredient is a precise understanding of the asymptotic behavior of the kernel of the linearized CSG2 operator at $\Phi_n$, both near the origin and at infinity. A full resolution of nondegeneracy problem for all solutions need a linearized Backlund transformation type argument.

For the asymptotic analysis, one may follow the contraction-mapping approach in \cite[Chapter 3]{PR12} (developed for Ginzburg-Landau vortex solutions), or alternatively the Frobenius series expansion method used in \cite{A03}. Notably, the latter provides a detailed asymptotic analysis of the kernels for the Ginzburg-Landau, CSG1, and CSG2 vortex solutions. Here, we directly state the corresponding results for the CSG2 case.

We first consider the homogeneous problem
\begin{equation}\label{eigen1}
\mathcal{L}_{n, m}\binom{u_m}{v_m}=0.
\end{equation}

\begin{lemma}[Section 4.2.1 in \cite{A03}]\label{asy1}
    There are four linearly independent solutions 
     $$
Z_{j,m}=\binom{Z_{j1,m}}{Z_{j2,m}},\quad j=1,2,3,4,
     $$
     to Problem \eqref{eigen1} with the following asymptotic behavior as $r\to 0$:
$$
Z_{1,m}=r^{n+m}(1+o(r))\binom{1}{1}; \quad Z_{2,m}=r^{-|n-m|}(1+o(r))\binom{1}{-1};
$$
$$
\begin{aligned}
    Z_{3,m}&=r^{-(n+m)}(1+o(r))\binom{1}{1};\\
    Z_{4,m}&=\left\{ \begin{array}{l}
	r^{|n-m|}\left( 1+o\left( r \right) \right) \left( \begin{array}{c}
	1\\
	-1\\
\end{array} \right), \quad \text{for } m\ne n,\\[1em]
	\log r\cdot \left( 1+o\left( r \right) \right) \left( \begin{array}{c}
	1\\
	-1\\
\end{array} \right), \quad \text{for } m=n.\\
\end{array} \right.
\end{aligned} 
$$
\end{lemma}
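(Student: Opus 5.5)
The approach is a Frobenius analysis at the regular singular point $r=0$. The first step is to expand the coefficients of $\mathcal{L}_{n,m}$ near the origin using $\Phi_n(r)=c_n r^n(1+O(r^2))$ with $c_n>0$; for $n=1$ this is \eqref{asy}, and in general it follows from the explicit Olver--Barashenkov formula. From this we get $\Phi_n^2=O(r^{2n})$, $\Phi_n\Phi_n'=O(r^{2n-1})$ and $(\Phi_n')^2\Phi_n^2=O(r^{4n-2})$. Consequently
\[
\mathfrak{A}_n=\frac{2n}{r^2}+O(r^{2n-2}),\qquad \mathfrak{C}_n=\frac{n^2}{r^2}+O(1),\qquad \mathfrak{D}_n=O(1),\qquad r\,\mathfrak{B}_n=O(r^{2n}),
\]
where the $O(1)$ remainders are analytic in $r^2$. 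Multiplying \eqref{eigen1} by $r^2$ then gives a system $r^2Y''+rY'-M_0Y=R(r)(rY',Y)$. Its leading matrix is
\[
M_0=\begin{pmatrix} m^2+n^2 & 2nm\\ 2nm & m^2+n^2\end{pmatrix},
\]
and $R=O(r^2)$ is analytic in $r^2$. The origin is therefore a regular singular point.

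The second step is to diagonalize $M_0$. Its eigenvector $(1,1)^T$ has eigenvalue $(n+m)^2$, and its eigenvector $(1,-1)^T$ has eigenvalue $(n-m)^2$. The indicial equation $\rho^2=\lambda$ then gives the exponents $\pm(n+m)$ along $(1,1)^T$ and $\pm|n-m|$ along $(1,-1)^T$, which are exactly the four leading behaviours claimed. For each exponent I will construct a solution of the form $r^\rho\sum_{k\ge0}Y_k r^{2k}$, with $Y_0$ the corresponding eigenvector. The recursion is $\big((\rho+2k)^2-M_0\big)Y_k=(\text{terms in }Y_0,\dots,Y_{k-1})$, which is solvable whenever $(\rho+2k)^2$ is not an eigenvalue of $M_0$. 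This is automatic for the largest exponent in each direction, and for the dominant exponent overall, $|n+m|$ or $|n-m|$. It yields $Z_{1,m}$ and $Z_{4,m}$ for $m\neq n$, each with $1+o(r)$ corrections, in fact $1+O(r^2)$. Convergence follows from the standard Frobenius majorant argument, since the coefficients are analytic in $r^2$ on a disc whose radius is set by the nearest complex zero of $P_n$ and of $2-\Phi_n^2$.

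The third step, which I expect to be the main obstacle, concerns the negative exponents $-(n+m)$ and $-|n-m|$. There the recursion can hit a resonance: $(\rho+2k)^2$ equals another indicial value whenever the exponents differ by an even integer, and exponent differences such as $2n$, $2m$ or $2|n\pm m|$ are always even here. At a resonance one generally needs a logarithmic term $\log r\cdot r^{\rho'}$. The plan is to use the reduction of order / variation of parameters, also called the Volterra integral equation approach of \cite[Chapter 3]{PR12}. Given the regular solutions, I will write the singular ones as solutions of the integral equation
\[
Y=r^\rho Y_0+\int K(r,s)R(s)Y(s)\,ds,
\]
with the kernel built from the four model solutions $r^{\pm(n+m)}(1,1)^T$ and $r^{\pm|n-m|}(1,-1)^T$. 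A contraction on $(0,r_0)$ in the weighted norm $\sup r^{-\rho}|Y|$ will then be established. Any logarithmic contributions enter at relative order $r^{2k}\log r$ with $k\ge1$ and are therefore $o(r)$ relative to the leading term, so the stated form $r^\rho(1+o(r))Y_0$ survives. The case $m=n$ gives the double exponent $0$ in the $(1,-1)^T$ direction. This produces the standard second solution $\log r\,(1,-1)^T(1+o(r))$, which is handled by the same integral equation with the model pair $\{1,\log r\}$.

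Finally, linear independence follows because the four leading terms are distinct: they have distinct powers, or a $\log$ factor, in the two independent eigendirections. The Wronskian-type determinant of the leading terms is nonzero for small $r$, so $Z_{1,m},\dots,Z_{4,m}$ span the four-dimensional solution space of \eqref{eigen1}. The one case to double-check is $n+m=|n-m|$ ($m=0$), where the exponents coincide. There the different eigenvectors $(1,1)^T$ and $(1,-1)^T$ still separate the solutions.
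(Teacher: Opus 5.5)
Your Frobenius analysis at $r=0$ is correct, and it follows the same route the paper relies on. The paper cites \cite{A03} for the Frobenius expansion and mentions the contraction approach of \cite{PR12}, which you use for the singular and resonant branches. One slip needs fixing: the recursion for $Z_{4,m}$ with $m\ge 1$ is not automatically nonresonant. Since $|n-m|+2\min(n,m)=n+m$, the matrix $(\rho+2k)^2-M_0$ is singular at $k=\min(n,m)$. It is still solvable there, because the $(1,1)^T$ and $(1,-1)^T$ directions are coupled only through $\mathfrak{D}_n=O(r^{2n})$, and that coupling enters only from step $k=n+1$ on. Alternatively, run $Z_{4,m}$ through your Step 3, which still yields the factor $1+o(r)$.
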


Then we consider the eigenvalue problem 
\begin{equation}\label{eigen2}
\mathcal{L}_{n, m}\binom{u_m}{v_m}=\omega^2 \binom{u_m}{v_m}.
\end{equation}

\begin{lemma}[Section 4.2.2 in \cite{A03}]\label{asy2}
    There are four linearly independent solutions 
     $$
        Y_{j,m}=\binom{Y_{j1,m}}{Y_{j2,m}},\quad j=1,2,3,4,
     $$
     to Problem \eqref{eigen2} with the following asymptotic behavior as $r\to \infty$:    
$$
\begin{aligned}
Y_{j,m}
&=\mathrm{Re}\left\{ \frac{e^{\pm i\omega r}}{\sqrt{r}}\left( \begin{array}{c}
	-\frac{4mn}{r^2}+O\left( \frac{1}{r^3} \right)\\
	1\pm \frac{4m^2-1}{8\omega r}+O\left( \frac{1}{r^2} \right)
\end{array} \right) \right\},
&& \text{for } \omega \ne 0,\ j=1,2;\\[6pt]
Y_{j,m}&=r^{\pm m}\left( \begin{array}{c}
	-\frac{4mn}{r^2}+O\left( \frac{1}{r^4} \right)\\
	1+O\left( \frac{1}{r^2} \right)
\end{array} \right),
&& \text{for } \omega =0,\ j=1,2,\ m\ne 0;\\[6pt]
Y_{1,m}
&=\log r\cdot \left( 1+O\left( \frac{1}{r^2} \right) \right)
\left( \begin{array}{c}
	0\\
	1
\end{array} \right),\\
Y_{2,m}
&=\left( 1+O\left( \frac{1}{r^2} \right) \right)
\left( \begin{array}{c}
	0\\
	1
\end{array} \right),
&& \text{for } \omega =0,\ m=0;\\[6pt]
Y_{j,m}
&=\frac{e^{\pm \sqrt{1-\omega ^2}r}}{\sqrt{r}}\left( \begin{array}{c}
	1\pm \frac{4\left( m^2+8n^2 \right) -1}{8\sqrt{1-\omega ^2}r}
	+O\left( \frac{1}{r^2} \right)\\
	\frac{4mn}{r^2}+O\left( \frac{1}{r^3} \right)
\end{array} \right),
&& \text{for } \omega ^2\ll 1,\ j=3,4.
\end{aligned}
$$
\end{lemma}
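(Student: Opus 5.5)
This lemma describes the large-$r$ behavior of solutions of the eigenvalue problem \eqref{eigen2}, so the plan is an asymptotic ODE analysis at the irregular singular point $r=\infty$. It is the counterpart of the Frobenius analysis at the origin behind Lemma \ref{asy1}. I would follow the scheme of \cite[Section 4.2.2]{A03}, adapted to the CSG2 coefficients.

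\textbf{Step 1: expand the coefficients.} I would first expand the coefficients of $\mathcal{L}_{n,m}$ in \eqref{Lmn} as $r\to\infty$. From the explicit radial profiles (for instance \eqref{asy} for $n=1$, and in general $\Phi_n^2=1-2n^2/r^2+O(r^{-4})$, $\Phi_n'=O(r^{-3})$), I expect
\[
\mathfrak{A}_n=\tfrac{4n}{r^2}+O(r^{-4}),\qquad \mathfrak{B}_n=O(r^{-3}),
\]
\[
\mathfrak{C}_n+\mathfrak{D}_n=1+O(r^{-2}),\qquad \mathfrak{C}_n-\mathfrak{D}_n=O(r^{-4}).
\]
Here the $1/r^2$ terms coming from $n^2/r^2$ and from $\Phi_n^2$ cancel in the second combination. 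This should be checked explicitly. Consequently the $u$-component sees the massive operator $-\nabla_r^2+1-\omega^2$, and the $v$-component sees the massless operator $-\nabla_r^2-\omega^2$ plus $m^2/r^2$. The two are coupled only through $m\mathfrak{A}_n\sim 4mn/r^2$.

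\textbf{Step 2: build formal solutions.} For each regime I would take the ansatz dictated by the decoupled leading problem and solve recursively for the correction terms.
\begin{itemize}
\item For $\omega\neq0$ I would use $e^{\pm i\omega r}r^{-1/2}\sum_k c_k r^{-k}$. The $v$-component satisfies a Bessel-type equation, which yields the correction $\pm(4m^2-1)/(8\omega r)$. The $u$-component is forced through the coupling, giving $u\approx -4mn\,r^{-2}v/(1-\omega^2+\omega^2)$, i.e.\ $-4mn/r^2$ at leading order.
\item For $\omega=0$, $m\neq0$ I would use $r^{\pm m}$ power series, giving the analogous forced $u$-term.
\item For $\omega=0$, $m=0$ the indicial roots coincide, which produces the $\log r$ and constant solutions.
\item For the decaying and growing modes I would use $e^{\pm\sqrt{1-\omega^2}r}r^{-1/2}$ in the $u$-component. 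The $v$-component is then slaved, giving $4mn/r^2$.
\end{itemize}
The $1/r$ correction in the massive mode gathers the full $O(r^{-2})$ part of $\mathfrak{C}_n+\mathfrak{D}_n$. This explains the $8n^2$ in $4(m^2+8n^2)-1$.

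\textbf{Step 3: turn formal solutions into true solutions.} I would write each solution as its truncated formal expansion plus a remainder. The remainder satisfies a Volterra integral equation on $[R,\infty)$, built from the variation-of-constants kernel of the decoupled constant-coefficient (Bessel) system. A contraction argument in a weighted space, as in \cite[Chapter 3]{PR12}, then shows the remainder is of the stated lower order. Linear independence of the four solutions follows from their distinct leading asymptotics.

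\textbf{Main obstacle.} The delicate step is the exponentially growing solution ($j=4$) and the oscillatory solutions. In those cases the Volterra equation must be integrated from a finite point rather than from infinity, so that the exponential dichotomy is respected. One must also check that the $O(r^{-2})$ coupling between the massive and massless channels does not spoil the orders claimed. I would handle this with the standard Levinson-type splitting, which diagonalizes the system up to $O(r^{-2})$ integrable perturbations before applying the contraction.
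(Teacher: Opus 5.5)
Your strategy is the one the paper relies on: formal expansions at $r=\infty$ in each regime, then a Volterra/contraction argument of Levinson type. The paper gives no proof of its own; it quotes \cite[Section 4.2.2]{A03} and points to \cite[Chapter 3]{PR12} as an alternative.

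\textbf{Step 1 is wrong as written.} Taken literally, it would derail Step 2. It is not true that $\Phi_n^2=1-2n^2/r^2+O(r^{-4})$. That is the expansion of $\Phi_n$ itself: compare \eqref{asy}, and note $Q_2=1-16/r^2+\cdots$. Hence $\Phi_n^2=1-4n^2/r^2+O(r^{-4})$, which also follows from the leading balance $-2n^2/r^2+\tfrac12(1-\Phi_n^2)=0$ in the radial equation. Writing $\Phi_n^2=1-\delta$, one finds
\[
\mathfrak{C}_n+\mathfrak{D}_n=1+\frac{6n^2}{r^2}+\frac{\delta}{2}+O(r^{-4}),\qquad
\mathfrak{C}_n-\mathfrak{D}_n=\frac{2n^2}{r^2}-\frac{\delta}{2}+O(r^{-4}).
\]
With your value $\delta=2n^2/r^2$, two things go wrong:
\begin{itemize}
\item The difference leaves $n^2/r^2$. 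The $v$-channel would then be a Bessel equation of order $\sqrt{m^2+n^2}$, so there would be no $r^{\pm m}$ and no $\log r$ or constant solution at $m=0$.
\item The sum gives $7n^2$ instead of $8n^2$.
\end{itemize}
The first outcome contradicts the exact kernel element $(0,\Phi_n)^T\to(0,1)^T$. With the correct $\delta=4n^2/r^2$, both of your claims hold. A cleaner route to the cancellation avoids the expansion entirely. Rotation invariance gives $(0,\Phi_n)^T\in\mathrm{Ker}\,\mathcal{L}_{n,0}$, hence $\mathfrak{C}_n-\mathfrak{D}_n=(\nabla_r^2\Phi_n-\mathfrak{B}_n\Phi_n')/\Phi_n=O(r^{-4})$.

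\textbf{Matching the formulas as printed.} If you actually run the recursion in Step 2, you will not get the $1/r$ corrections exactly as stated. For $v=e^{\pm i\omega r}r^{-1/2}(1+c/r+\cdots)$ you get $c=\pm i(4m^2-1)/(8\omega)$, as in the Hankel asymptotics. For the massive channel, $e^{\pm kr}$ with $k=\sqrt{1-\omega^2}$ pairs with $1\mp\frac{4(m^2+8n^2)-1}{8kr}$. These look like misprints in the lemma, but you should not claim your computation reproduces them verbatim.

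\textbf{Independence of the oscillatory pair.} Your independence argument does not cover this pair as written. Both $\mathrm{Re}\{\cdots\}$ expressions have the same leading term $r^{-1/2}\cos(\omega r)\,(0,1)^T$, so distinct leading asymptotics do not separate them. Take instead the real and imaginary parts of a single complex solution, i.e.\ the $\cos$ and $\sin$ phases.

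\textbf{Direction of integration.} For the oscillatory solutions, integrating the Volterra equation "from a finite point" is only right for the component along the decaying mode $e^{-kr}$. The components along $e^{kr}$ and $e^{\pm i\omega r}$ must be integrated from $\infty$. Otherwise the $e^{kr}$ part blows up, and you obtain some solution but not one with the prescribed asymptotics. The Levinson splitting you invoke does exactly this, so state it that way.
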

\begin{remark}
    The above asymptotic behavior of the low-frequency ($\omega^2\ll 1$) eigenfunctions is helpful for numerically analyzing the existence of kernels.
\end{remark}

Using Lemmas \ref{asy1} and \ref{asy2} on the asymptotic behavior of the kernels, we first establish the non-degeneracy of the linearized CSG2 operator at $\Phi_1$. 
\begin{lemma}[Non-degeneracy for $\Psi_1$]\label{nondegeneracy}
    For $n=1$ and any $m\in\mathbb{Z}$, the solutions of the homogeneous problem
    \begin{equation}\label{uv12}
\mathcal{L}_{1, m}\binom{u}{v}=0
\end{equation}
which are defined on all $\mathbb{R}^2$ and bounded must belong to the space 
$$
\mathrm{Span}_{\mathbb{R}}\left\{\binom{0}{\Phi_1},\ \binom{\Phi'_1}{-(1/r)\Phi_1},\ \binom{\Phi'_1}{(1/r)\Phi_1} \right\}.
$$
\end{lemma}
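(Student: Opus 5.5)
The plan is to treat each Fourier mode separately and combine the local asymptotics of Lemmas \ref{asy1} and \ref{asy2} with an ODE argument. By Remark \ref{rmk1} it suffices to take $m\ge 0$. For $n=1$, Lemma \ref{asy1} gives, near the origin, the four local behaviors $r^{1+m}$, $r^{-|1-m|}$, $r^{-(1+m)}$, and $r^{|1-m|}$ (or $\log r$ if $m=1$). A solution that is bounded on $\mathbb{R}^2$, and in particular smooth at $0$, must therefore lie in the two-dimensional span of $Z_{1,m}$ and $Z_{4,m}$, except that for $m=1$ we keep $Z_{1,1}$ and $Z_{2,1}$, the latter being the bounded one with exponent $r^0$. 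At infinity, Lemma \ref{asy2} with $\omega=0$ gives the behaviors $r^{\pm m}(\ast,1)^T$ together with the exponentially growing and decaying pair $e^{\pm r}/\sqrt r$. Boundedness forces the solution into the span of the decaying modes: $r^{-m}$ and $e^{-r}$ when $m\ge1$, and the constant mode and $e^{-r}$ when $m=0$. Bounded kernel elements in mode $m$ therefore correspond to intersections of two two-dimensional subspaces inside a four-dimensional solution space, and the generic expectation is that the intersection is $0$ unless a symmetry produces an element.

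\textbf{Modes $m=0,1$.} These carry the known kernels $(0,\Phi_1)^T$ (rotation) and $(\Phi'_1,\mp\Phi_1/r)^T$ (translations). Because $\Phi_1=r/\sqrt{r^2+4}$ is explicit, the coefficients $\mathfrak A_1,\dots,\mathfrak D_1$ are rational functions, so the system \eqref{uv12} can be reduced by order. Knowing one solution, I would build a second bounded one from the known kernel element using a Wronskian/variation-of-parameters argument, and then show that the second solution regular at the origin grows at infinity. Concretely, for $m=0$ the system decouples in a convenient basis (the $v$-equation contains $\mathfrak C-\mathfrak D$, which annihilates $\Phi_1$), and the second regular solution behaves like $\log r$. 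For $m=1$, I would check, via the explicit rational form, that the other regular solution excites $r^{+1}$ or $e^{r}$ at infinity.

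\textbf{Modes $m\ge 2$.} Here I would use a positivity (Hardy-type) argument in the style of del Pino--Felmer--Kowalczyk \cite{DFK04}. Write a general $\phi$ in mode $m$ as $\phi = \Phi'_1$-type multiple of a test function, or more cleanly use the translation kernel $w=(\Phi'_1,-\Phi_1/r)$ of $\mathcal L_{1,1}$ as a ground state. The operator $\mathcal L_{1,m}-\mathcal L_{1,1}$ has the form $\frac{m^2-1}{r^2}I + (m-1)\mathfrak A_1\sigma_1$, where $\sigma_1$ denotes the off-diagonal coupling. Since $\mathfrak A_1=\frac{4}{r^2(2-\Phi_1^2)}$ and $2-\Phi_1^2\ge1$, I would show that this difference is positive definite on the relevant sign-sector for $m\ge2$. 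The operator is symmetric with respect to the weight $(2-\Phi_1^2)^{-1}r\,dr$ coming from the energy; the first-order term $\mathfrak B_n\frac{d}{dr}$ fits this weight. Because $\mathcal L_{1,1}\ge0$ (its kernel element $w$ has one sign pattern, so a ground-state substitution $u=w_1\xi$, $v=w_2\zeta$ gives a nonnegative quadratic form), it follows that $\mathcal L_{1,m}>0$ for $m\ge2$. The decay rates of bounded kernel elements from Lemma \ref{asy2} ($r^{-m}$, $m\ge2$) justify the integrations by parts, so no bounded kernel exists.

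\textbf{Main obstacle.} I expect the main obstacle to be the sign structure: the matrix $\frac{m^2-1}{r^2}+(m-1)\mathfrak A_1\sigma_1$ has eigenvalues $\frac{m^2-1}{r^2}\pm(m-1)\mathfrak A_1$, and the minus branch requires $\frac{m+1}{r^2}\ge \mathfrak A_1=\frac{4}{r^2(2-\Phi_1^2)}$. This holds for $m\ge3$ but is borderline for $m=2$ when $\Phi_1\to1$. For $m=2$, I would therefore instead compare $\mathcal L_{1,2}$ with $\mathcal L_{1,1}$ after exploiting the correct pairing $(u,-v)$ given by the ground-state signs, or fall back on the explicit rational ODE: exclude a bounded solution by showing that the regular solution at $0$ (computed by Frobenius) has a nonzero $r^{2}$ coefficient at infinity, which can be verified via the explicit coefficients.
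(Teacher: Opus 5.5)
Your argument for $|m|\ge 3$ is correct and differs from the paper's. Take the weight $\rho=r/(2-\Phi_1^2)$, for which $-\nabla_r^2+\mathfrak{B}_1\frac{d}{dr}=-\rho^{-1}\frac{d}{dr}\big(\rho\frac{d}{dr}\big)$, and set $Q_m(u,v):=\int_0^\infty\rho\,\mathcal{L}_{1,m}\binom{u}{v}\cdot\binom{u}{v}\,dr$. The substitution $u=\Phi_1'\xi$, $v=-(\Phi_1/r)\zeta$ then gives $Q_1\ge 0$. Since $\mathfrak{A}_1=\frac{2}{r^2}+\frac{2}{r^2+8}$, the added term $\mathcal{L}_{1,m}-\mathcal{L}_{1,1}$ has lower eigenvalue $(m-1)\big(\frac{m-1}{r^2}-\frac{2}{r^2+8}\big)>0$ for $m\ge3$. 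This is more transparent than the paper's Step 3, which uses $\tilde u,\tilde v$, the unweighted form, and computer-checked auxiliary functions $\gamma_1,\gamma_2$.

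\textbf{Mode $m=0$.} Your description here is wrong. The system is already decoupled in $(u,v)$, and its regular solutions are $(0,\Phi_1)$ and $(w,0)$, where $w$ is the regular solution of $[-\nabla_r^2+\mathfrak{B}_1\frac{d}{dr}+\mathfrak{C}_1+\mathfrak{D}_1]w=0$. The $\log r$ solution you get from $\Phi_1$ by reduction of order behaves like $r^{-1}$ at the origin, so it is not regular. What must be shown is that $w$ does not pick the decaying branch $e^{-r}/\sqrt r$ (note $\mathfrak{C}_1+\mathfrak{D}_1\to 1$), and you never address this. The paper uses $\mathfrak{C}_1+\mathfrak{D}_1>0$ (a maximum principle giving $w>0$), then a Wronskian comparison with $\Phi_1$ and Gronwall. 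More directly: a bounded $w$ would decay exponentially, and then $\int\rho\,(w'^2+(\mathfrak{C}_1+\mathfrak{D}_1)w^2)\,dr=0$ forces $w=0$.

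\textbf{Mode $m=1$.} Checking ``via the explicit rational form'' is not available. $Z_{1,1}$ is not explicit, and one known solution only reduces the fourth-order system to a third-order one. The paper proves both components of $Z_{1,1}$ are positive (using $\mathfrak{A}_1>0$ and $r^{-2}+\mathfrak{C}_1\pm\mathfrak{D}_1>0$). A Wronskian comparison with $\Phi_1'$ then forces $Z_{11,1}\gtrsim r^3$.

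\textbf{Mode $m=2$.} Splitting off $\mathcal{L}_{1,1}$ cannot work here. The lower eigenvalue $\frac{1}{r^2}-\frac{2}{r^2+8}=\frac{8-r^2}{r^2(r^2+8)}$ is negative for $r>2\sqrt2$; this is not borderline (that is $m=3$). A Frobenius expansion continued numerically to infinity is not a proof, and the remark about ``the correct pairing $(u,-v)$'' is not carried out.

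Both $m=1$ and $m=2$ can be closed inside your own framework. Do the ground-state substitution directly in $Q_m$, not only in $Q_1$:
\[
Q_m=\int_0^\infty\rho\Big[(\Phi_1')^2(\xi')^2+\frac{\Phi_1^2}{r^2}(\zeta')^2+\mathfrak{A}_1\frac{\Phi_1\Phi_1'}{r}\big(\xi^2+\zeta^2-2m\xi\zeta\big)+\frac{m^2-1}{r^2}\Big((\Phi_1')^2\xi^2+\frac{\Phi_1^2}{r^2}\zeta^2\Big)\Big]dr .
\]
For $m=1$, $Q_1=0$ forces $\xi=\zeta=\mathrm{const}$, which is exactly the translation kernel. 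For $m\ge2$, the zeroth-order $2\times2$ form is positive definite if and only if $\frac{m^2-1}{r^4}+\frac{\mathfrak{A}_1}{r^2}(t+t^{-1})>\mathfrak{A}_1^2$, with $t=r\Phi_1'/\Phi_1=\frac{4}{r^2+4}$. This condition is monotone in $m$. For $m=2$ and $s=r^2$ it reduces to $s^3+3s^2+16s+192>0$.

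As written, your proposal proves the lemma only for $|m|\ge 3$.
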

\begin{proof}
    By Remark \ref{rmk1}, we only need to consider the case $m\geq 0$. 

    \noindent \textbf{Step 1. }For the case $m=0$, using the fact that $(0,\Phi_1)^T\in \mathrm{Ker}\mathcal{L}_{1,0}$, we have 
    \begin{equation}\label{phi}
        \left[-\nabla^2_r+\mathfrak{B}_1\frac{d }{d r}+ \mathfrak{C}_1-\mathfrak{D}_1 \right]\Phi_1=0.
    \end{equation}
    By Lemma \ref{asy1}, there exist two constants $c_1,c_4$ such that $(0,\Phi_1)^T=c_1 Z_{1,0}+c_4 Z_{4,0}$. It remains to prove that $Z_{1,0}$ blows up at $\infty$. Then, since the space of functions in $\mathrm{Ker}\mathcal{L}_{1,0}$ that remain bounded near the origin is two-dimensional, we conclude that any bounded element of $\mathrm{Ker}\mathcal{L}_{1,0}$ must be a constant multiple of $(0,\Phi_1)^T$. 
 
    Denote $Z_{1,0}=(Z_{11,0},Z_{12,0})^T$. Then we have 
    \begin{equation}\label{Z11}
        \left[-\nabla^2_r+\mathfrak{B}_1\frac{d }{d r}+ \mathfrak{C}_1+\mathfrak{D}_1 \right]Z_{11,0}=0.
    \end{equation}
    Note that $Z_{11,0}=r(1+o(r))$ as $r\to0$. We claim that $Z_{11,0}$ is strictly positive on $(0,\infty)$. Indeed, if $Z_{11,0}(r_0)=0$ at some $r_0\in(0,\infty)$, then we can find $r_1\in(0,r_0)$ at which $Z_{11,0}$ attains a positive local maximum, that is 
    \begin{equation}\label{Z11-2}
        Z''_{11,0}(r_1)\leq 0,\quad Z'_{11,0}(r_1)=0,\quad Z_{11,0}(r_1)>0.
    \end{equation}
    By definition, it is easy to check that $\mathfrak{C}_1+\mathfrak{D}_1>0$ on $(0,\infty)$. Hence, by \eqref{Z11}-\eqref{Z11-2}, we obtain 
    $$
    \left[-\nabla^2_r+\mathfrak{B}_1\frac{d }{d r}+ \mathfrak{C}_1+\mathfrak{D}_1 \right]Z_{11,0}>0 \quad \text{at } r_1,
    $$
    which is a contradiction. 
    
    Multiplying Eq.~\eqref{phi} by $Z_{11,0}$  and Eq.~\eqref{Z11} by $\Phi_1$, and then subtracting, we obtain
    \begin{equation*}
        \begin{aligned}
            &\frac{1}{r}\frac{d }{d r}\left( r\frac{d Z_{11,0}}{d r}\Phi _1 \right) -\frac{1}{r}\frac{d }{d r}\left( r\frac{d \Phi _1}{d r}Z_{11,0} \right)\\
            &=\mathfrak{B}_1\left( \frac{d Z_{11,0}}{d r}\Phi _1-\frac{d \Phi _1}{d r}Z_{11,0} \right) +2\mathfrak{D}_1Z_{11,0}\Phi _1.
        \end{aligned}
    \end{equation*}
Since $\Phi'>0$, $Z_{11,0}>0$ and $\mathfrak{B}_1<0$, we have
\begin{equation*}
    \begin{aligned}
        \frac{d Z_{11,0}}{d r}\Phi _1&>\frac{d Z_{11,0}}{d r}\Phi _1-\frac{d \Phi _1}{d r}Z_{11,0}\\
&=\frac{1}{r}\int_0^r{t\mathfrak{B}_1\left( \frac{d Z_{11,0}}{d t}\Phi _1-\frac{d \Phi _1}{d t}Z_{11,0} \right)}d t+\frac{1}{r}\int_0^r{2t\mathfrak{D}_1Z_{11,0}\Phi _1d t}\\
&>\frac{1}{r}\int_0^r{t\mathfrak{B}_1 \frac{d Z_{11,0}}{d t}\Phi _1}d t+\frac{1}{r}\int_0^r{2t\mathfrak{D}_1Z_{11,0}\Phi _1d t}\\
&=\mathfrak{B}_1\Phi_1Z_{11,0}+\frac{1}{r}\int_{0}^{r}{tZ_{11,0}\left[2\mathfrak{D}_1 \Phi_1 -t^{-1}(t\mathfrak{B}_1\Phi_1)'\right]d t}.
    \end{aligned}
\end{equation*}
Note that 
\begin{equation}\label{asy-B1}
    \mathfrak{B}_1=-\frac{8}{r^3}+O\left( \frac{1}{r^5} \right) \quad \text{as } r\to\infty.
\end{equation}
It is easy to verify that
$$
0>\mathfrak{B}_1\Phi_1=O(r^{-3}) \quad \text{as } r\to \infty
$$
and 
$$
0<2\mathfrak{D}_1 \Phi_1 -r^{-1}(r\mathfrak{B}_1\Phi_1)'=\left\{ \begin{array}{l}
	\frac{3}{8}r+O\left( r^3 \right) \quad \text{as } r\rightarrow 0,\\[0.5em]
	1+O\left( r^{-2} \right) \quad \text{as } r\rightarrow \infty .\\
\end{array} \right. 
$$
Using the fact that $\Phi_1\to 1$ at $\infty$, we may choose $R>0$ sufficiently large such that for all $r>R$, we have 
\begin{equation}\label{Z11-3}
    \frac{d Z_{11,0}}{d r}\geq -\frac{1}{r^2}Z_{11,0}+\frac{\delta_{R}}{r}\int_{R}^{r}{t Z_{11,0}d t}, 
\end{equation}
where $\delta_R\sim1$. Setting 
$$
V(r)=\int_{R}^{r}{t Z_{11,0}d t},
$$
it follows from \eqref{Z11-3} that 
$$
\left(\frac{V'}{r}\right)'\geq -\frac{V'}{r^3}+\frac{\delta_R}{r}V\implies V''\geq \delta_R V \quad \text{for } r>R.
$$
By Gronwall's inequality, both functions $V$ and $Z_{11,0}$ grow exponentially at $\infty$. 

\noindent 
\textbf{Step 2. }Now we consider the case $m=1$. Recall that $(\Phi'_1,-(1/r)\Phi_1)^T\in\mathrm{Ker}\mathcal{L}_{1,1}$. By Lemma \ref{asy1}, there exist two constants $c_1,c_2$ ($c_2\ne 0$) such that $(\Phi'_1,-(1/r)\Phi_1)^T=c_1Z_{1,1}+c_2Z_{2,1}$. As before, we only need to prove that $Z_{1,1}=(Z_{11,1},Z_{12,1})^T$ blows up at $\infty$. Note that we have 
\begin{equation}\label{uv13}
    \left\{ \begin{array}{l}
	\left[ -\nabla _{r}^{2}+\mathfrak{B}_1\frac{d }{d r}+\frac{1}{r^2}+\mathfrak{C}_1+\mathfrak{D}_1 \right] \Phi'_1-\frac{1}{r}\mathfrak{A}_1 \Phi_1=0,\\[0.5em]
	\left[ -\nabla _{r}^{2}+\mathfrak{B}_1\frac{d }{d r}+\frac{1}{r^2}+\mathfrak{C}_1-\mathfrak{D}_1 \right]\left(-\frac{1}{r}\Phi_1\right)+\mathfrak{A}_1\Phi'_1=0,\\
\end{array} \right. 
\end{equation}  
and
\begin{equation}\label{uv14}
    \left\{ \begin{array}{l}
	\left[ -\nabla _{r}^{2}+\mathfrak{B}_1\frac{d }{d r}+\frac{1}{r^2}+\mathfrak{C}_1+\mathfrak{D}_1 \right] Z_{11,1}+\mathfrak{A}_1Z_{12,1}=0,\\[0.5em]
	\left[ -\nabla _{r}^{2}+\mathfrak{B}_1\frac{d }{d r}+\frac{1}{r^2}+\mathfrak{C}_1-\mathfrak{D}_1 \right] Z_{12,1}+\mathfrak{A}_1Z_{11,1}=0.\\
\end{array} \right. 
\end{equation}  

By Lemma \ref{asy1}, $Z_{11,1},Z_{12,1}=r^2(1+o(r))$ as $r\to 0$. We claim that both $Z_{11,1}$ and $Z_{12,1}$ are strictly positive on $(0,\infty)$. The proof proceeds analogously to the case $m=0$ in Step 1. In fact, if $Z_{11,1}(r_0)=0$ at some $r_0\in(0,\infty)$ and $Z_{11,1},Z_{12,1}>0$ on $(0,r_0)$, then there exists $r_1\in(0,r_0)$ such that 
\begin{equation}\label{Z}
    Z''_{11,1}(r_1)\leq 0,\quad Z'_{11,1}(r_1)=0,\quad Z_{11,1}(r_1)>0,\quad Z_{12,1}(r_1)>0.
\end{equation}
Combining \eqref{Z} with the fact that $r^{-2}+\mathfrak{C}_1\pm\mathfrak{D}_1>0$ and $\mathfrak{A}_1>0$, we obtain 
$$
\left[ -\nabla _{r}^{2}+\mathfrak{B}_1\frac{d }{d r}+\frac{1}{r^2}+\mathfrak{C}_1+\mathfrak{D}_1 \right] Z_{11,1}+\mathfrak{A}_1Z_{12,1}>0\quad \text{at } r_1,
$$
which is a contradiction. Similarly, for the case $Z_{12,1}(r_0)=0$ at some $r_0\in(0,\infty)$ and $Z_{11,1},Z_{12,1}>0$ on $(0,r_0)$, we can find some $r_1\in(0,r_0)$ such that 
$$
\left[ -\nabla _{r}^{2}+\mathfrak{B}_1\frac{d }{d r}+\frac{1}{r^2}+\mathfrak{C}_1-\mathfrak{D}_1 \right] Z_{12,1}+\mathfrak{A}_1Z_{11,1}>0\quad \text{at } r_1,
$$
which is also a contradiction.   

Multiplying the first equation of \eqref{uv13} by $Z_{11,1}$, the first equation of \eqref{uv14} by $\Phi'_1$, and subtracting, we obtain
\begin{equation*}
    \begin{aligned}
&\frac{1}{r}\frac{d }{d r}\left( r\frac{d Z_{11,1}}{d r}\Phi'_1 \right) -\frac{1}{r}\frac{d }{d r}\left( r\frac{d \Phi'_1}{d r}Z_{11,1} \right) \\
&=\mathfrak{B}_1\left( \frac{d Z_{11,1}}{d r}\Phi'_1-\frac{d \Phi'_1}{d r}Z_{11,1} \right) +\frac{1}{r}\mathfrak{A}_1\Phi _1Z_{11,1}+\mathfrak{A}_1Z_{12,1}\Phi'_1.
    \end{aligned}
\end{equation*}
Hence, 
\begin{equation*}
    \begin{aligned}
&r\left( \Phi'_1 \right) ^2\frac{d }{d r}\left( \frac{Z_{11,1}}{\Phi'_1} \right)\\
&=r\frac{d Z_{11,1}}{d r}\Phi'_1-r\frac{d \Phi'_1}{d r}Z_{11,1}\\
&=\int_0^r{t\mathfrak{B}_1\left( \Phi'_1 \right) ^2\frac{d }{d t}\left( \frac{Z_{11,1}}{\Phi'_1} \right) d t}+\int_0^r{\left( \mathfrak{A}_1\Phi _1Z_{11,1}+t\mathfrak{A}_1Z_{12,1}\Phi'_1 \right) d t}\\
&=r\mathfrak{B}_1\Phi'_1Z_{11,1}+\int_0^r{\left[ \mathfrak{A}_1\Phi _1-\left( \Phi'_1 \right) ^{-1}\frac{d }{d t}\left( t\mathfrak{B}_1\left( \Phi'_1 \right) ^2 \right) \right]}Z_{11,1}d t+\int_0^r{t\mathfrak{A}_1Z_{12,1}\Phi'_1d t}.
    \end{aligned}
\end{equation*}
A direct calculation shows that the integrands in the final two integrals are strictly positive on $(0,\infty)$. Therefore, there exists a constant $C_1>0$ such that
\begin{equation}\label{Z11-4}
    \frac{d }{d r}\left( \frac{Z_{11,1}}{\Phi'_1} \right)\geq \mathfrak{B}_1 \frac{Z_{11,1}}{\Phi'_1}+C_1 r^5\quad \text{for } r>R, 
\end{equation}  
where $R>0$ is sufficiently large and we use the asymptotic estimate for $\Phi'_1$ in \eqref{asy}. It follows from \eqref{asy-B1} that 
\begin{equation}\label{exp}
    1\leq \exp\left(-\int_{R}^{r}\mathfrak{B}_1d t \right)\leq C_R\quad \text{for } r>R,
\end{equation}
where the constant $C_R>0$ depends on $R$.  
Combining  \eqref{Z11-4} and \eqref{exp}, we obtain 
$$
\frac{d }{d r}\left[ \exp \left( -\int_R^r{\mathfrak{B}_1d t} \right) \cdot \frac{Z_{11,1}}{\Phi'_{1}} \right] \geq C_1 r^5\exp \left( -\int_R^r{\mathfrak{B}_1d t} \right)\geq C_1 r^5\quad\text{for } r>R.
$$
Integrating the above inequality from $R$ to $r$ yields:
$$
Z_{11,1}\gtrsim C_1 C_R^{-1}\Phi'_1 r^6 \gtrsim r^3 \quad \text{for } r>R,
$$
which implies that both $Z_{11,1}$ and $Z_{12,1}$ blow up exponentially at $\infty$ by Lemma \ref{asy2}.

\noindent
\textbf{Step 3. }Finally we treat the case $m\geq 2$. We set $\tilde{u}=(u+v)/2,\quad \tilde{v}=(u-v)/2$ and then Problem \eqref{uv12} can be rewritten as 
\begin{equation*}
    \left\{ \begin{array}{l}
	\left[ -\nabla _{r}^{2}+\mathfrak{B}_1\frac{d }{d r}+\frac{m^2}{r^2}+\mathfrak{C}_1+m\mathfrak{A}_1 \right] \tilde{u}+\mathfrak{D}_1 \tilde{v}=0,\\[0.5em]
	\left[ -\nabla _{r}^{2}+\mathfrak{B}_1\frac{d }{d r}+\frac{m^2}{r^2}+\mathfrak{C}_1-m\mathfrak{A}_1 \right] \tilde{v}+\mathfrak{D}_1 \tilde{u}=0.\\
\end{array} \right. 
\end{equation*} 
Combining the boundedness of $(u,v)$ with Lemmas \ref{asy1} and \ref{asy2}, we obtain that both $u$ and $v$ are at least of order $O(r^{m-1})$ as $r\to 0$ and $O(r^{-m})$ as $r\to\infty$ and hence belong to the space $H^1(\mathbb{R}^+,rd r)$. Now we define 
\begin{equation*}
    \begin{aligned}
\mathcal{Q}_m\left( \tilde{u},\tilde{v} \right) :=&\int_0^{\infty}{\left[ \left(\frac{d \tilde{u}}{d r}\right)^2+\mathfrak{B}_1\frac{d \tilde{u}}{d r}\tilde{u}+\left( \frac{m^2}{r^2}+\mathfrak{C}_1+m\mathfrak{A}_1 \right) \tilde{u}^2 \right] rd r}\\
&+\int_0^{\infty}{\left[ \left(\frac{d \tilde{v}}{d r}\right)^2+\mathfrak{B}_1\frac{d \tilde{v}}{d r}\tilde{v}+\left( \frac{m^2}{r^2}+\mathfrak{C}_1-m\mathfrak{A}_1 \right) \tilde{v}^2 \right] rd r}+\int_0^{\infty}{2\mathfrak{D}_1\tilde{u}\tilde{v}rd r}.
    \end{aligned}
\end{equation*}
By definition, we denote by
\begin{equation}\label{EF}
    \begin{aligned}
\mathfrak{E}(r)&=\frac{m^2}{r^2}+\mathfrak{C}_1\pm m\mathfrak{A}_1-\frac{\left( m\pm 1 \right) ^2}{r^2}\mp \frac{2m}{8+r^2}\\
&=\frac{-192+64r^2+22r^4+r^6}{2\left( 4+r^2 \right) ^2\left( 8+r^2 \right)},\\
\mathfrak{F}\left( \tilde{u},\tilde{v} \right) &=\mathfrak{E}\tilde{u}^2+\mathfrak{E}\tilde{v}^2+2\mathfrak{D}_1\tilde{u}\tilde{v}-\frac{1}{2}\Phi _{1}^{2}\left( \tilde{u}+\tilde{v} \right) ^2\\
&=\left( \mathfrak{E}-\frac{1}{2}\Phi _{1}^{2} \right) \tilde{u}^2+\left( \mathfrak{E}-\frac{1}{2}\Phi _{1}^{2} \right) \tilde{v}^2+\left( 2\mathfrak{D}_1-\Phi _{1}^{2} \right) \tilde{u}\tilde{v}.
    \end{aligned}
\end{equation}
It is worth noting that $\mathfrak{E}$ is independent of $m$ and it arises simply from expanding the first three terms on the right-hand side of the first formula with respect to $m$.
Then 
\begin{equation*}
    \begin{aligned}
\mathcal{Q}_m\left( \tilde{u},\tilde{v} \right) &=\int_0^{\infty}{\left[ \left( \frac{d \tilde{u}}{d r} \right) ^2+\left( \frac{d \tilde{v}}{d r} \right) ^2 -\frac{1}{2}r^{-1}(r\mathfrak{B}_1)'(\tilde{u}^2+\tilde{v}^2)    \right.}\\
&\quad\left. +\left( \frac{\left( m+1 \right) ^2}{r^2}+\frac{2m}{8+r^2} \right) \tilde{u}^2+\left( \frac{\left( m-1 \right) ^2}{r^2}-\frac{2m}{8+r^2} \right) \tilde{v}^2 \right] rd r\\
&\quad+\frac{1}{2}\int_0^{\infty}{\Phi _{1}^{2}\left( \tilde{u}+\tilde{v} \right) ^2rd r}+\int_0^{\infty}{\mathfrak{F}\left( \tilde{u},\tilde{v} \right) rd r}.
    \end{aligned}
\end{equation*}
Let us introduce two undetermined real-valued functions $\gamma_1(r),\gamma_2(r)$ (to be chosen later) and rewrite the integrands in the last two integrals as follows:
\begin{equation*}
    \begin{aligned}
\frac{1}{2}\Phi _{1}^{2}\left( \tilde{u}+\tilde{v} \right) ^2+\mathfrak{F}\left( \tilde{u},\tilde{v} \right) &=\frac{1}{2}\left[ \left( 1-\gamma _1 \right) \tilde{u}+\left( 1-\gamma _2 \right) \tilde{v} \right] ^2\Phi _{1}^{2}\\
&\quad+\left[ \mathfrak{E}-\frac{1}{2}\Phi _{1}^{2}-\frac{1}{2}\left( \gamma _{1}^{2}-2\gamma _1 \right) \Phi _{1}^{2} \right] \tilde{u}^2\\
&\quad+\left[ \mathfrak{E}-\frac{1}{2}\Phi _{1}^{2}-\frac{1}{2}\left( \gamma _{2}^{2}-2\gamma _2 \right) \Phi _{1}^{2} \right] \tilde{v}^2\\
&\quad+\left[ \left( \gamma _1+\gamma _2-\gamma _1\gamma _2 \right) \Phi _{1}^{2}+2\mathfrak{D}_1-\Phi _{1}^{2} \right] \tilde{u}\tilde{v}.
    \end{aligned}
\end{equation*}
We require the coefficients of the sign-indefinite term, i.e. $\tilde{u}\tilde{v}$, in the last line to vanish, which means that $\gamma_1,\gamma_2$ must satisfy the following relation:
\begin{equation}\label{gg2}
    \gamma _1\gamma _2-\gamma _1-\gamma _2=\frac{2\mathfrak{D}_1-\Phi _{1}^{2}}{\Phi _{1}^{2}}=\frac{4(20+3r^2)}{(4+r^2)(8+r^2)}.
\end{equation}
Then we estimate 
\begin{equation*}
    \begin{aligned}
        \mathcal{Q}_m\left( \tilde{u},\tilde{v} \right)&\geq \int_0^{\infty}{\left[ \frac{\left( m+1 \right) ^2}{r^2}+\frac{2m}{8+r^2}-\frac{1}{2}r^{-1}\left( r\mathfrak{B}_1 \right)'+\mathfrak{E}-\frac{1}{2}\Phi _{1}^{2}-\frac{1}{2}\left( \gamma _{1}^{2}-2\gamma _1 \right)\Phi _{1}^{2} \right] \tilde{u}^2rd r}\\
&+\int_0^{\infty}{\left[ \frac{\left( m-1 \right) ^2}{r^2}-\frac{2m}{8+r^2}-\frac{1}{2}r^{-1}\left( r\mathfrak{B}_1 \right)'+\mathfrak{E}-\frac{1}{2}\Phi _{1}^{2}-\frac{1}{2}\left( \gamma _{2}^{2}-2\gamma _2 \right)\Phi _{1}^{2} \right] \tilde{v}^2rd r}\\
&=:\int_{0}^{\infty}{\mathcal{A}(m,r) \tilde{u}^2rd r}+\int_{0}^{\infty}{\mathcal{B}(m,r) \tilde{v}^2rd r}.
    \end{aligned}
\end{equation*}
Our goal now is to find suitable $\gamma_1$ and $\gamma_2$ so that for each $m\geq 2$, both $\mathcal{A}(m,r)$ and $\mathcal{B}(m,r)$ are positive for $r\in(0,\infty)$. If this is achieved, combining with the fact that $\mathcal{Q}_m(\tilde{u},\tilde{v})=0$, we immediately obtain 
$\tilde{u}=\tilde{v}=0$. 

Note that for each fixed $r\in(0,\infty)$, when restricted to $m\geq2$, both $\mathcal{A}(m,r)$ and $\mathcal{B}(m,r)$ 
are monotonically increasing functions of $m$. 
Hence, it suffices to ensure that 
\begin{equation*}
    \mathcal{A}(2,r)>0\quad \text{and} \quad \mathcal{B}(2,r)>0\quad \text{on } (0,\infty).
\end{equation*}
Let us introduce the function $g(r)=(4+r^2)^{-1}$ and require that $\gamma_1(r)$ solves the equation
\begin{equation}\label{Ag}
    \mathcal{A}(2,r)=g(r)\quad \text{with } \gamma_{1}(r)>1\quad \text{on } (0,\infty).
\end{equation}
We compute
\begin{equation*}
    \begin{aligned}
        &\mathcal{A}(2,r)+\frac{1}{2}(\gamma^2_1-2\gamma_1)\Phi^2_1- g(r)\\
&=\frac{9}{r^2}+\frac{4}{8+r^2}-\frac{1}{2}r^{-1}\left( r\mathfrak{B}_1 \right)'+\mathfrak{E}-\frac{1}{2}\Phi _{1}^{2}- g(r)\\
&=\frac{9216+6656r^2+2096r^4+308r^6+17r^8}{r^2(4+r^2)^2(8+r^2)^2}>0,
    \end{aligned}
\end{equation*}
which implies that there exists a unique real analytic function $\gamma_1(r)$ satisfying \eqref{Ag}. Indeed, one may treat $\gamma_1$ as an unknown and use the quadratic formula to explicitly determine the value of 
$\gamma_1$ that satisfies the required condition. We remark that the choice of 
function $g(r)$ here is not unique; its sole purpose is to ensure that $\gamma_1(r)$ obtained from \eqref{Ag} satisfies the condition 
$\mathcal{A}(2,r)>0$ and that the corresponding $\gamma_2(r)$ derived from relation \eqref{gg2} satisfies the condition 
$\mathcal{B}(2,r)>0$. Below we only need to verify the latter condition. Note that 
$$
\gamma_2(r)=\frac{1}{\gamma_1(r)-1}\left[\frac{4(20+3r^2)}{(4+r^2)(8+r^2)}+\gamma_1(r)\right],
$$
which is well-defined on $(0,\infty)$. Substituting 
$\gamma_2$ into the expression for $\mathcal{B}(2,r)$, we can directly use \textit{Mathematica} to verify that  
$\mathcal{B}(2,r)>0$ with the asymptotic behavior $O(r^{-2})$ as $r\to0$ and $r\to\infty$. More precisely, we have 
$$
\mathcal{B}(2,r)=\frac{\mathcal{B}_1(2,r)}{\mathcal{B}_2(2,r)},
$$
where 
\begin{equation*}
    \begin{aligned}
        \mathcal{B}_1(2,r)&=18874368+8912896r^2+2031616r^4+1589248r^6+856576r^8+239744r^{10}\\
        &\quad +41056r^{12}+4464r^{14}+276r^{16}+7r^{18},\\
        \mathcal{B}_2(2,r)&=r^2(4+r^2)^2(8+r^2)^2(18432+13312r^2+4448r^4+744r^6+54r^8+r^{10}).
    \end{aligned}
\end{equation*}
This completes the proof of the lemma.
\end{proof}

\begin{lemma}[Almost non-degeneracy for $\Psi_2$]\label{nond2}
    For $n=2$ and any $m\in\mathbb{Z}\setminus\{\pm 2\}$, the solutions of the homogeneous problem
    \begin{equation*}
\mathcal{L}_{2, m}\binom{u}{v}=0
\end{equation*}
which are defined on all $\mathbb{R}^2$ and bounded must belong to the space 
$$
\mathrm{Span}_{\mathbb{R}}\left\{\binom{0}{\Phi_2},\ \binom{\Phi'_2}{-(2/r)\Phi_2},\ \binom{\Phi'_2}{(2/r)\Phi_2},\ \binom{u_3}{v_3},\ \binom{u_{-3}}{v_{-3}} \right\},
$$
where $(u_{\pm 3},v_{\pm 3})$ are given by Lemma \ref{m3}.
\end{lemma}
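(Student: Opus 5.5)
We will follow the three-step scheme of Lemma \ref{nondegeneracy}, with $n=2$. By Remark \ref{rmk1} it suffices to treat $m\ge 0$, $m\neq 2$, so the cases are $m=0$, $m=1$, $m=3$ and $m\ge 4$.

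\textbf{Modes $m=0$ and $m=1$.} For $m=0$ we use the kernel element $(0,\Phi_2)^T$, and for $m=1$ the translational element $(\Phi_2',-(2/r)\Phi_2)^T$. By Lemma \ref{asy1}, the solutions bounded near the origin form a two-dimensional space. For $m=0$ it is spanned by $Z_{1,0}\sim r^2(1,1)^T$ and $Z_{4,0}\sim r^2(1,-1)^T$. For $m=1$ it is spanned by $Z_{1,1}\sim r^3(1,1)^T$ and $Z_{2,1}\sim r^{-1}(1,-1)^T$, where the singular behaviour of $Z_{2,1}$ is compensated by the $1/r$ factor in the $v$-component.

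It then suffices to show that $Z_{1,m}$ blows up at infinity. We first prove positivity of its components by the maximum principle. This uses the signs $\mathfrak{C}_2+\mathfrak{D}_2>0$ for $m=0$, and $r^{-2}+\mathfrak{C}_2\pm\mathfrak{D}_2>0$ together with $\mathfrak{A}_2>0$ for $m=1$; these are checked from the explicit $Q_2$. We then run the Wronskian identity against $\Phi_2$ (respectively $\Phi_2'$), exactly as in Steps 1--2. The needed asymptotics are $\Phi_2=1-O(r^{-2})$, $\Phi_2'\sim c\,r^{-3}$ and $\mathfrak{B}_2=O(r^{-3})$. They give polynomial growth $Z_{11}\gtrsim r^3$, hence exponential growth by Lemma \ref{asy2}. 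The only new work is verifying the positivity of the integrands $2\mathfrak{D}_2\Phi_2-r^{-1}(r\mathfrak{B}_2\Phi_2)'$ and $\mathfrak{A}_2\Phi_2\cdot 2/r-(\Phi_2')^{-1}(r\mathfrak{B}_2(\Phi_2')^2)'$. These are explicit rational functions of $r^2$, and we would check them with Mathematica.

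\textbf{Mode $m=3$.} Lemma \ref{asy1} gives the bounded-at-origin solutions $Z_{1,3}\sim r^5$ and $Z_{4,3}\sim r$, consistent with the two free Taylor parameters $\mathbf{u}_1,\mathbf{u}_5$ in the remark after Lemma \ref{m3}. Lemma \ref{asy2} with $\omega=0$ says that at infinity exactly a two-dimensional subspace decays like $r^{-3}$, while the complement grows like $r^{3}$. The explicit element $(u_3,v_3)$ decays like $r^{-3}$. Hence the bounded kernel in mode $3$ is at most two-dimensional, and we must show it is exactly one-dimensional.

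We plan to do this by contradiction. Suppose some $Z=c_1Z_{1,3}+c_4Z_{4,3}$ that is not a multiple of $(u_3,v_3)$ were bounded. Subtracting a multiple of $(u_3,v_3)$, which is $\sim r$ at $0$, we may take $Z$ to be the solution $\sim r^5$ at the origin. We then show that this solution grows at infinity. To do so we pass to $\tilde u=(u+v)/2$, $\tilde v=(u-v)/2$ and use the positivity argument of Step 2. For $m=3$ the coupling $\mathfrak{D}_2$ enters; if it has the right sign, the maximum principle propagates positivity of both components from $r^5(1,1)$, and a Wronskian comparison with $(u_3,v_3)$ then yields growth. If the sign structure fails, we fall back on a rigorous shooting argument: we compute the Wronskian-type invariant between the $r^5$ solution and $(u_3,v_3)$ and show that it is nonzero at infinity, which forces the $r^5$ solution to contain the growing mode.

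\textbf{Modes $m\ge 4$.} Here we adapt the quadratic-form argument of Step 3. Bounded solutions lie in $H^1(rdr)$, and $\mathcal{Q}_m(\tilde u,\tilde v)=0$ for them. We write $\mathcal{Q}_m$ in terms of an $n=2$ analogue of $\mathfrak{E}$, obtained by subtracting $(m\pm2)^2/r^2$ and a suitable rational correction. We then absorb the cross term $\tilde u\tilde v$ via $\gamma_1,\gamma_2$ satisfying the analogue of \eqref{gg2}. Since $\mathcal{A}(m,r)$ and $\mathcal{B}(m,r)$ are monotone in $m$, it is enough to prove $\mathcal{A}(4,r),\mathcal{B}(4,r)>0$, with $\gamma_1$ defined by $\mathcal{A}(4,r)=g(r)$ for a chosen positive $g$.

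\textbf{Main obstacle.} We expect the hardest part to be the $m=3$ uniqueness and the choice of $g$ for $m\ge4$. In the $n=2$ case the potentials are degree-$16$ rational functions, and the margin is small because of the nearby genuine kernel at $m=3$. We would try $g=c(r^2+24)^{-1}$ first, tuning $c$ numerically.
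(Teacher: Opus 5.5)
The genuine gap is in the mode $m=3$, which is where the new content of the lemma lies. Your treatment of $m=0,1$ and $m\ge4$ matches the paper's. There is one slip there: for $n=2$, $m=1$ the origin-regular partner of $Z_{1,1}$ is $Z_{4,1}\sim r(1,-1)^T$, not the singular $Z_{2,1}\sim r^{-1}(1,-1)^T$; indeed $(\Phi_2',-(2/r)\Phi_2)^T\sim\frac{r}{4}(1,-1)^T$ at $0$.

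\textbf{Your primary route fails.} You pass to $\tilde u=(u+v)/2$, $\tilde v=(u-v)/2$. There the $\tilde v$-equation has diagonal potential $9r^{-2}+\mathfrak{C}_2-3\mathfrak{A}_2$, and the maximum-principle step needs it positive. It is not. With $w=(2-\Phi_2^2)^{-1}$, its singular part is $(4w-3)^2r^{-2}\le r^{-2}$. The bounded part $-\frac{(\Phi_2')^2\Phi_2^2}{(2-\Phi_2^2)^2}-\frac32\Phi_2^4+3\Phi_2^2-1$ is close to $-1$ where $\Phi_2$ is small. Numerically the potential is about $-0.45$ at $r=3/2$. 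Also, $Z_{1,3}\sim r^5(1,1)^T$ in $(u,v)$ becomes $(r^5,o(r^5))$ in $(\tilde u,\tilde v)$, so there is no $(1,1)$ datum to propagate.

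\textbf{Your fallback cannot work either.} Since $\mathfrak{B}_2=\frac{d}{dr}\log(2-\Phi_2^2)$, the operator $(2-\Phi_2^2)^{-1}\mathcal{L}_{2,3}$ is formally symmetric. The conserved Wronskian is therefore $W(U,V)=r(2-\Phi_2^2)^{-1}(U'\cdot V-U\cdot V')$. For two solutions regular at the origin, such as $Z_{1,3}\sim r^5$ and $(u_3,v_3)\sim r$, it tends to $0$ as $r\to0$ and so vanishes identically. It is never ``nonzero at infinity'' and cannot detect a growing component.

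\textbf{The fix is to stay in the original variables $(u,v)$, as the paper does.} There the coupling is $3\mathfrak{A}_2>0$ and both diagonal potentials $9r^{-2}+\mathfrak{C}_2\pm\mathfrak{D}_2$ are positive. Since $Z_{1,3}$ starts as $r^5(1,1)^T$, the Step~2 maximum-principle argument gives $Z_{11,3},Z_{12,3}>0$ on $(0,\infty)$.

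Then compare the \emph{second} equation for $Z_{1,3}$ with that for $(u_3,v_3)$, via the non-conserved one-component identity for $r v_3^2\,(Z_{12,3}/v_3)'$. The signs $v_3>0$ and $u_3<0$ from Lemma~\ref{m3} make the source terms positive. The coefficient of $Z_{12,3}$ is $-3t\mathfrak{A}_2u_3-v_3^{-1}(t\mathfrak{B}_2v_3^2)'$, and that of $Z_{11,3}$ is $3t\mathfrak{A}_2v_3$. Together with $v_3\sim r^{-3}$ and $\mathfrak{B}_2=O(r^{-3})$, this yields $Z_{12,3}\gtrsim r^3$. So $Z_{1,3}$ is unbounded, and the bounded mode-$3$ kernel is spanned by $(u_3,v_3)^T$.
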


\begin{proof}
    As in the discussion of Lemma \ref{nondegeneracy}, it suffices to consider the case $m\geq 0$ by Remark \ref{rmk1}.
    The analysis of kernels for the operator $\mathcal{L}_{2,m}$ when $m=0$ and $m=1$ is analogous to Steps 1 and 2 in Lemma \ref{nondegeneracy}. In particular, by comparing with the rotational and translational kernels against $Z_{1,0}$ and $Z_{1,1}$, respectively, one shows that the latter both blow up at infinity, thereby establishing the uniqueness of bounded kernels for $m=0$ and $m=1$. 
    Now we divide the discussion into two cases: $m=3$ and $m\geq 4$. 

    \noindent \textbf{1. The case $m=3$. }By Lemma \ref{m3}, there exists a nontrivial kernel
    $$
    u_{3}=\frac{-24r\left(r^6+32 r^4+384 r^2+1536 \right)}{\left( P_0 \right) ^{3/2}}<0,\quad
    v_{3}=\frac{r}{\sqrt{P_0}}>0,
    $$
    which satisfies
\begin{equation}\label{eq-m3}
    \left\{ \begin{array}{l}
	\left[ -\nabla _{r}^{2}+\mathfrak{B}_2\frac{d }{d r}+\frac{9}{r^2}+\mathfrak{C}_2+\mathfrak{D}_2 \right]u_3 +3\mathfrak{A}_2 v_3=0,\\[0.5em]
	\left[ -\nabla _{r}^{2}+\mathfrak{B}_2\frac{d }{d r}+\frac{9}{r^2}+\mathfrak{C}_2-\mathfrak{D}_2 \right]v_3+3\mathfrak{A}_2 u_3=0.\\
\end{array} \right. 
\end{equation}
By Lemma \ref{asy1}, any element of $\mathrm{Ker}\mathcal{L}_{2,3}$ that is bounded near the origin must be a linear combination of $Z_{1,3}$ and $Z_{4,3}$. Therefore, it suffices to show that $Z_{1,3}$ blows up at infinity. This implies that the bounded kernel of $\mathcal{L}_{2,3}$ consists only of $(u_3,v_3)^T$, up to a multiplicative constant. Note that 
\begin{equation}\label{z1}
    \left\{ \begin{array}{l}
	\left[ -\nabla _{r}^{2}+\mathfrak{B}_2\frac{d }{d r}+\frac{9}{r^2}+\mathfrak{C}_2+\mathfrak{D}_2 \right] Z_{11,3}+3\mathfrak{A}_2Z_{12,3}=0,\\[0.5em]
	\left[ -\nabla _{r}^{2}+\mathfrak{B}_2\frac{d }{d r}+\frac{9}{r^2}+\mathfrak{C}_2-\mathfrak{D}_2 \right] Z_{12,3}+3\mathfrak{A}_2Z_{11,3}=0.\\
\end{array} \right. 
\end{equation}
Using the fact that $9 r^{-2}+\mathfrak{C}_2\pm\mathfrak{D}_2>0$ and $\mathfrak{A}_2>0$, 
together with the discussion in Lemma \ref{nondegeneracy}, we conclude that both $Z_{11,3}$ and $Z_{12,3}$ are strictly positive on $(0,\infty)$. 

Multiplying the second equation of \eqref{eq-m3} by $Z_{12,3}$, the second equation of \eqref{z1} by $v_3$, and subtracting, we obtain
\begin{equation*}
    \begin{aligned}
&\frac{1}{r}\frac{d }{d r}\left( r\frac{d Z_{12,3}}{d r}v_3 \right) -\frac{1}{r}\frac{d }{d r}\left( r\frac{d v_3}{d r}Z_{12,3} \right)\\
&=\mathfrak{B}_2\left( \frac{d Z_{12,3}}{d r}v_3-\frac{d v_3}{d r}Z_{12,3} \right) +3\mathfrak{A}_2v_3Z_{11,3}-3\mathfrak{A}_2u_3Z_{12,3}.
    \end{aligned}
\end{equation*}
Hence, 
\begin{equation*}
    \begin{aligned}
&r (v_3)^2 \frac{d }{d r}\left( \frac{Z_{12,3}}{v_3} \right)\\
&=r\frac{d Z_{12,3}}{d r}v_3-r\frac{d v_3}{d r}Z_{12,3}\\
&=\int_0^r{t\mathfrak{B}_2\left( v_3 \right) ^2\frac{d }{d t}\left( \frac{Z_{12,3}}{v_3} \right) d t}+\int_0^r{\left( 3t\mathfrak{A}_2 v _3 Z_{11,3}-3t\mathfrak{A}_2u_3Z_{12,3} \right) d t}\\
&=r\mathfrak{B}_2 v_3 Z_{12,3}+\int_0^r{\left[ -3 t \mathfrak{A}_2 u_3-\left( v_3 \right) ^{-1}\frac{d }{d t}\left( t\mathfrak{B}_2\left( v_3 \right) ^2 \right) \right]}Z_{12,3}d t+\int_0^r{3t\mathfrak{A}_2 v_3 Z_{11,3} d t}.
    \end{aligned}
\end{equation*}
A direct computation shows that the integrands in the last two terms on the right-hand side are both positive. Note that $\mathfrak{B}_2<0$ and  $\mathfrak{B}_2=O(r^{-3})$ at infinity, and that $v_3$ also exhibits third-order decay, an argument analogous to Step 2 in Lemma \ref{nondegeneracy} implies that $Z_{12,3}\gtrsim r^3$ for $r$ large enough.

\noindent \textbf{2. The case $m\ge 4$. }Similar to Step 3 in Lemma \ref{nondegeneracy}, let us consider the functional
\begin{equation*}
    \begin{aligned}
\mathcal{Q}_m\left( \tilde{u},\tilde{v} \right) :=&\int_0^{\infty}{\left[ \left(\frac{d \tilde{u}}{d r}\right)^2+\mathfrak{B}_2\frac{d \tilde{u}}{d r}\tilde{u}+\left( \frac{m^2}{r^2}+\mathfrak{C}_2+m\mathfrak{A}_2 \right) \tilde{u}^2 \right] rd r}\\
&+\int_0^{\infty}{\left[ \left(\frac{d \tilde{v}}{d r}\right)^2+\mathfrak{B}_2\frac{d \tilde{v}}{d r}\tilde{v}+\left( \frac{m^2}{r^2}+\mathfrak{C}_2-m\mathfrak{A}_2 \right) \tilde{v}^2 \right] rd r}+\int_0^{\infty}{2\mathfrak{D}_2\tilde{u}\tilde{v}rd r},
    \end{aligned}
\end{equation*}
where $\tilde{u}=(u+v)/2,\ \tilde{v}=(u-v)/2$. By Lemmas \ref{asy1} and \ref{asy2}, the above integrals are well-defined. We denote by 
\begin{equation*}
    \begin{aligned}
\mathfrak{E}(r)&=\frac{m^2}{r^2}+\mathfrak{C}_2\pm m\mathfrak{A}_2-\mathfrak{E}_{\pm},\\
\mathfrak{F}\left( \tilde{u},\tilde{v} \right) &=\mathfrak{E}\tilde{u}^2+\mathfrak{E}\tilde{v}^2+2\mathfrak{D}_2\tilde{u}\tilde{v}-\frac{1}{2}\Phi _{2}^{2}\left( \tilde{u}+\tilde{v} \right) ^2,\\
&=\left( \mathfrak{E}-\frac{1}{2}\Phi _{2}^{2} \right) \tilde{u}^2+\left( \mathfrak{E}-\frac{1}{2}\Phi _{2}^{2} \right) \tilde{v}^2+\left( 2\mathfrak{D}_2-\Phi _{2}^{2} \right) \tilde{u}\tilde{v},
    \end{aligned}
\end{equation*}
where 
\begin{equation}\label{Epm}
    \mathfrak{E}_{\pm}(m,r)=\frac{\left( m\pm 2 \right) ^2}{r^2}\pm \frac{4mr^2\left( r^4+48r^2+576 \right)}{r^8+80r^6+1728r^4+18432r^2+73728}.
\end{equation}
As before, the quantity $\mathfrak{E}$ here is independent of $m$. Then 
\begin{equation*}
    \begin{aligned}
\mathcal{Q}_m\left( \tilde{u},\tilde{v} \right) &=\int_0^{\infty}{\left[ \left( \frac{d \tilde{u}}{d r} \right) ^2+\left( \frac{d \tilde{v}}{d r} \right) ^2 -\frac{1}{2}r^{-1}(r\mathfrak{B}_2)'(\tilde{u}^2+\tilde{v}^2)+ \mathfrak{E}_+ (m,r)\tilde{u}^2+\mathfrak{E}_- (m,r) \tilde{v}^2 \right] rd r}\\
&\quad+\frac{1}{2}\int_0^{\infty}{\Phi _{2}^{2}\left( \tilde{u}+\tilde{v} \right) ^2rd r}+\int_0^{\infty}{\mathfrak{F}\left( \tilde{u},\tilde{v} \right) rd r}.
    \end{aligned}
\end{equation*}
Similar to \eqref{gg2}, let us introduce two undetermined real-valued functions $\gamma_{1}(r),\gamma_2(r)$ such that 
\begin{equation*}
    \gamma _1\gamma _2-\gamma _1-\gamma _2=\frac{2\mathfrak{D}_2-\Phi _{2}^{2}}{\Phi _{2}^{2}}.
\end{equation*}
Hence, 
\begin{equation*}
    \begin{aligned}
        \mathcal{Q}_m\left( \tilde{u},\tilde{v} \right)&\geq \int_0^{\infty}{\left[ \mathfrak{E}_+(m,r) -\frac{1}{2}r^{-1}\left( r\mathfrak{B}_2 \right)'+\mathfrak{E}-\frac{1}{2}\Phi _{2}^{2}-\frac{1}{2}\left( \gamma _{1}^{2}-2\gamma _1 \right)\Phi _{2}^{2} \right] \tilde{u}^2rd r}\\
&+\int_0^{\infty}{\left[ \mathfrak{E}_-(m,r)-\frac{1}{2}r^{-1}\left( r\mathfrak{B}_2 \right)'+\mathfrak{E}-\frac{1}{2}\Phi _{2}^{2}-\frac{1}{2}\left( \gamma _{2}^{2}-2\gamma _2 \right)\Phi _{2}^{2} \right] \tilde{v}^2rd r}\\
&=:\int_{0}^{\infty}{\mathcal{A}(m,r) \tilde{u}^2rd r}+\int_{0}^{\infty}{\mathcal{B}(m,r) \tilde{v}^2rd r}.
    \end{aligned}
\end{equation*}
By \eqref{Epm}, for any $r\in(0,\infty)$ and $m\geq 4$, we have 
$$
\mathcal{A}(m,r)\geq \mathcal{A}(4,r),\quad \mathcal{B}(m,r)\geq \mathcal{B}(4,r). 
$$
Hence, it suffices to ensure that 
\begin{equation*}
    \mathcal{A}(4,r)>0\quad \text{and} \quad \mathcal{B}(4,r)>0\quad \text{on } (0,\infty).
\end{equation*}
Let us choose $g(r)=(4+r^2)^{-1}$ and use the numerical software \textit{Mathematica} to determine $\gamma_1(r)>1$ and the corresponding $\gamma_2(r)$ such that for $r\in (0,\infty)$, 
$$
\mathcal{A}(4,r)=g(r)\quad \text{and}\quad  0<\mathcal{B}\left( 4,r \right) =\left\{ \begin{array}{l}
	\frac{4}{r^2}-1+O\left( r^4 \right) ,\quad \text{as\ }r\rightarrow 0,\\[0.5em]
	O\left( \frac{1}{r^2} \right) ,\quad \text{as\ }r\rightarrow \infty .\\
\end{array} \right. 
$$
which implies that $\tilde{u}=\tilde{v}=0$.
\end{proof}

\begin{remark}
    Here we exclude the case $|m|=2$ due to a technical limitation. The main difficulty is that we are unable to exploit the nonexistence of bounded kernels for $|m|\geq 2$ in the case $n=1$ and for $|m|\geq 4$ in the case $n=2$ to construct suitable parameter functions $\gamma_1(r),\gamma_2(r)$ such that, when $|m|=2$ and $n=2$, the corresponding quantities $\mathcal{A}(2,r)$ and $\mathcal{B}(2,r)$ are positive. In contrast, for the GL equation, a long-standing open problem concerns the nonexistence of kernels for degree-$n$ vortex solutions in the case $2<m<2n$. For the CSG2 equation, the existence of a nontrivial kernel in the case $m=3$ allows us to establish the uniqueness of bounded kernels at $m=3$ via comparison principles and Gronwall's inequality.
    However, for the case $m=2$, although numerical evidence strongly suggests the nonexistence of bounded kernels, we are unable—much as in the GL setting—to provide a rigorous proof using the methods developed in this paper. Consequently, the non-degeneracy of the degree-2 solution to the CSG2 equation is not fully established here.
\end{remark}

\begin{lemma}[Stability of $\Psi_1$]
    If there exists $\lambda>0$ and $\phi \in H^1_0(\mathbb{R}^2;\mathbb{C})$ such that $\mathcal{L}_1 \phi=-\lambda \phi$, then $\phi=0$.
\end{lemma}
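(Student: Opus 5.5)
The plan is to decompose $\phi$ into Fourier modes as in \eqref{fourier}, pass to the pairs $(u_m,v_m)$ of \eqref{uv1}--\eqref{uv2}, and reduce the claim to showing that each one-dimensional operator $\mathcal{L}_{1,m}$ has no negative eigenvalue with an $H^1(\mathbb{R}^+,r\,dr)$ eigenfunction. By Remark \ref{rmk1} it suffices to treat $m\ge 0$. The eigenvalue equation $\mathcal{L}_{1,m}(u,v)^T=-\lambda(u,v)^T$ becomes, in the variables $\tilde u=(u+v)/2$, $\tilde v=(u-v)/2$, the identity $\mathcal{Q}_m(\tilde u,\tilde v)=-\lambda\int_0^\infty(\tilde u^2+\tilde v^2)\,r\,dr$. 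To obtain it I will test against the weight $r$; the drift term $\mathfrak{B}_1\frac{d}{dr}$ is treated exactly as in Step 3 of Lemma \ref{nondegeneracy}, by integrating $\mathfrak{B}_1\tilde u\tilde u'$ by parts into $-\frac12 r^{-1}(r\mathfrak{B}_1)'\tilde u^2$. Since $\phi\in H^1_0$, the boundary terms vanish, and the decay and regularity at $0$ and $\infty$ follow from the eigen-asymptotics of Lemma \ref{asy2} for $\omega^2=-\lambda<0$, where the solutions are exponentially growing or decaying.

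\textbf{Case $m\ge 2$.} The lower bound $\mathcal{Q}_m\ge\int\mathcal{A}(m,r)\tilde u^2r\,dr+\int\mathcal{B}(m,r)\tilde v^2r\,dr$ with $\mathcal{A},\mathcal{B}>0$ established in Step 3 of Lemma \ref{nondegeneracy} shows that $\mathcal{Q}_m\ge 0$. Combined with the identity above, this gives $-\lambda\|(\tilde u,\tilde v)\|^2\ge 0$, hence $\tilde u=\tilde v=0$.

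\textbf{Cases $m=0$ and $m=1$.} Here I will use a ground-state (Picone-type) substitution built on the positive kernel elements. For $m=0$ the equations decouple. The $v$-equation uses the operator $-\nabla_r^2+\mathfrak{B}_1\frac{d}{dr}+\mathfrak{C}_1-\mathfrak{D}_1$, which has the positive zero mode $\Phi_1$. Writing $v=\Phi_1 w$ and using the weighted measure $e^{-\int\mathfrak{B}_1}r\,dr$, which makes the operator symmetric and whose weight is bounded above and below by \eqref{exp}, the quadratic form becomes $\int\Phi_1^2|w'|^2\ge 0$. The $u$-equation has potential $\mathfrak{C}_1+\mathfrak{D}_1>0$, so it is positive directly. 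For $m=1$ I will use the positive kernel $(\Phi_1',\,(1/r)\Phi_1)$ of $\mathcal{L}_{1,-1}$, or equivalently $(\Phi_1',-\Phi_1/r)$ after the sign flip $v\mapsto -v$. This makes the off-diagonal coupling $\mathfrak{A}_1$ cooperative, and I will apply the vector ground-state identity (Allegretto--Piepenbrink): for a cooperative symmetric system admitting a positive solution of $\mathcal{L}Z=0$, the form satisfies $\mathcal{Q}(\xi)\ge 0$.

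The main obstacle is the non-self-adjointness introduced by the first-order term $\mathfrak{B}_1\frac{d}{dr}$, together with the sign structure of the coupling. If the symmetrizing weight $e^{-\int\mathfrak{B}_1}$ is used, it must be applied consistently to both components, and the eigenvalue identity must then be read in the weighted $L^2$. This is harmless, because the weight is comparable to $1$. If the vector Picone argument for $m=1$ proves delicate, I would instead argue by a Sturm-type oscillation comparison. A negative-energy eigenfunction would force, through the Wronskian computations of Step 2 of Lemma \ref{nondegeneracy}, a sign change in $Z_{1,1}$, which contradicts the positivity of $Z_{11,1}$ and $Z_{12,1}$ proved there.
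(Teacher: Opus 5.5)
Your argument is correct. For $m\ge 2$ it is the paper's argument: both add $\lambda$ to the pointwise lower bound $\mathcal{Q}_m\ge\int\mathcal{A}\tilde u^2 r\,dr+\int\mathcal{B}\tilde v^2 r\,dr$ from Step 3 of Lemma \ref{nondegeneracy}.

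For $m=0,1$ your route is genuinely different, and better.
\begin{itemize}
\item \textbf{The paper's route.} It reuses the derivative-free device. It completes the square with $\gamma_1,\gamma_2$ subject to \eqref{gg2}, discards $\int[(\tilde u')^2+(\tilde v')^2]\,r\,dr$, and chooses $g(m,r)$ so that $\mathcal{A}=g$ and $\mathcal{B}>0$.
\item \textbf{Your route.} You keep the gradient terms and use the ground-state (Picone/Allegretto--Piepenbrink) representation. It is built on the positive zero modes $\Phi_1$ of the phase block of $\mathcal{L}_{1,0}$ and $(\Phi_1',\Phi_1/r)$ of the cooperative operator $\mathcal{L}_{1,-1}$. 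You work in the symmetrizing measure $\rho\,r\,dr$. This weight is explicit: $\mathfrak{B}_1=\frac{d}{dr}\log(2-\Phi_1^2)$, so $\rho=(2-\Phi_1^2)^{-1}=\frac{r^2+4}{r^2+8}$, and both components share it.
\end{itemize}

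The difference is not cosmetic. These two modes carry zero modes at $\lambda=0$, so the potential matrix is not pointwise nonnegative for small $\lambda$.
\begin{itemize}
\item For $m=1$, $f_-(1,r)+\lambda<0$ near $r=0$ when $\lambda<3/4$.
\item For $m=0$, $f_+(0,\cdot)=f_-(0,\cdot)$, and the phase potential $f_-(0,r)-\mathfrak{D}_1$ behaves like $-16r^{-4}$ at infinity, so it is negative somewhere.
\end{itemize}
Any real completion of the square gives $\mathcal{B}\le f_-+\lambda$. Accordingly, the paper's choices $g(0,r)=c(f_+(0,r)+\lambda)$ with $c>1$, and $g(1,r)>f_+(1,r)+\lambda$, would require $\frac12\Phi_1^2(\gamma_1-1)^2=f_++\lambda-g<0$. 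No real $\gamma_1$ satisfies this. So your ground-state argument is what actually closes $m=0,1$; it also works uniformly in $\lambda\ge0$ and ties stability to the symmetry modes. The paper's pointwise device is still the right tool for $m\ge 2$, where no positive zero mode is available and the centrifugal terms do the work.

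Two small points.
\begin{itemize}
\item The behaviour at $r=0$ comes from the Frobenius exponents of Lemma \ref{asy1}, which the $\lambda$-term does not change, not from Lemma \ref{asy2}, which only concerns $r\to\infty$. You need this at $m=0$: since $r\rho\Phi_1'/\Phi_1\to\frac12$, the boundary term in the ground-state identity is $-\frac12 v(0)^2$. It vanishes only because bounded solutions behave like $r$ there.
\item Positivity of the $m=0$ amplitude part from $\mathfrak{C}_1+\mathfrak{D}_1>0$ is correct in the weighted form you adopt. In the unweighted form you would instead need $\mathfrak{C}_1+\mathfrak{D}_1-\frac12 r^{-1}(r\mathfrak{B}_1)'>0$, which is also true.
\end{itemize}
The Sturm-comparison fallback is not needed.
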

\begin{proof}
    Assume that $\phi$ is expanded according to formula \eqref{fourier}. 
    For each integer $m\geq 0$, let us consider the following eigenvalue problem:
    $$
    \mathcal{L}_{1, m}\binom{u}{v}=-\lambda\binom{u}{v},
    $$
    where $u,v\in H^1_0(\mathbb{R}^+;\mathbb{R})$. It suffices to show that both $u$ and $v$ vanish identically. The proof follows the same strategy as Step 3 in Lemma \ref{nondegeneracy}, with the only difference being the presence of the additional $\lambda$-term. For simplicity, we present only the main steps of the proof.

    Setting $\tilde{u}=(u+v)/2,\quad \tilde{v}=(u-v)/2$, we have 
\begin{equation*}
    \left\{ \begin{array}{l}
	\left[ -\nabla _{r}^{2}+\mathfrak{B}_1\frac{d }{d r}+\frac{m^2}{r^2}+\mathfrak{C}_1+m\mathfrak{A}_1 \right] \tilde{u}+\mathfrak{D}_1 \tilde{v}=-\lambda \tilde{u},\\[0.5em]
	\left[ -\nabla _{r}^{2}+\mathfrak{B}_1\frac{d }{d r}+\frac{m^2}{r^2}+\mathfrak{C}_1-m\mathfrak{A}_1 \right] \tilde{v}+\mathfrak{D}_1 \tilde{u}=-\lambda\tilde{v}.\\
\end{array} \right. 
\end{equation*} 
Define 
\begin{equation*}
    \begin{aligned}
\mathcal{Q}_m\left( \tilde{u},\tilde{v} \right) :=&\int_0^{\infty}{\left[ \left(\frac{d \tilde{u}}{d r}\right)^2+\mathfrak{B}_1\frac{d \tilde{u}}{d r}\tilde{u}+\left( \frac{m^2}{r^2}+\mathfrak{C}_1+m\mathfrak{A}_1+\lambda \right) \tilde{u}^2 \right] rd r}\\
&+\int_0^{\infty}{\left[ \left(\frac{d \tilde{v}}{d r}\right)^2+\mathfrak{B}_1\frac{d \tilde{v}}{d r}\tilde{v}+\left( \frac{m^2}{r^2}+\mathfrak{C}_1-m\mathfrak{A}_1+\lambda \right) \tilde{v}^2 \right] rd r}+\int_0^{\infty}{2\mathfrak{D}_1\tilde{u}\tilde{v}rd r}.
    \end{aligned}
\end{equation*}
Assume that $\mathfrak{E}$, $\gamma_1$ and $\gamma_2$ satisfy formulas \eqref{EF} and \eqref{gg2}. Then we have 
\begin{equation*}
    \begin{aligned}
        \mathcal{Q}_m\left( \tilde{u},\tilde{v} \right)&\geq \int_0^{\infty}{\left[ \frac{m^2}{r^2}+\mathfrak{C}_1+m\mathfrak{A}_1+\lambda-\frac{1}{2}r^{-1}\left( r\mathfrak{B}_1 \right)'-\frac{1}{2}\Phi _{1}^{2}(\gamma_1-1)^2 \right] \tilde{u}^2rd r}\\
&+\int_0^{\infty}{\left[ \frac{m^2}{r^2}+\mathfrak{C}_1-m\mathfrak{A}_1+\lambda-\frac{1}{2}r^{-1}\left( r\mathfrak{B}_1 \right)'-\frac{1}{2}\Phi _{1}^{2}(\gamma_2-1)^2 \right] \tilde{v}^2rd r}\\
&=:\int_{0}^{\infty}{\mathcal{A}(m,r) \tilde{u}^2rd r}+\int_{0}^{\infty}{\mathcal{B}(m,r) \tilde{v}^2rd r}.
    \end{aligned}
\end{equation*}
By the discussion in Step 3 of Lemma \ref{nondegeneracy} and the relation \eqref{EF}, it suffices to consider the cases $m=0$ and $m=1$.
Our goal is to find a suitable positive function $g(m,r)$ such that
\begin{equation*}
    \mathcal{A}(m,r)=g(m,r)\quad \text{and} \quad \mathcal{B}(m,r)>0\quad \text{on } (0,\infty).
\end{equation*}
We set 
$$
f_{\pm}(m,r)=\frac{m^2}{r^2}+\mathfrak{C}_1\pm m\mathfrak{A}_1-\frac{1}{2}r^{-1}\left( r\mathfrak{B}_1 \right)'.
$$
By the condition, we have  
$$
\frac{1}{2}\Phi _{1}^{2}\left( \gamma _2-1 \right) ^2=\frac{1}{2}\Phi _{1}^{2}\left[ \frac{2\mathfrak{D}_1}{\Phi _{1}^{2}\left( \gamma _1-1 \right)} \right] ^2=\frac{\mathfrak{D}_{1}^{2}}{f_+\left( m,r \right) +\lambda -g\left( m,r \right)}.
$$
Therefore, 
$$
\mathcal{B}(m,r)=f_-(m,r)+\lambda-\frac{\mathfrak{D}_{1}^{2}}{f_+\left( m,r \right) +\lambda -g\left( m,r \right)}.
$$
For the case $m=0$, since $f_+\left( 0,r \right)>f_-\left( 0,r \right)>0$, we can choose $g(0,r)=c (f_+(0,r)+\lambda)$, where $c>1$, to ensure that $\mathcal{B}(0,r)>\lambda>0$. As for the case $m=1$, note that $f_+\left( 1,r \right)>0$ and 
$$
\min_{r\in(0,\infty)} f_-\left( 1,r \right)=-\frac{3}{4}<0.
$$
Hence, to ensure $\mathcal{B}(1,r)>\lambda>0$, it suffices to choose $g(1,r)>f_+\left( 1,r \right) +\lambda>0$ satisfying 
$$
\frac{\mathfrak{D}_{1}^{2}}{f_+\left( 1,r \right) +\lambda -g\left( 1,r \right)}=\min_{r\in(0,\infty)} f_-\left( 1,r \right)=-\frac{3}{4}.
$$
\end{proof}

\bigskip
\section{Lyapunov-Schmidt reduction analysis for solutions with large parameter}\label{section6}

In this section, we analyze the  multivortex solutions for the CSG2 equation from the perspective of the Lyapunov-Schmidt reduction framework. Formally, this analysis also tells us (more or less well known) that the vortex points, if they are far away from each other, should be determined by a pair of consecutive Adler-Moser polynomials, whose roots are approximately these vortex points, since the roots should satisfy a balancing condition.

Let us consider $(n+1)$-tuples (referred to as vortex configuration): 
\begin{equation*}
    \mathbf{q}=(q_0,q_1,\cdots,q_{n})\in \mathbb{C}^{n+1}\simeq \mathbb{R}^{2(n+1)},\quad (\sigma_0,\sigma_1,\cdots,\sigma_n)\in \{-1,1\}^{n+1}.
\end{equation*}
Let $I_{\pm}$ denote the set of indices $j$ such that $\sigma_j=\pm1$, respectively.
Recall that the degree-1 vortex solution to the CSG2 equation is given by $\Psi_1(z)=\Phi_1(r)e^{i\theta}$ and $\Psi_{-1}(z)=\overline{\Psi_1(z)}=\Phi_1(r)e^{-i\theta}$. 

From now on, we denote by $(r_j,\theta_j)$ the polar coordinates with respect to the point $q_j$ and set $z_j=z-q_j=x_j+i y_j$.
Let us define the approximate $n$-vortex solution to the CSG2 equation as follows:
\begin{equation*}
    u^*=\prod_{j=0}^{n}{\Psi_{\sigma_j}(z_j)}=\prod_{j=0}^{n}{\Phi_{1}(r_j)e^{i\sigma_j\theta_j}}.
\end{equation*}
For the solution $u$ to the CSG2 equation, we set $u=u^*+\phi$, where $\phi$ is the perturbation term. Then we can rewrite the CSG2 equation as
\begin{equation*}
\begin{aligned}
    E(u^*+\phi)&=\Delta (u^*+\phi)
+
\frac{\left[\nabla (u^*+\phi)\right]^2\left(\overline{u^*}+\bar{\phi}\right)}
{2-\left|u^*+\phi\right|^2}
\\
&
+
\frac{1}{2}(u^*+\phi)
\left(1-\left|u^*+\phi\right|^2\right)
\left(2-\left|u^*+\phi\right|^2\right)
=0.
\end{aligned}
\end{equation*}
Setting $\phi=u^* \psi$, we have 
\begin{equation}\label{linearized}
    \mathbb{L}^*\psi=(u^*)^{-1}E(u^*)+(u^*)^{-1}N(\phi),
\end{equation}
where $\mathbb{L}^*$ is the linearized operator at $u^*$ given by
\begin{equation*}
    \begin{aligned}
        \mathbb{L}^*\psi &=-\Delta \psi +|u^*|^2\left( 2-|u^*|^2 \right) \mathrm{Re}\psi +|u^*|^2\left( 1-|u^*|^2 \right) \mathrm{Re}\psi\\
&\quad -\frac{4\left( \nabla u^* \right) ^2\overline{u^*}}{\left( 2-|u^*|^2 \right) u^*}\mathrm{Re}\psi-\frac{4\nabla u^*\nabla \psi}{\left( 2-|u^*|^2 \right) u^*},
    \end{aligned}   
\end{equation*}
and $E(u^*)$ is the error term, and $N(\phi)$ is the higher-order perturbation term. Here we explain the relationship between $\mathbb{L}^*$ and the previously introduced linearized operator $\mathcal{L}_1$ (see \eqref{Ln}). In fact, if we replace $u^*$ by $\Psi_{\sigma_j}$ in the expression of $\mathbb{L}^*$ and denote the corresponding operator as $\mathbb{L}_{\sigma_j}$, then we have
\begin{equation*}
    \mathbb{L}_{\sigma_j}\psi =\Phi_1^{-1}\mathcal{L}_{\sigma_j}(\Phi_1 \psi).  
\end{equation*}
Hence, we say that $\mathbb{L}_{\sigma_j}$ and $\mathcal{L}_{\sigma_j}$ are \textbf{conjugate operators} to each other. 

We require the vortex points $\mathbf{q}$ satisfying the following configuration conditions:
\begin{itemize}
    \item $q_j=\sum_{k=1}^{\infty}\alpha^{3-2k}q_{j,k}$, where $q_{j,k}=O(1)$ for each $j,k$, and $\alpha\gg 0$.
    \item $\min\{|q_{j,1}-q_{\ell,1}|\ :\ j\neq\ell\}\sim \alpha$.
    \item The leading term of $q_j$, i.e. $q_{j,1}$, satisfies the following \textbf{balancing condition}:
    \begin{equation}\label{BC}
        \sum_{\ell\neq j}\frac{\sigma_{\ell}}{q_{j,1}-q_{\ell,1}}=0,\quad\text{for } j=0,1,\cdots,n.
    \end{equation}
\end{itemize}

For Eq.~\eqref{linearized}, we now compute the corresponding error term on the right-hand side. We shall see that, under the above vortex configuration condition, the corresponding error term is of order $O(\alpha^{-2})$ near each single vortex.

\begin{lemma}\label{Error}
    $$
    E(u^*)=u^*(E_1+i E_2),
    $$
    where the real-valued functions
    $$
    E_1=E_{11}+E_{12}+E_{13}+E_{14},\quad E_2=E_{21}+E_{22}, 
    $$
    and 
    \begin{equation*}
    \begin{aligned}
        E_{11}&=\frac{-2}{2-|u^*|^2}\sum_{j=0}^n{\sum_{\ell \ne j}{\frac{\sigma _j}{r_j}\frac{\sigma _{\ell}}{r_{\ell}}  \cos \left( \theta _j-\theta _{\ell} \right)}},\\
        E_{12}&=\frac{2}{2-|u^*|^2}\sum_{j=0}^n{\sum_{\ell \ne j}{\frac{\Phi _{1}'\left( r_j \right)}{\Phi _1\left( r_j \right)}\frac{\Phi _{1}'\left( r_{\ell} \right)}{\Phi _1\left( r_{\ell} \right)}  \cos \left( \theta _j-\theta _{\ell} \right)}},\\
E_{13}&=\sum_{j=0}^n{\left(\frac{|u^*|^2}{2-|u^*|^2}- \frac{\Phi _1\left( r_j \right) ^2}{2-\Phi_{1}(r_j)^2} \right) \left( \frac{\Phi _{1}'\left( r_j \right) ^2}{\Phi _1\left( r_j \right) ^2}-\frac{1}{r_{j}^{2}} \right)},\\
E_{14}&=\frac{1}{2}\left( 1-|u^*|^2 \right) \left( 2-|u^*|^2 \right) -\frac{1}{2}\sum_{j=0}^n{\left( 1-\Phi_{1}(r_j)^2 \right) \left( 2-\Phi_{1}(r_j)^2 \right)},\\
E_{21}&=\frac{2}{2-|u^*|^2}\sum_{j=0}^n{\sum_{\ell \ne j}{ \frac{\Phi _{1}'\left( r_j \right)}{\Phi _1\left( r_j \right)}\frac{\sigma _{\ell}}{r_{\ell}}}}\sin \left( \theta _j-\theta _{\ell} \right),\\
E_{22}&=\frac{-2}{2-|u^*|^2}\sum_{j=0}^n{\sum_{\ell \ne j}{\frac{\Phi _{1}'\left( r_{\ell} \right)}{\Phi _1\left( r_{\ell} \right)}\frac{\sigma _j}{r_j}}}\sin \left( \theta _j-\theta _{\ell} \right).
    \end{aligned}
\end{equation*}  
\end{lemma}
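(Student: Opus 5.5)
The plan is to compute $E(u^*)/u^*$ through the logarithmic gradient of $u^*$, so that the single-vortex equations can be subtracted term by term. Away from the vortex points $q_j$, write $u^*=\prod_j\Psi_{\sigma_j}(z_j)$ and set
\[
w_j:=\frac{\nabla \Psi_{\sigma_j}(z_j)}{\Psi_{\sigma_j}(z_j)}=a_j\,e_{r_j}+i\frac{\sigma_j}{r_j}\,e_{\theta_j},\qquad a_j:=\frac{\Phi_1'(r_j)}{\Phi_1(r_j)},
\]
where $e_{r_j}=(\cos\theta_j,\sin\theta_j)$ and $e_{\theta_j}=(-\sin\theta_j,\cos\theta_j)$. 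Then $\nabla u^*=u^*W$ with $W=\sum_j w_j$. This gives
\[
\Delta u^*=u^*\Big(\sum_j \operatorname{div}w_j+W\cdot W\Big),\qquad (\nabla u^*)^2=(u^*)^2\,W\cdot W,
\]
where $W\cdot W$ is the complex bilinear (not Hermitian) dot product. Since $|u^*|^2=\prod_j\Phi_1(r_j)^2$, dividing the CSG2 operator by $u^*$ yields
\[
\frac{E(u^*)}{u^*}=\sum_j\operatorname{div}w_j+\frac{2\,W\cdot W}{2-|u^*|^2}+\frac12(1-|u^*|^2)(2-|u^*|^2),
\]
using $1+\frac{|u^*|^2}{2-|u^*|^2}=\frac{2}{2-|u^*|^2}$.

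Next I eliminate $\operatorname{div}w_j$ using the equation satisfied by each single vortex $\Psi_{\sigma_j}$. The same computation applied to $\Psi_{\sigma_j}$ alone gives
\[
\operatorname{div}w_j=-\frac{2\,w_j\cdot w_j}{2-\Phi_1(r_j)^2}-\frac12\big(1-\Phi_1(r_j)^2\big)\big(2-\Phi_1(r_j)^2\big).
\]
Substituting this and splitting $W\cdot W=\sum_j w_j\cdot w_j+\sum_{j\neq\ell}w_j\cdot w_\ell$ separates the error into three groups.

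The \emph{potential terms} give exactly $E_{14}$.

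The \emph{diagonal terms} combine as
\[
2\,w_j\cdot w_j\Big(\frac{1}{2-|u^*|^2}-\frac{1}{2-\Phi_1(r_j)^2}\Big).
\]
Here $w_j\cdot w_j=a_j^2-1/r_j^2$ is real, because $e_{r_j}\cdot e_{\theta_j}=0$. Using the identity
\[
\frac{2}{2-|u^*|^2}-\frac{2}{2-\Phi_1^2}=\frac{|u^*|^2}{2-|u^*|^2}-\frac{\Phi_1^2}{2-\Phi_1^2},
\]
this is exactly $E_{13}$.

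The \emph{off-diagonal terms} $\frac{2}{2-|u^*|^2}\sum_{j\ne\ell}w_j\cdot w_\ell$ are expanded with the elementary identities
\[
e_{r_j}\cdot e_{r_\ell}=e_{\theta_j}\cdot e_{\theta_\ell}=\cos(\theta_j-\theta_\ell),\qquad e_{r_j}\cdot e_{\theta_\ell}=\sin(\theta_j-\theta_\ell)=-\,e_{\theta_j}\cdot e_{r_\ell}.
\]
The real part $a_ja_\ell\cos(\theta_j-\theta_\ell)-\frac{\sigma_j\sigma_\ell}{r_jr_\ell}\cos(\theta_j-\theta_\ell)$ produces $E_{12}$ and $E_{11}$. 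The imaginary part $a_j\frac{\sigma_\ell}{r_\ell}\sin(\theta_j-\theta_\ell)-a_\ell\frac{\sigma_j}{r_j}\sin(\theta_j-\theta_\ell)$ produces $E_{21}$ and $E_{22}$.

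Finally, the formula is extended across the points $q_j$ by continuity, since both sides are smooth there. The main obstacle is only bookkeeping: keeping the complex bilinear product distinct from the Hermitian one, and tracking the sign conventions of the $\sin$ terms. I would double-check these on the case $n=1$, or against the single-vortex identity with $\ell$ terms absent.
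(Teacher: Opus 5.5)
Your proposal is correct and follows the paper's argument. Both proofs write $\nabla u^*$ and $\Delta u^*$ through the logarithmic derivatives of the factors $\Psi_{\sigma_j}(z_j)$, eliminate $\Delta\Psi_{\sigma_j}/\Psi_{\sigma_j}$ with the single-vortex CSG2 equation, and split the result into potential, diagonal and off-diagonal pieces, then into real and imaginary parts. The only difference is notation: you use a complex bilinear dot product of $e_{r_j},e_{\theta_j}$, where the paper uses $\partial=\partial_x-i\partial_y$, $\bar\partial=\partial_x+i\partial_y$ and phases $e^{\pm i\theta_j}$.

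One small correction to your last step: $E_1$ and $E_2$ themselves blow up like $1/r_j$ at $q_j$, so they are not smooth there. The lemma is therefore a pointwise identity on $\mathbb{R}^2\setminus\{q_0,\dots,q_n\}$, and only the product $u^*(E_1+iE_2)=E(u^*)$ extends smoothly across the vortex points.
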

\begin{proof}
    For $z=x+iy=re^{i\theta}$ and $\bar{z}=x-iy=re^{-i\theta}$, we set 
    \begin{equation*}
        \begin{aligned}
            \partial&=2\partial_z=\partial_x-i\partial_y=e^{-i\theta}\left(\partial_r -\frac{i}{r}\partial_{\theta}\right),\\
            \bar{\partial}&=2\partial_{\bar{z}}=\partial_x+i\partial_y=e^{i\theta}\left(\partial_r +\frac{i}{r}\partial_{\theta}\right).
        \end{aligned}
    \end{equation*}
    The above expression remains valid if the polar coordinates are replaced by $(r_j,\theta_j)$. 
By direct computation, we have 
\begin{equation*}
    \begin{aligned}
\Delta u^*=&\partial \bar{\partial}\left( \prod_{j=0}^n{\Psi_{\sigma_j}(z_j)} \right) =\partial \left( u^*\sum_{j=0}^n{\frac{\bar{\partial}\left(\Psi_{\sigma_j}(z_j)\right)}{\Psi_{\sigma_j}(z_j)}} \right) \\
=&u^*\left( \sum_{j=0}^n{\sum_{\ell \ne j}{\frac{\bar{\partial}\left(\Psi_{\sigma_j}(z_j)\right)}{\Psi_{\sigma_j}(z_j)}\cdot \frac{\partial \left(\Psi_{\sigma_\ell}(z_\ell)\right)}{\Psi_{\sigma_\ell}(z_\ell)}}}+\sum_{j=0}^n{\frac{\Delta \left(\Psi_{\sigma_j}(z_j)\right)}{\Psi_{\sigma_j}(z_j)}} \right)\\
=&u^*\left( \sum_{j=0}^n{\sum_{\ell \ne j}{\frac{\bar{\partial}\left(\Psi_{\sigma_j}(z_j)\right)}{\Psi_{\sigma_j}(z_j)}\cdot \frac{\partial \left(\Psi_{\sigma_\ell}(z_\ell)\right)}{\Psi_{\sigma_\ell}(z_\ell)}}}\right.\\
&\quad \left.
-\sum_{j=0}^n{\frac{\left[ \nabla \left(\Psi_{\sigma_j}(z_j)\right) \right] ^2}{2-\Phi_{1}(r_j)^2}\cdot \frac{\overline{\Psi_{\sigma_j}(z_j)}}{\Psi_{\sigma_j}(z_j)}}-\frac{1}{2}\sum_{j=0}^n{\left( 1-\Phi_{1}(r_j)^2 \right) \left( 2-\Phi_{1}(r_j)^2 \right)} \right),
    \end{aligned}
\end{equation*}
and
\begin{equation*}
    \begin{aligned}
        \frac{\left( \nabla u^* \right)^2 \overline{u^*}}{2-|u^*|^2}
        &=
        \frac{\overline{u^*}}{2-|u^*|^2}
        \left(
            u^* \sum_{j=0}^n
            \frac{\partial \left(\Psi_{\sigma_j}(z_j)\right)}
            {\Psi_{\sigma_j}(z_j)}
        \right)
        \left(
            u^* \sum_{j=0}^n
            \frac{\bar{\partial}\left(\Psi_{\sigma_j}(z_j)\right)}
            {\Psi_{\sigma_j}(z_j)}
        \right) \\
        &=
        \frac{|u^*|^2 u^*}{2-|u^*|^2}
        \sum_{j,\ell=0}^n
        \frac{\bar{\partial}\left(\Psi_{\sigma_j}(z_j)\right)}
        {\Psi_{\sigma_j}(z_j)}
        \cdot
        \frac{\partial \left(\Psi_{\sigma_\ell}(z_\ell)\right)}
        {\Psi_{\sigma_\ell}(z_\ell)} .
    \end{aligned}
\end{equation*}
Hence, 
\begin{equation}\label{E1}
    \begin{aligned}
        E(u^*)&=u^*\left( \sum_{j=0}^n{\sum_{\ell \ne j}{\frac{\bar{\partial}\left(\Psi_{\sigma_j}(z_j)\right)}{\Psi_{\sigma_j}(z_j)}\cdot \frac{\partial \left(\Psi_{\sigma_\ell}(z_\ell)\right)}{\Psi_{\sigma_\ell}(z_\ell)}}}\right.\\
        &\quad+\frac{|u^*|^2}{2-|u^*|^2}
        \sum_{j,\ell=0}^n
        \frac{\bar{\partial}\left(\Psi_{\sigma_j}(z_j)\right)}
        {\Psi_{\sigma_j}(z_j)}
        \cdot
        \frac{\partial \left(\Psi_{\sigma_\ell}(z_\ell)\right)}
        {\Psi_{\sigma_\ell}(z_\ell)}-\sum_{j=0}^n{\frac{\left[ \nabla \left(\Psi_{\sigma_j}(z_j)\right) \right] ^2}{2-\Phi_{1}(r_j)^2}\cdot \frac{\overline{\Psi_{\sigma_j}(z_j)}}{\Psi_{\sigma_j}(z_j)}}\\
        &\quad+\left.\frac{1}{2}(1-|u^*|^2)(2-|u^*|)^2-\frac{1}{2}\sum_{j=0}^n{\left( 1-\Phi_{1}(r_j)^2 \right) \left( 2-\Phi_{1}(r_j)^2 \right)}\right).
    \end{aligned}
\end{equation}
Note that 
\begin{equation*}
    \begin{aligned}
        \frac{\bar{\partial}\left(\Psi_{\sigma_j}(z_j)\right)}
        {\Psi_{\sigma_j}(z_j)}&=\left( \frac{\Phi _{1}'\left( r_j \right)}{\Phi _1\left( r_j \right)}-\frac{\sigma _j}{r_j} \right) e^{i\theta _j},\\
        \frac{\partial\left(\Psi_{\sigma_j}(z_j)\right)}
        {\Psi_{\sigma_j}(z_j)}&=\left( \frac{\Phi _{1}'\left( r_j \right)}{\Phi _1\left( r_j \right)}+\frac{\sigma _j}{r_j} \right) e^{-i\theta _j},\\
        \frac{\left[ \nabla \left(\Psi_{\sigma_j}(z_j)\right) \right] ^2}{2-\Phi_{1}(r_j)^2}\cdot \frac{\overline{\Psi_{\sigma_j}(z_j)}}{\Psi_{\sigma_j}(z_j)}&=\Phi _{1}'\left( r_j \right) ^2-\frac{1}{r_{j}^{2}}\Phi _1\left( r_j \right) ^2.
    \end{aligned}
\end{equation*}
Substituting the above expressions into the error term \eqref{E1} and separating the terms in parentheses into their real and imaginary parts, we can obtain the desired expansions for $E_1$ and $E_2$. 
\end{proof}

Choose the positive constant
$$
\delta<\frac{1}{4\alpha}\min\{|q_j-q_\ell|\ :\ j\neq\ell\},
$$
and for each $j$, set 
$$
\Omega_j=\{|z-q_j|\leq \delta \alpha\}.
$$
Then for $z\in\Omega_j$ and $\ell\ne j$, 
by the balancing condition \eqref{BC}, we have
\begin{equation}\label{cos}
    \begin{aligned}
        \sum_{\ell\ne j}\frac{\sigma_\ell}{r_\ell}\cos\theta_\ell&=\sum_{\ell\ne j}\sigma_\ell\mathrm{Re}\left[\frac{z-q_j+q_j-q_\ell}{|z-q_j+q_j-q_\ell|^2}\right]\\
        &=\sum_{\ell\neq j}\sigma_\ell \mathrm{Re}\left[\frac{q_{j,1}-q_{\ell,1}}{|q_{j,1}-q_{\ell,1}|^2}\right]+O(\alpha^{-2}r_j)\\
        &=O(\alpha^{-2}r_j). 
    \end{aligned}
\end{equation}
Similarly, 
\begin{equation}\label{sin}
    \sum_{\ell\ne j}\frac{\sigma_\ell}{r_\ell}\sin\theta_\ell=O(\alpha^{-2}r_j).
\end{equation}
Hence, it follows from Lemma \ref{Error}, \eqref{cos} and \eqref{sin} that
$$
\Phi_1(r_j)E_{11}=\frac{-2\Phi_1(r_j)}{2-|u^*|^2}\sum_{k=0}^n\sum_{\ell \ne k}{\frac{\sigma _k}{r_k}\frac{\sigma _{\ell}}{r_{\ell}}  \cos \left( \theta _k-\theta _{\ell} \right)}=O(\alpha^{-2}),
$$
$$
\Phi_1(r_j)E_2=\frac{4\Phi_1(r_j)}{2-|u^*|^2}\sum_{k=0}^n\sum_{\ell \ne k}{\frac{\Phi _{1}'\left( r_k \right)}{\Phi _1\left( r_k \right)}\frac{\sigma _{\ell}}{r_{\ell}}  \sin \left( \theta _k-\theta _{\ell} \right)}=O(\alpha^{-2}).
$$
Meanwhile, $\Phi_1(r_j)E_{13}$ and $\Phi_1(r_j)E_{14}$ are of $O(\alpha^{-2})$, whereas $\Phi_1(r_j)E_{12}$ is of $O(\alpha^{-3})$.  

We now explain why the configuration condition requires the leading-order vortex locations to satisfy the balancing condition in the following lemma. Indeed, near each single vortex, the degree-1 linearized theory requires the leading term of the error projection onto the translational kernel to vanish.
\begin{lemma}
    For $z\in \Omega_j$, we have
    \begin{align*}
        \begin{aligned} 
            & \mathrm{Re}\left[\overline{\Psi_{\sigma_{j}}\left(z_{j}\right)\left(E_1+iE_2\right)}\partial_{x}\left(\Psi_{\sigma_{j}}\left(z_{j}\right)\right)\right]\\ 
            = & \frac{-4}{2-\Phi _1\left( r_j \right)^2}\Phi_1\left(r_{j}\right)\Phi'_1\left(r_{j}\right)\frac{\sigma_{j}}{r_{j}}\sum_{\ell\neq j}{\left[\alpha^{-1}\mathrm{Re}\left(\frac{\sigma_{\ell}}{q_{j,1}-q_{\ell,1}}\right)\right]}+O\left(\alpha^{-2}(1+r_{j})^{-3}\right),\\[0.5em]
            & \mathrm{Re}\left[\overline{\Psi_{\sigma_{j}}\left(z_{j}\right)\left(E_1+iE_2\right)}\partial_{y}\left(\Psi_{\sigma_{j}}\left(z_{j}\right)\right)\right]\\ 
            = & \frac{4}{2-\Phi _1\left( r_j \right)^2}\Phi_1\left(r_{j}\right)\Phi'_1\left(r_{j}\right)\frac{\sigma_{j}}{r_{j}}\sum_{\ell\neq j}{\left[\alpha^{-1}\mathrm{Im}\left(\frac{\sigma_{\ell}}{q_{j,1}-q_{\ell,1}}\right)\right]}+O\left(\alpha^{-2}(1+r_{j})^{-3}\right).
        \end{aligned}
    \end{align*}
    Therefore, if \(\{q_{j,1}\}\) satisfies the balancing condition, the \(O(\alpha^{-1})\) terms on the right-hand side of the above expressions vanish. 
\end{lemma}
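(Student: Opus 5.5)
The plan is to expand $\overline{\Psi_{\sigma_j}(z_j)(E_1+iE_2)}\,\partial_x(\Psi_{\sigma_j}(z_j))$ term by term using the explicit form of $E_1,E_2$ from Lemma~\ref{Error}, and to keep only the contributions linear in the far-field quantities $\sigma_\ell/r_\ell$ and $\theta_\ell$ for $\ell\ne j$.

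We first record the local quantities. We have $\partial_x(\Psi_{\sigma_j}(z_j))=\frac12\big[\partial+\bar\partial\big]\Psi_{\sigma_j}(z_j)$, and by the formulas in the proof of Lemma~\ref{Error},
\[
\overline{\Psi_{\sigma_j}}\,\partial_x\Psi_{\sigma_j}=\tfrac12\Phi_1^2\Big[\big(\tfrac{\Phi_1'}{\Phi_1}+\tfrac{\sigma_j}{r_j}\big)e^{-i\theta_j}+\big(\tfrac{\Phi_1'}{\Phi_1}-\tfrac{\sigma_j}{r_j}\big)e^{i\theta_j}\Big]
=\Phi_1\Phi_1'\cos\theta_j-i\sigma_j\tfrac{\Phi_1^2}{r_j}\sin\theta_j .
\]
The real part of $\overline{(E_1+iE_2)}\,\overline{\Psi}\partial_x\Psi$ is therefore $E_1\Phi_1\Phi_1'\cos\theta_j+E_2\sigma_j\frac{\Phi_1^2}{r_j}\sin\theta_j$, where everything is evaluated at $r_j$. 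The analogous computation for $\partial_y$ replaces $\cos\theta_j$ by $\sin\theta_j$ and $-\sin\theta_j$ by $\cos\theta_j$.

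Next we isolate the $O(\alpha^{-1})$ pieces in $\Omega_j$. For $\ell\ne j$ we have $r_\ell\sim\alpha$, so $\Phi_1'(r_\ell)/\Phi_1(r_\ell)=O(\alpha^{-3})$ by \eqref{asy}, and $1-\Phi_1(r_\ell)^2=O(\alpha^{-2})$. Consequently, in $E_{11}$, $E_{21}$ and $E_{22}$ only the cross terms pairing the index $j$ with some $\ell\ne j$ survive at order $\alpha^{-1}$; all $\ell,k\ne j$ pairs are $O(\alpha^{-2})$. Likewise $E_{12}=O(\alpha^{-3})$, while $E_{13}$ and $E_{14}$ are $O(\alpha^{-2})$ times decaying factors, since $|u^*|^2=\Phi_1(r_j)^2(1+O(\alpha^{-2}))$. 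We also replace $2-|u^*|^2$ by $2-\Phi_1(r_j)^2$ at the cost of $O(\alpha^{-2})$.

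Using $\sigma_\ell e^{i\sigma_\ell\theta_\ell}/r_\ell$ and $\sum_{\ell\neq j}\sigma_\ell e^{-i\theta_\ell}/r_\ell=\sum_{\ell\ne j}\sigma_\ell/(z-q_\ell)$, we expand around $z=q_j$:
\[
\sum_{\ell\ne j}\frac{\sigma_\ell}{z-q_\ell}=\alpha^{-1}\sum_{\ell\ne j}\frac{\sigma_\ell}{q_{j,1}-q_{\ell,1}}+O(\alpha^{-2}(1+r_j)).
\]
This is exactly the computation in \eqref{cos}--\eqref{sin}, but now without assuming the balancing condition. Writing $S_j:=\alpha^{-1}\sum_{\ell\ne j}\sigma_\ell/(q_{j,1}-q_{\ell,1})$, the surviving terms become
\[
E_{11}\approx\frac{-4\sigma_j}{(2-\Phi_1^2)r_j}\,\mathrm{Re}\big(e^{i\theta_j}S_j\big),\qquad
E_2\approx\frac{4}{2-\Phi_1^2}\,\frac{\Phi_1'}{\Phi_1}\,\mathrm{Im}\big(e^{i\theta_j}S_j\big)\ (\text{up to sign}).
\]

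Substituting into the expression from the first step and collecting terms, the products $\cos\theta_j\,\mathrm{Re}(e^{i\theta_j}S_j)$ and $\sin\theta_j\,\mathrm{Im}(e^{i\theta_j}S_j)$ should combine into the stated $\frac{-4}{2-\Phi_1^2}\Phi_1\Phi_1'\frac{\sigma_j}{r_j}\mathrm{Re}\,S_j$ plus pure angular harmonics $e^{\pm 2i\theta_j}$. The paper's statement suggests these harmonics cancel pointwise, or at least that only the mode-$\pm1$ coefficient matters. This step is the main obstacle: the sign bookkeeping between the $E_{11}$ and $E_{21}$/$E_{22}$ contributions has to be done carefully so that the $\cos2\theta_j$ and $\sin2\theta_j$ pieces cancel, which relies on $\sigma_j^2=1$. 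If exact cancellation fails, I would instead state the identity after integrating over $\theta_j$, which is all the reduction uses.

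Finally, the remainder bound $O(\alpha^{-2}(1+r_j)^{-3})$ comes from the $O(\alpha^{-2}(1+r_j))$ Taylor remainders multiplied by the local weights $\Phi_1\Phi_1'=O(r_j^{-3})$ and $\Phi_1^2/r_j\cdot\Phi_1'/\Phi_1=O(r_j^{-4})$ at large $r_j$, together with boundedness near $r_j=0$. The $y$-derivative case follows identically, with $\mathrm{Im}\,S_j$ replacing $\mathrm{Re}\,S_j$. When \eqref{BC} holds we have $S_j=0$, so the $O(\alpha^{-1})$ terms vanish.
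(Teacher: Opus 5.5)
Your route is the paper's: keep only the $j$--$\ell$ cross terms of $E_{11}$ and $E_2$, multiply by $\overline{\Psi_{\sigma_j}}\,\partial_x\Psi_{\sigma_j}$, and expand $\sum_{\ell\ne j}\sigma_\ell/(z-q_\ell)$ about $q_j$. However, the proposal stops at the only nontrivial step, and the formula you set up for that step has a sign error. Since $\overline{E_1+iE_2}=E_1-iE_2$ and $\overline{\Psi_{\sigma_j}}\,\partial_x\Psi_{\sigma_j}=\Phi_1\Phi_1'\cos\theta_j-i\sigma_j\Phi_1^2r_j^{-1}\sin\theta_j$, the real part is
\[
E_1\,\Phi_1\Phi_1'\cos\theta_j-E_2\,\sigma_j\frac{\Phi_1^2}{r_j}\sin\theta_j ,
\]
with a minus sign. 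Take your leading terms $E_{11}\approx-\frac{4\sigma_j}{(2-\Phi_1^2)r_j}\mathrm{Re}(e^{i\theta_j}S_j)$ and $E_2\approx\frac{4}{2-\Phi_1^2}\frac{\Phi_1'}{\Phi_1}\mathrm{Im}(e^{i\theta_j}S_j)$; both are correct, including the $+$ sign for $E_2$. Inserted into your plus-sign version, the bracket becomes $\cos\theta_j\,\mathrm{Re}(e^{i\theta_j}S_j)-\sin\theta_j\,\mathrm{Im}(e^{i\theta_j}S_j)=\mathrm{Re}(e^{2i\theta_j}S_j)$. That is a pure second harmonic with no $\mathrm{Re}\,S_j$ component at all. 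So the obstacle you anticipate is an artifact of the sign. Your fallback of averaging over $\theta_j$ would not rescue anything either: the lemma is a pointwise identity, and with your sign the average of the leading term is zero rather than the claimed coefficient.

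With the correct sign the bracket is $\cos\theta_j\,\mathrm{Re}(e^{i\theta_j}S_j)+\sin\theta_j\,\mathrm{Im}(e^{i\theta_j}S_j)=\mathrm{Re}(e^{-i\theta_j}e^{i\theta_j}S_j)=\mathrm{Re}\,S_j$ exactly. The paper does this before expanding in $\alpha$. By $\cos\theta_j\cos(\theta_j-\theta_\ell)+\sin\theta_j\sin(\theta_j-\theta_\ell)=\cos\theta_\ell$, the $O(\alpha^{-1})$ part collapses pointwise to
\[
\frac{-4}{2-\Phi_1(r_j)^2}\,\Phi_1(r_j)\Phi_1'(r_j)\,\frac{\sigma_j}{r_j}\sum_{\ell\ne j}\frac{\sigma_\ell}{r_\ell}\cos\theta_\ell .
\]
Only then is $\sum_{\ell\ne j}\sigma_\ell r_\ell^{-1}\cos\theta_\ell=\mathrm{Re}\sum_{\ell\ne j}\sigma_\ell/(z-q_\ell)$ expanded as in \eqref{cos}. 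There are no $e^{\pm2i\theta_j}$ terms to cancel, and $\sigma_j^2=1$ plays no role. Each of the two contributions carries exactly one $\sigma_j$ (from $\sigma_j/r_j$ in $E_{11}$, resp.\ from the phase of $\Psi_{\sigma_j}$) and the same radial weight $\Phi_1\Phi_1'/r_j$. The $\partial_y$ case is identical, using $\sin\theta_j\cos(\theta_j-\theta_\ell)-\cos\theta_j\sin(\theta_j-\theta_\ell)=\sin\theta_\ell$ and $\sum_{\ell\ne j}\sigma_\ell r_\ell^{-1}\sin\theta_\ell=-\mathrm{Im}\sum_{\ell\ne j}\sigma_\ell/(z-q_\ell)$. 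Your bookkeeping of the discarded terms and of the remainder is fine at the paper's formal level.
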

\begin{proof}
    We compute only the first expansion; the second follows similarly.
    By Lemma \ref{Error} and \eqref{cos}-\eqref{sin}, we obtain
    \begin{equation*}
    \begin{aligned}
&\mathrm{Re}\left[ \overline{\Psi_{\sigma_{j}}\left(z_{j}\right)\left( E_1+i E_2 \right) }\partial _{x}\left(\Psi_{\sigma_{j}}\left(z_{j}\right)\right) \right] \\
=&\mathrm{Re}\left\{ \frac{4\Phi _1\left( r_j \right)}{2-\Phi _1\left( r_j \right)^2}\left[ -\sum_{\ell \ne j}{\frac{\sigma _j}{r_j}\frac{\sigma _{\ell}}{r_{\ell}}\cos \left( \theta _j-\theta _{\ell} \right)} \right. \right. \\
&\quad\left. \left. -i\sum_{\ell \ne j}{\frac{\Phi _{1}'\left( r_j \right)}{\Phi _1\left( r_j \right)}\frac{\sigma _{\ell}}{r_{\ell}}\sin \left( \theta _j-\theta _{\ell} \right)} \right] \cdot \left[ \Phi _{1}'\left( r_j \right) \cos \theta _j-i\frac{\sigma _j}{r_j}\Phi _1\left( r_j \right) \sin \theta _j \right] \right\}\\
&+O\left( \alpha^{-2} (1+r_j)^{-3} \right)\\
=&\frac{-4}{2-\Phi _1\left( r_j \right)^2}\Phi _1\left( r_j \right) \Phi _{1}'\left( r_j \right) \frac{\sigma _j}{r_j}\sum_{\ell \ne j}{\frac{\sigma _{\ell}}{r_{\ell}}}\cos \theta _{\ell}+O\left( \alpha^{-2} (1+r_j)^{-3} \right)\\
=&\frac{-4}{2-\Phi _1\left( r_j \right)^2}\Phi _1\left( r_j \right) \Phi _{1}'\left( r_j \right) \frac{\sigma _j}{r_j}\sum_{\ell \ne j}{\left[ \alpha^{-1} \mathrm{Re}\left(\frac{\sigma _{\ell}}{q_{j,1}-q_{\ell,1}}\right) \right]}+O\left( \alpha^{-2} (1+r_j)^{-3} \right).
    \end{aligned}
\end{equation*}
\end{proof}

To improve the approximate solution near each single vortex, we need to further compute the expansion of the error term with respect to the parameter $\alpha$. By the expansion of the error term in Lemma \ref{Error}, for distinct indices \(k,\ell,j\) and \(z\in \Omega_j\), it suffices to estimate the following two quantities:
$$
A_{j\ell}=\frac{1}{r_\ell}\cos(\theta_j-\theta_\ell)\quad \text{and}\quad B_{j\ell}=\frac{1}{r_\ell}\sin(\theta_j-\theta_\ell).
$$
All the remaining lower-order terms can be expressed in terms of the two formulas above; for example,
$$
\frac{1}{r^2_\ell}=A^2_{j\ell}+B^2_{j\ell},
$$
\begin{equation*}
    \begin{aligned}
        \frac{1}{r_k}\frac{1}{r_\ell}\cos(\theta_k-\theta_\ell)&=A_{jk}A_{j\ell}+B_{jk}B_{j\ell},\\
        \frac{1}{r_k}\frac{1}{r_\ell}\sin(\theta_k-\theta_\ell)&=A_{jk}B_{j\ell}-A_{j\ell}B_{jk}.
    \end{aligned}
\end{equation*}
Denote by 
$$
q_j=q_{j1}+i q_{j2},\quad a_{j\ell}=q_{j1}-q_{\ell1},\quad b_{j\ell}=q_{j2}-q_{\ell2},
$$
and
$$
d_{j\ell}=|q_j-q_\ell|=\sqrt{a_{j\ell}^2+b_{j\ell}^2}\sim \alpha.
$$
Note that
$$
A_{j\ell}=\frac12\frac{x_{\ell}}{r_{\ell}^2}\left(e^{i\theta_{j}}+e^{-i\theta_{j}}\right)-\frac{i}{2}\frac{y_{\ell}}{r_{\ell}^2}\left(e^{i\theta_{j}}-e^{-i\theta_{j}}\right). 
$$
We first consider only the terms of order $\alpha^{-1}$ and $\alpha^{-2}$, and formally restrict to the regime \(\{|z_j|\leq C\}\).
We compute 
\begin{equation*}
    \begin{aligned}
        \frac{x_{\ell}}{r_{\ell}^2}&=\frac{x_{j}+a_{j\ell}}{\left(x_{j}+a_{j\ell}\right)^2+\left(y_{j}+b_{j\ell}\right)^2}\\
        &=\frac{x_{j}+a_{j\ell}}{r_{j}^2+d_{j\ell}^2}\frac{1}{1+\frac{2a_{j\ell}x_{j}+2b_{j\ell}y_{j}}{r_{j}^2+d_{j\ell}^2}}\\ 
        &\sim\frac{x_{j}+a_{j\ell}}{r_{j}^2+d_{j\ell}^2}-\frac{\left(x_{j}+a_{j\ell}\right)\left(2a_{j\ell}x_{j}+2b_{j\ell}y_{j}\right)}{(r_{j}^2+d_{j\ell}^2)^2}.
    \end{aligned}
\end{equation*}
Similarly,
\begin{equation*}
    \frac{y_{\ell}}{r_{\ell}^2}\sim \frac{y_{j}+b_{j\ell}}{r_{j}^2+d_{j\ell}^2}-\frac{\left(y_{j}+b_{j\ell}\right)\left(2a_{j\ell}x_{j}+2b_{j\ell}y_{j}\right)}{(r_{j}^2+d_{j\ell}^2)^2}.
\end{equation*}
Denote by 
\begin{equation}\label{Ajl}
    A_{j\ell}=A^{(1)}_{j\ell}+A^{(2)}_{j\ell}+A^{(3)}_{j\ell}+\cdots,
\end{equation}
where $A^{(k)}_{j\ell}=O(\alpha^{-k})$. Let us
introduce the notations $A^{(k)}_{j\ell,m}$ and $B^{(k)}_{j\ell,m}$, which correspond respectively to the Fourier mode \(m\) parts of \(A^{(k)}_{j\ell}\) and \(B^{(k)}_{j\ell}\).
Then for $k=1,2$, we have 
\begin{equation*}
    \begin{aligned}
        A^{(1)}_{j\ell}&=\frac{1}{2}\frac{ a_{j\ell}}{r^2_j+d^2_{j\ell}}\left(e^{i\theta_{j}}+e^{-i\theta_{j}}\right)-\frac{i}{2}\frac{b_{j\ell}}{r^2_j+d^2_{j\ell}}\left(e^{i\theta_{j}}-e^{-i\theta_{j}}\right)\\
        &=\frac{1}{2}\frac{ a_{j\ell}-i b_{j\ell}}{r^2_j+d^2_{j\ell}}e^{i\theta_j}+\frac{1}{2}\frac{ a_{j\ell}+i b_{j\ell}}{r^2_j+d^2_{j\ell}}e^{-i\theta_j}\\
        &=A^{(1)}_{j\ell,1}e^{i\theta_j}+A^{(1)}_{j\ell,-1}e^{-i\theta_j}.
    \end{aligned}
\end{equation*}
Note that 
$$
\begin{aligned}
    &\left[\frac12\frac{r_{j}}{r_{j}^2+d_{j\ell}^2}-\frac{a_{j\ell}^2r_{j}}{(r_{j}^2+d_{j\ell}^2)^2}\right]-\left[\frac{b_{j\ell}^2r_j}{(r_{j}^2+d_{j\ell}^2)^2}-\frac{1}{2}\frac{r_{j}}{r_{j}^2+d_{j\ell}^2}\right]\\
    &=\frac{r^3_{j}}{(r_{j}^2+d_{j\ell}^2)^2}=A^{(4)}_{j\ell,0}.
\end{aligned}
$$
Hence,
\begin{equation*}
    \begin{aligned}
        A^{(2)}_{j\ell}&=\left[\frac14\frac{r_{j}}{r_{j}^2+d_{j\ell}^2}-\frac12\frac{a_{j\ell}^2r_{j}}{(r_{j}^2+d_{j\ell}^2)^2}\right]\left(e^{i\theta_{j}}+e^{-i\theta_{j}}\right)^2\\
        &+\left[\frac{1}{2}\frac{b_{j\ell}^2r_j}{(r_{j}^2+d_{j\ell}^2)^2}-\frac{1}{4}\frac{r_{j}}{r_{j}^2+d_{j\ell}^2}\right]\left(e^{i\theta_{j}}-e^{-i\theta_{j}}\right)^2\\
        &+i\frac{a_{j\ell}b_{j\ell}r_{j}}{(r_{j}^2+d_{j\ell}^2)^2}\left(e^{2i\theta_{j}}-e^{-2i\theta_{j}}\right)-A^{(4)}_{j\ell,0}\\
        &=-\frac{1}{2}\frac{r_j(a_{j\ell}-i b_{j\ell})^2}{(r_j^2+d_{j\ell}^2)^2}e^{2i\theta_j}-\frac{1}{2}\frac{r_j(a_{j\ell}+i b_{j\ell})^2}{(r_j^2+d_{j\ell}^2)^2}e^{-2i\theta_j}\\
        &=A^{(2)}_{j\ell,2}e^{2i\theta_j}+A^{(2)}_{j\ell,-2}e^{-2i\theta_j}.
    \end{aligned}
\end{equation*}
Similarly, we have 
\begin{equation}\label{Bjl}
    \begin{aligned}
        B_{j\ell}&=-\frac{i}{2}\frac{x_\ell}{r^2_\ell}(e^{i\theta_j}-e^{-i\theta_j})-\frac{1}{2}\frac{y_\ell}{r^2_\ell}(e^{i\theta_j}+e^{-i\theta_j})\\
        &=B^{(1)}_{j\ell}+B^{(2)}_{j\ell}+B^{(3)}_{j\ell}+\cdots,
    \end{aligned}
\end{equation}
where $B^{(k)}_{j\ell}=O(\alpha^{-k})$ and 
\begin{equation*}
    \begin{aligned}
        B^{(1)}_{j\ell}&=-\frac{i}{2}\frac{ a_{j\ell}}{r^2_j+d^2_{j\ell}}(e^{i\theta_j}-e^{-i\theta_j})-\frac{1}{2}\frac{b_{j\ell}}{r^2_j+d^2_{j\ell}}(e^{i\theta_j}+e^{-i\theta_j}),\\
        &=-\frac{i}{2}\frac{ a_{j\ell}-i b_{j\ell}}{r^2_j+d^2_{j\ell}}e^{i\theta_j}+\frac{i}{2}\frac{ a_{j\ell}+i b_{j\ell}}{r^2_j+d^2_{j\ell}}e^{-i\theta_j}\\
        &=B^{(1)}_{j\ell,1}e^{i\theta_j}+B^{(1)}_{j\ell,-1}e^{-i\theta_j},
    \end{aligned}
\end{equation*}
\begin{equation*}
    \begin{aligned}
        B^{(2)}_{j\ell}&=\frac{i}{2}\frac{(a^2_{j\ell}-b^2_{j\ell})r_j}{(r^2_j+d^2_{j\ell})^2}\left(e^{2i\theta_j}-e^{-2i\theta_j}\right)+\frac{a_{j\ell}b_{j\ell}r_j}{(r^2_j+d^2_{j\ell})^2}\left(e^{2i\theta_j}+e^{-2i\theta_j}\right)\\
        &=\frac{i}{2}\frac{r_j(a_{j\ell}-i b_{j\ell})^2}{(r_j^2+d_{j\ell}^2)^2}e^{2i\theta_j}-\frac{i}{2}\frac{r_j(a_{j\ell}+i b_{j\ell})^2}{(r_j^2+d_{j\ell}^2)^2}e^{-2i\theta_j}\\
        &=B^{(2)}_{j\ell,2}e^{2i\theta_j}+B^{(2)}_{j\ell,-2}e^{-2i\theta_j}.
    \end{aligned}
\end{equation*}
Note that the following relations hold:
\begin{equation}\label{AB}
    B^{(1)}_{j\ell,1}=-iA^{(1)}_{j\ell,1},\quad B^{(1)}_{j\ell,-1}=iA^{(1)}_{j\ell,-1},\quad B^{(2)}_{j\ell,2}=-iA^{(2)}_{j\ell,2},\quad B^{(2)}_{j\ell,-2}=iA^{(2)}_{j\ell,-2}.
\end{equation}
For convenience in the subsequent computations, we record the following important identity:
\begin{equation}\label{m0}
        A_{j k}^{(1)} A_{j \ell}^{(1)}+B_{j k}^{(1)} B_{j \ell}^{(1)}=\frac{a_{jk}a_{j\ell}+b_{jk}b_{j\ell}}{(r_j^2+d_{j k}^2)(r_j^2+d_{j \ell}^2)},
\end{equation}
which implies that the left-hand side contains only the Fourier mode $m=0$.

\bigskip

By the error estimate in Lemma \ref{Error} and the vortex configuration conditions, we assume that $|\psi|=O(\alpha^{-2})$.
Then for each fixed $j$ and $z\in\{|z_j|\leq C\}$, there exists a constant $\vartheta_j\in\mathbb{R}$ such that 
$$
\phi=u^*\psi=\Psi_{\sigma_j}(z_j)e^{i\vartheta_j}\psi(z_j+q_j)+O(\alpha^{-3}).
$$
Denote by $\phi_j(z_j)=e^{-i\vartheta_j}\phi(z)$ and $\psi_j(z_j)=\psi(z)$. Here 
\begin{equation}\label{vartheta}
    \vartheta_j=\mathrm{arg}\left(\lim_{z_j\to 0}\frac{u^*(z_j+q_j)}{\Psi_{\sigma_j}(z_j)}\right),
\end{equation}
and \(\vartheta_j\) may be viewed as the phase shift at the \(j\)-th vortex induced by the positions of the other vortices.

We agree that the operators $\mathbb{L}_{\sigma_j}$ and $\mathcal{L}_{\sigma_j}$ are computed in the polar coordinates $(r_j,\theta_j)$ (note that we are now considering only the local region near a single vortex), then we have
\begin{equation*}
    \begin{aligned}
        \mathbb{L}^*\psi\sim\mathbb{L}_{\sigma_j}\psi&=\left(\Phi_1(r_j)\right)^{-1}\mathcal{L}_{\sigma_j}\left(\Phi_1(r_j)\psi_j(z_j)\right)\\
&=\left(\Phi_1(r_j)\right)^{-1}\mathcal{L}_{\sigma_j}\left(e^{-i\sigma_j\theta_j}\phi_j(z_j)\right)+O(\alpha^{-3}).
    \end{aligned}
\end{equation*}
Combining the expansions of \(A_{j\ell}\) and \(B_{j\ell}\) with respect to the parameter \(\alpha\) (see \eqref{Ajl} and \eqref{Bjl}), from now on, for \(E_{st}\) defined in Lemma \ref{Error}, we write 
$$
E^{(k)}_{st}:=O(\alpha^{-k})\text{ terms of } \Phi_1(r_j)E_{st}.
$$
Similarly, we define
$$
E^{(k)}:=O(\alpha^{-k})\text{ terms of } \Phi_1(r_j)(E_1+i E_2).
$$
Therefore, to eliminate the $O(\alpha^{-2})$ terms and improve the approximate solution, at least locally, it suffices to solve the following linearized problem
\begin{equation}\label{linearized-2}
    \mathcal{L}_{\sigma_j}\left(\xi^{(2)}_j(z_j)\right)=E^{(2)}.
\end{equation}
Then we denote the approximate solution obtained after the first improvement by 
\begin{equation}\label{improve}
    u^*_1=u^*+\sum_{j=0}^n e^{i\sigma_j\theta_j}e^{i\vartheta_j}\xi^{(2)}_j(z_j).
\end{equation}
Strictly speaking, some cutoff functions may be needed. Here, however, we only give a heuristic discussion of the Lyapunov-Schmidt reduction procedure.
This improvement of the approximate solution can be iterated indefinitely.

From the perspective of Lyapunov-Schmidt reduction, it suffices to improve only the component orthogonal to the translational kernel; that is, we need only consider the Fourier modes \(m\neq \pm 1\). From the previous computations, $E^{(1)}$ is determined by the individual factors \(A_{j\ell}\) and \(B_{j\ell}\), both of which correspond to the Fourier modes \(m=\pm1\). This is why, in Eq.~\eqref{linearized-2}, we first consider \(k=2\) instead of $k=1$.

We now compute the expansion of \(E^{(2)}\) in detail. Note that 
$$
\frac{\Phi'_1(r_j)}{\Phi_1(r_j)}=\frac{4}{r_j (r^2_j+4)},\quad \frac{\Phi'_1(r_j)^2}{\Phi_1(r_j)^2}-\frac{1}{r^2_j}=-\frac{r^2_j+8}{(r^2_j+4)^2},
$$
and 
\begin{equation*}
    \begin{aligned}
        |u^*|^2&=\Phi_1(r_j)^2\prod_{\ell\ne j}\Phi_1(r_\ell)^2\\
        &=\Phi_1(r_j)^2\prod_{\ell\ne j}\left(1-\frac{4}{r^2_\ell+4}\right)\\
        &=\Phi_1(r_j)^2\left(1-4\sum_{\ell\ne j}r_\ell^{-2}\right)+O(\alpha^{-4}).
    \end{aligned}
\end{equation*}
Therefore, $E^{(2)}_{12}=0$ and 
\begin{equation*}
    \begin{aligned}
        E^{(2)}_{11}&=-\frac{4\Phi_1\left(r_{j}\right)}{2-\Phi_1\left(r_{j}\right)^2}\frac{\sigma_{j}}{r_{j}}\sum_{\ell\ne j}\sigma_{\ell}A_{j\ell}^{\left(2\right)}\\
        &\quad-\frac{2\Phi_1\left(r_{j}\right)}{2-\Phi_1\left(r_{j}\right)^2}\sum_{k,\ell\neq j;k\ne \ell}\sigma_{k}\sigma_{\ell}\left[A_{jk}^{\left(1\right)}A_{j\ell}^{\left(1\right)}+B_{jk}^{\left(1\right)}B_{j\ell}^{\left(1\right)}\right]\\
        &=-\frac{4(r^2_j+4)^{1/2}}{r^2_j+8}\sigma_j\sum_{\ell\ne j}\sigma_{\ell}A_{j\ell}^{\left(2\right)}\\
        &\quad-\frac{2r_j(r^2_j+4)^{1/2}}{r^2_j+8}\sum_{k,\ell\neq j;k\ne \ell}\sigma_{k}\sigma_{\ell}\left[A_{jk}^{\left(1\right)}A_{j\ell}^{\left(1\right)}+B_{jk}^{\left(1\right)}B_{j\ell}^{\left(1\right)}\right],
    \end{aligned}
\end{equation*}
\begin{equation*}
    \begin{aligned}
        E^{(2)}_{13}&=-\Phi_1(r_j)\left[\frac{2}{2-\Phi_1(r_j)^2\left(1-4\sum_{\ell\ne j}r^{-2}_\ell\right)}- \frac{2}{2-\Phi_{1}(r_j)^2}\right]\frac{r^2_j+8}{(r^2_j+4)^2}\\
        &\quad-\Phi_1(r_j)\left(\frac{\Phi _1\left( r_j \right) ^2}{2-\Phi_{1}(r_j)^2}-1\right)\sum_{\ell\ne j}\frac{r^2_\ell+8}{(r^2_\ell+4)^2}+O(\alpha^{-4})\\
        &=\Phi_1(r_j)\left[\frac{8\Phi_1(r_j)^2}{\left(2-\Phi_1(r_j)^2\right)^2}\frac{r^2_j+8}{(r^2_j+4)^2}+\frac{2-2\Phi_1(r_j)^2}{2-\Phi_1(r_j)^2}\right]\sum_{\ell\ne j}r_\ell^{-2}+O(\alpha^{-4})\\
        &=\frac{16 r_j(r_j^2+2)}{(r_j^2+4)^{3 / 2}(r_j^2+8)}\sum_{\ell\ne j}\left[\left(A^{(1)}_{j\ell}\right)^2+\left(B^{(1)}_{j\ell}\right)^2\right],
    \end{aligned}
\end{equation*}
\begin{equation*}
\begin{aligned}
E^{(2)}_{14}
={}&\frac{1}{2}\Phi_1(r_j)
\Bigl[1-\Phi_1(r_j)^2+4\Phi_1(r_j)^2\sum_{\ell\ne j}r_\ell^{-2}\Bigr]
\Bigl[2-\Phi_1(r_j)^2+4\Phi_1(r_j)^2\sum_{\ell\ne j}r_\ell^{-2}\Bigr] \\
&-\frac{1}{2}\Phi_1(r_j)
\bigl(1-\Phi_1(r_j)^2\bigr)
\bigl(2-\Phi_1(r_j)^2\bigr) \\
&-\frac{1}{2}\Phi_1(r_j)
\sum_{\ell\ne j}
\bigl(1-\Phi_1(r_\ell)^2\bigr)
\bigl(2-\Phi_1(r_\ell)^2\bigr)
+O(\alpha^{-4}) \\
={}&
\bigl[-4\Phi_1(r_j)^5+6\Phi_1(r_j)^3-2\Phi_1(r_j)\bigr]
\sum_{\ell\ne j}r_\ell^{-2}
+O(\alpha^{-4}) \\
={}&
\frac{8r_j(r_j^2-4)}{(r_j^2+4)^{5/2}}
\sum_{\ell\ne j}
\left[
\bigl(A^{(1)}_{j\ell}\bigr)^2+
\bigl(B^{(1)}_{j\ell}\bigr)^2
\right].
\end{aligned}
\end{equation*}
and
\begin{equation*}
        \begin{aligned}
            E^{(2)}_2&=E^{(2)}_{21}+E^{(2)}_{22}\\
            &=\frac{4\Phi'_1(r_j)}{2-\Phi_1(r_j)^2}\sum_{\ell\ne j}\sigma_\ell B^{(2)}_{j\ell}\\
            &=\frac{16}{(r^2_j+4)^{1/2}(r^2_j+8)}\sum_{\ell\ne j}\sigma_\ell B^{(2)}_{j\ell}.
        \end{aligned}
\end{equation*}
For each pair of indices $s,t$, we consider the Fourier decomposition
\[
E^{(2)}_{st}
=
\sum_{m\in\mathbb Z}
E^{(2)}_{st,m}e^{im\theta_j}.
\]
For fixed $j$, we define 
$$
\mathcal{E}^{(2)}_{j,m}:=\sum_{s,t}E^{(2)}_{st,m}.
$$
Combining the Fourier expansions of
$A^{(k)}_{j\ell}$, $B^{(k)}_{j\ell}$, and $E^{(2)}_{st}$,
we obtain
\[
E^{(2)}_1
=
\sum_{t\in\{1,3,4\}}
\sum_{m\in\{0,\pm2\}}
E^{(2)}_{1t,m}e^{im\theta_j},
\qquad
E^{(2)}_2
=
\sum_{m=\pm2}
E^{(2)}_{2,m}e^{im\theta_j}.
\]

\bigskip

We now return to the \((1,3)\)-vortex configuration considered before, namely, \(q_j=p_j\) for $j=0,1,2,3$.
In this case, by definition \eqref{vartheta} and a direct computation, we obtain
$$
\vartheta_0=\frac{3\pi}{4},\quad \vartheta_1=-\frac{3\pi}{4},\quad \vartheta_2=-\frac{\pi}{12},\quad \vartheta_3=\frac{7\pi}{12}.
$$
Now for $j=0$, let us consider the asymptotic expansion of the difference between the exact solution $\Psi_{2,\alpha}$ (given by Theorem \ref{thm}) and approximate solution $u^*$ as \(\alpha\to\infty\):
$$
\Psi_{2,\alpha}(z)-u^*(z)=c_{02}(x,y)\alpha^{-2}+c_{03}(x,y)\alpha^{-3}+\cdots,
$$
where 
\begin{equation*}
    c_{02}(x, y)=\frac{3[-x+y+i(x+y)]}{\sqrt{2\left(4+x^2+y^2\right)}}=3 e^{3 \pi i / 4} \frac{\bar{z}}{\sqrt{r^2+4}},\quad z=x+iy=r e^{i\theta}.
\end{equation*}
By \eqref{improve}, we denote by
\begin{equation}\label{xi2}
    \xi^{(2)}_0(z):=c_{02}(x,y)e^{-i\sigma_0 \theta}e^{-i\vartheta_0}\alpha^{-2}=\frac{3r}{\sqrt{r^2+4}}=3\Phi_1(r)\alpha^{-2}.
\end{equation}
Observe that it contains only the Fourier mode $m=0$, and hence
\begin{equation}\label{xi}
    \xi^{(2)}_0(z)=\sum_{m\in\mathbb{Z}}\xi^{(2)}_{0,m}(r)e^{im\theta}=\xi^{(2)}_{0,0}(r).
\end{equation}
Next, we will verify that 
$$
\mathcal{L}_{-1}(\xi^{(2)}_0)\sim E^{(2)}_{1,0}=E^{(2)}_{11,0}+E^{(2)}_{13,0}+E^{(2)}_{14,0}.
$$
Note that 
$$
\frac{16 r(r^2+2)}{(r^2+4)^{3 / 2}(r^2+8)}+\frac{8r(r^2-4)}{(r^2+4)^{5/2}}=\frac{8 r\left(3 r^4+16 r^2-16\right)}{\left(r^2+4\right)^{5 / 2}\left(r^2+8\right)}.
$$
Hence, 
\begin{equation*}
    \begin{aligned}
        E^{(2)}_{1,0}&=-\frac{2r(r^2+4)^{1/2}}{r^2+8}\sum_{k,\ell\neq 0;k\ne \ell}\sigma_{k}\sigma_{\ell}\left(\frac{a_{0k}}{r^2+d^2_{0k}}\frac{ a_{0\ell}}{r^2+d^2_{0\ell}}+\frac{b_{0k}}{r^2+d^2_{0k}}\frac{b_{0\ell}}{r^2+d^2_{0\ell}}\right)\\
        &\quad+\frac{8 r\left(3 r^4+16 r^2-16\right)}{\left(r^2+4\right)^{5 / 2}\left(r^2+8\right)}\sum_{\ell\neq 0}\frac{d^2_{0\ell}}{(r^2+d^2_{0\ell})^2}.
    \end{aligned}
\end{equation*}
We define the $O(\alpha^{-2})$ approximation of $E^{(2)}_{1,0}$ near the vortex $p_0$ by 
\begin{equation}\label{F}
    \begin{aligned}
        F^{(2)}_{1,0}&=-\frac{2r(r^2+4)^{1/2}}{r^2+8}\sum_{k,\ell\neq 0;k\ne \ell}\sigma_{k}\sigma_{\ell}\left(\frac{a_{0k}}{d^2_{0k}}\frac{ a_{0\ell}}{d^2_{0\ell}}+\frac{b_{0k}}{d^2_{0k}}\frac{b_{0\ell}}{d^2_{0\ell}}\right)\\
        &\quad+\frac{8 r\left(3 r^4+16 r^2-16\right)}{\left(r^2+4\right)^{5 / 2}\left(r^2+8\right)}\sum_{\ell\neq 0}\frac{1}{d^{2}_{0\ell}}.
    \end{aligned}
\end{equation}
For different indices $k,\ell\neq 0$, it follows from \eqref{pj} and the definition that
$$
d_{0 k}^2=2 \alpha^2, \quad a_{0 k} a_{0 \ell}+b_{0 k} b_{0 \ell}=p_k \cdot p_{\ell}=-\alpha^2.
$$
Hence, 
$$
\sum_{\substack{k, \ell \neq 0;k \neq \ell}} \sigma_k \sigma_{\ell} \frac{a_{0 k} a_{0 \ell}+b_{0 k} b_{0 \ell}}{d_{0 k}^2 d_{0 \ell}^2}=-\frac{3}{2 \alpha^2},
$$
and
$$
\sum_{\ell \neq 0} \frac{1}{d^2_{0 \ell}}=\frac{3}{2 \alpha^2}.
$$
Substituting them into \eqref{F}, we obtain 
$$
F^{(2)}_{1,0}=\frac{3 r^3\left(r^4+24 r^2+112\right)}{\left(r^2+4\right)^{5 / 2}\left(r^2+8\right)}\alpha^{-2}.
$$
By the definition of the operator $\mathcal{L}_n$ (see \eqref{Ln}), a direct computation using numerical software gives
\begin{equation}\label{3phi}
    \mathcal{L}_{-1}(3\Phi_1(r))=\alpha^{2}F^{(2)}_{1,0}.
\end{equation}
Therefore, the choice of the improvement term $\xi^{(2)}_{0,0}$ (see \eqref{xi2}-\eqref{xi}) is justified within the Lyapunov--Schmidt reduction framework by \eqref{3phi}. To see that the Fourier mode $m=\pm 2$ components of the first improvement term $\xi^{(2)}_{0,m}$ near $p_0$ vanish, it suffices to check whether the Fourier mode $m=\pm 2$ components of the $O(\alpha^{-2})$ terms of the error also vanish near $p_0$. 
By \eqref{AB}, \eqref{m0} and definition, we compute 
\begin{equation*}
    \begin{aligned}
        \mathcal{E}^{(2)}_{j,2}&=E^{(2)}_{11,2}+iE^{(2)}_{2,2}\\
        &=\left[-\frac{4 (r_j^2+4)^{1/2}}{r_j^2+8} \sigma_j+\frac{16}{(r_j^2+4)^{1/2}(r^2_j+8)}\right] \sum_{\ell \neq j} \sigma_{\ell} A_{j \ell, 2}^{(2)}\\
        &=-\frac{2 r_j\left[4-\sigma_j(r_j^2+4)\right]}{(r_j^2+4)^{1/2}(r_j^2+8)} \sum_{\ell \neq j} \sigma_{\ell} \frac{\left(a_{j \ell}-i b_{j \ell}\right)^2}{(r_j^2+d_{j \ell}^2)^2}.
    \end{aligned}
\end{equation*}
Similarly, we have 
\begin{equation*}
    \begin{aligned}
        \mathcal{E}^{(2)}_{j,-2}&=E^{(2)}_{11,-2}+iE^{(2)}_{2,-2}\\
        &=\frac{2 r_j\left[4+\sigma_j(r_j^2+4)\right]}{(r_j^2+4)^{1/2}(r_j^2+8)} \sum_{\ell \neq j} \sigma_{\ell} \frac{\left(a_{j \ell}+i b_{j \ell}\right)^2}{(r_j^2+d_{j \ell}^2)^2}.
    \end{aligned}
\end{equation*}
Substituting the vortex data of the $(1,3)$-configuration, for $j=0$, a direct computation shows that
$$
\sum_{\ell=1}^3 \frac{\left(a_{0 \ell}\pm i b_{0 \ell}\right)^2}{(r^2+d_{0 \ell}^2)^2}=0,
$$
which implies that $\mathcal{E}^{(2)}_{0,\pm 2}=0$ and hence $\xi^{(2)}_{0,\pm 2}=0$.

After a change of coordinates, the above discussion of the improvement of the approximate solution within the Lyapunov-Schmidt reduction framework can also be carried out at the remaining vortex points. For comparison, it suffices to consider the formal expansion of 
$$
\Psi_{2,\alpha}(z_j+p_j)-u^*(z_j+p_j)=c_{j2}(x_j,y_j)\alpha^{-2}+c_{j3}(x_j,y_j)\alpha^{-3}+\cdots, \quad \text{as } \alpha \to\infty,
$$
where $z_j=z-p_j=r_j e^{i\theta_j}=x_j+i y_j$. The subsequent local approximate liftings near the vortices are (formally) similar to the case $j=0$, and we omit the details here. 

This analysis already indicates that a Lyapunov-Schmidt reduction for this equation is quite delicate, especially regarding the decay behavior of the perturbation term away from the vortex points (for instance, $c_{j3}$ itself grows in $x,y$). An interesting feature is that, at orders $\alpha^{-2}$ and $\alpha^{-3}$, the Ginzburg-Landau equation and the complex sine-Gordon II equation are quite similar. However, in the higher-order perturbation terms, one should be able to see their differences, which reflect precisely why the complex sine-Gordon II equation is integrable while the Ginzburg-Landau equation is not.

\bigskip

	\begin{center}
		{\bf Acknowledgments}
	\end{center}
	
The research of S. Chen was supported by National Key R\&D Program program of China 2022YFA1005400, National Science Fund for Distinguished Young Scholars (No.12225111), NSFC
No.12426202.	The research of C.F. Gui is supported by University of Macau research grants CPG2024-00016-FST, CPG202500032-FST, SRG2023-00011-FST, MYRG-GRG2023-00139-FST-UMDF, UMDF Professorial Fellowship of Mathematics, Macao SAR FDCT0003/2023/RIA1 and Macao SAR FDCT0024/2023/RIB1, NSFC No. 12531010. The research of Y. Liu is supported by  NSFC No. 12471204.  The research of W. Yang is partially supported by National Key R\&D Program of China 2022YFA1006800, NSFC No.12171456, 12271369 and 12531010, FDCT No. 0070/2024/RIA1, Start-up Research Grant No. SRG2023-00067-FST, Multi-Year Research Grant No. MYRG-GRG2024-00082-FST-UMDF, MYRG-GRG2025-00051-FST and UMDF No. TISF/2025/006/FST.

\bibliographystyle{amsplain}

\end{document}